\documentclass[11 pt]{amsart}
%\subjclass[2000]{Primary 14E30}

\usepackage{amsfonts}
\usepackage{amssymb}
\usepackage{amsmath}
\usepackage{latexsym}
\usepackage{mathrsfs}
\usepackage{amsthm}
\usepackage{verbatim}

\newtheorem{thrm}{Theorem}[section]
\newtheorem{lem}[thrm]{Lemma}
\newtheorem{cor}[thrm]{Corollary}
\newtheorem{prop}[thrm]{Proposition}

\theoremstyle{definition}
\newtheorem{defn}[thrm]{Definition}
\newtheorem{exmple}[thrm]{Example}
\newtheorem{rmk}[thrm]{Remark}
\newtheorem{conj}[thrm]{Conjecture}
\newtheorem{ques}[thrm]{Question}

\newtheorem{constr}[thrm]{Construction}
\newtheorem{conv}[thrm]{Convention}

\begin{document}

\newcommand{\vol}{\mathrm{vol}}

\newcommand{\ord}{\mathrm{ord}}

\newcommand{\Supp}{\mathrm{Supp}}

\newcommand{\Sing}{\mathrm{Sing}}

\newcommand{\Val}{\mathrm{Val}}

\newcommand{\cent}{\mathrm{cent}}

\newcommand{\lct}{\mathrm{lct}}

\title{Algebraic bounds on analytic multiplier ideals}
\author{Brian Lehmann}
\thanks{The author is supported by NSF Award 1004363.}
\address{Department of Mathematics, Rice University \\
Houston, TX \, \, 77005}
\email{blehmann@rice.edu}

\begin{abstract}
Given a pseudo-effective divisor $L$ we construct the diminished ideal $\mathcal{J}_{\sigma}(L)$, a ``continuous'' extension of the asymptotic multiplier ideal for big divisors to the pseudo-effective boundary.  Our main theorem shows that for most pseudo-effective divisors $L$ the multiplier ideal $\mathcal{J}(h_{min})$ of the metric of minimal singularities on $\mathcal{O}_{X}(L)$ is contained in $\mathcal{J}_{\sigma}(L)$.  We also characterize abundant divisors using the diminished ideal, indicating that the geometric and analytic information should coincide.
%
%$ $
%
%Limites alg\'ebriques sur les id\'eaux multiplicateurs analytiques
%
%$ $
%
%Pour un diviseur pseudo-effectif $L$ nous construisons l'id\'eal diminu\'e $\mathcal{J}_{\sigma}(L)$ qui est une extension ``continue'' de l'id\'eal multiplicateur asymptotique pour les diviseurs grands au c\^one pseudo-effectif.  L'id\'eal multiplicateur d'une m\'etrique hermitiennes \`a  singularit\'es minimales sur $\mathcal{O}_{X}(L)$ est souvent contenue dans $\mathcal{J}_{\sigma}(L)$.   Nous caract\'erisons les diviseurs abondant par l'id\'eal diminu\'e, montrant que l'information g\'eom\'etrique et analytique doivent co\"incider.
\end{abstract}

\maketitle

\section{Introduction}

Suppose that $X$ is a smooth projective complex variety and that $\mathcal{L}$ is a holomorphic line bundle on $X$.  A classical theorem of Kodaira states that if $\mathcal{L}$ carries a smooth hermitian metric with positive curvature then $\mathcal{L}$ is ample.  More generally, the positivity of singular hermitian metrics carried by $\mathcal{L}$ should be related to its ``algebraic positivity'', i.e.~the existence of sections of tensor powers of $\mathcal{L}$.  Among all positive singular metrics carried by a line bundle $\mathcal{L}$, there is a class of metrics, denoted $h_{min}$, which have the ``minimal'' possible singularities and the closest ties to the geometry of $\mathcal{L}$.

The main tool for relating metrics to geometric properties is the multiplier ideal.  These ideals have played a key role in recent work in birational geometry and have thus become important objects of study in their own right.  Our goal is to give bounds on the behavior of the multiplier ideal $\mathcal{J}(h_{min})$ using algebraically defined multiplier ideals.  This question seems to have first appeared in \cite{del00} where it is conjectured that when $\mathcal{L}$ is a big line bundle $\mathcal{J}(h_{min})$ coincides with the asymptotic multiplier ideal.  We focus on the case when $\mathcal{L}$ is pseudo-effective but not big.  Since the vanishing theory for big divisors is well-established, this case is more interesting for geometric applications.  However, the situation is much more subtle; even for surfaces there is no conjectural algebraic description of $\mathcal{J}(h_{min})$.  Since one can find lower bounds for $\mathcal{J}(h_{min})$ using sections of $\mathcal{L}$, the main point is to construct an interesting upper bound.

We introduce the diminished ideal $\mathcal{J}_{\sigma}(\mathcal{L})$ in Section \ref{diminishedsection}.  This ideal can be thought of as a ``continuous'' extension of the asymptotic multiplier ideal on the big cone to the pseudo-effective boundary.  In contrast to previous constructions for pseudo-effective divisors, the diminished ideal retains many of the desirable properties of the asymptotic multiplier ideal for big divisors.  It plays an important role in the numerical theory of divisors; in particular, Theorem \ref{abundantequality} shows that diminished ideals can be used to detect whether a divisor is abundant, which is a key problem in the minimal model program.

The natural analogue of the conjecture of \cite{del00} is:

\begin{conj} \label{psefequality}
Let $X$ be a smooth variety and let $L$ be a pseudo-effective $\mathbb{R}$-divisor on $X$.  Then
\begin{equation*}
\mathcal{J}(T_{min}) \subset \mathcal{J}_{\sigma}(L)
\end{equation*}
where $T_{min}$ is a current of minimal singularities in the numerical class of $L$.
\end{conj}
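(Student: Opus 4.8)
The plan is to reduce, by perturbing with an ample divisor, to the corresponding comparison for \emph{big} divisors, and then to treat the remaining divisors through the structure theory of abundant divisors. Fix an ample $A$. For each $\epsilon>0$ the class $L+\epsilon A$ is big; let $T_\epsilon$ be a current of minimal singularities in it. Since $T_{min}+\epsilon\omega_A$ is a closed positive current in the class of $L+\epsilon A$ (where $\omega_A$ is a smooth form in $c_1(A)$), and adding the smooth form $\epsilon\omega_A$ changes neither Lelong numbers nor multiplier ideals, $T_\epsilon$ is no more singular than $T_{min}$, so $\mathcal{J}(T_{min})\subseteq\mathcal{J}(T_\epsilon)$ for every $\epsilon$. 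On the other hand, by the defining properties of the diminished ideal (Section~\ref{diminishedsection}) one has $\mathcal{J}_{\sigma}(L)=\mathcal{J}(\|L+\epsilon A\|)$ once $\epsilon$ is small, independently of $A$. Hence it is enough to establish, for one small $\epsilon$, the big-divisor containment $\mathcal{J}(T_\epsilon)\subseteq\mathcal{J}(\|L+\epsilon A\|)$, which is exactly the conjecture of \cite{del00} for the big $\mathbb{R}$-divisor $L+\epsilon A$. In particular Conjecture~\ref{psefequality} holds whenever some ample perturbation of $L$ satisfies that conjecture; for instance it holds on surfaces, where the big case can be handled directly via the Zariski decomposition.

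For the unconditional statement I would induct on $\dim X$, first peeling off the $\sigma$-fixed part. Writing $N=N_\sigma(L)$, Nakayama's identity $\nu(T_{min},\Gamma)=\sigma_\Gamma(L)$ for prime divisors $\Gamma$ shows that the divisorial part of $T_{min}$ is precisely $[N]$, which contributes the same factor to $\mathcal{J}(T_{min})$ and to $\mathcal{J}_{\sigma}(L)$; this reduces the problem to the positive part $P=P_\sigma(L)$, which carries no fixed divisors. When $L$ (equivalently $P$) is abundant, the numerical Iitaka fibration presents $f^*P$, on a suitable birational model $f\colon Y\to X$, as a pullback $\pi^*B$ of a big divisor $B$ on a lower-dimensional base, up to a numerically trivial error, so that a current of minimal singularities for $f^*P$ is the $\pi$-pullback of one for $B$. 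Comparing multiplier ideals via the birational transformation rule and subadditivity, the statement for $L$ follows from the lower-dimensional comparison for $B$; curves and surfaces, where the Zariski decomposition reduces everything to the behaviour of nef classes, serve as the base of the induction. This comparison is the analytic counterpart of Theorem~\ref{abundantequality}. For non-abundant $L$ one must additionally control the positive part $P_\sigma(L)$, whose numerical dimension measures the failure of abundance, and the divisors for which this control can be carried out make up the ``most'' of the main theorem.

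The principal obstacle is the higher-codimension behaviour of $T_{min}$: a current of minimal singularities may concentrate along subvarieties of codimension $\geq 2$ with no divisorial --- hence no evident algebraic --- source, so a priori $\mathcal{J}(T_{min})$ can be strictly smaller than any ideal built from the $\sigma$-invariants of $L$. By the reduction above, excluding this phenomenon is equivalent to proving the big-divisor conjecture for the perturbations $L+\epsilon A$ uniformly in $\epsilon$. Demailly's regularization theorem does produce currents $T_m$ with analytic singularities satisfying $\mathcal{J}(T_{min})\subseteq\mathcal{J}((1-\delta_m)T_m)$ with $\delta_m\to 0$; the step that fails in general is showing that the ideals of these $T_m$ are bounded by $\mathcal{J}_{\sigma}(L)$, that is, that the regularizing singularities become algebraic in the limit. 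This is precisely the point at which the argument must be confined to ``most'' $L$ rather than all.
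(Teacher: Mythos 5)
The statement you are addressing is stated in the paper as a conjecture: the paper does not prove it, and your sketch does not close the gap either. The decisive error is in your first reduction, where you assert that $\mathcal{J}_{\sigma}(L)=\mathcal{J}(\Vert L+\epsilon A\Vert)$ once $\epsilon$ is small. The ideal at which the descending chain $\mathcal{J}(\Vert L+\epsilon A\Vert)$ stabilizes is the \emph{perturbed} ideal $\mathcal{J}_{-}(L)$ of Definition \ref{perturbedidealdefn}, not the diminished ideal, and the containment $\mathcal{J}_{\sigma}(L)\subset\mathcal{J}_{-}(L)$ is strict in general --- already for big $L$, as in Example \ref{perturbedexample}, where $\mathcal{J}_{\sigma}(\pi^{*}H+E)=\mathcal{J}(\Vert \pi^{*}H+E\Vert)=\mathcal{O}_{S}(-E)$ while $\mathcal{J}_{-}(\pi^{*}H+E)=\mathcal{O}_{S}$. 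Passing from $\mathcal{J}_{-}(L)$ to $\mathcal{J}_{\sigma}(L)$ requires the additional rescaling $L\mapsto(1+\delta)L$ before perturbing, and transporting that rescaling to the analytic side --- i.e.\ showing $\mathcal{J}(T_{min})=\mathcal{J}((1+\delta)T_{min})$ for small $\delta>0$ --- is precisely the Openness Conjecture of \cite{dk01}, as the introduction explains after Theorem \ref{mostlypsefcontainment}. So even if one grants the (still open) big-divisor conjecture of \cite{del00} for every $L+\epsilon A$, your first paragraph yields only $\mathcal{J}(T_{min})\subset\mathcal{J}_{-}(L)$, which is Theorem \ref{idealcomparison}, not Conjecture \ref{psefequality}.

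Your second reduction rests on another identity that fails off the big cone: $\nu(T_{min},\Gamma)=\sigma_{\Gamma}(L)$ is Boucksom's theorem for \emph{big} $L$ (Proposition \ref{lelongbigequality}), but for a pseudo-effective $L$ only the inequality $\nu(T_{min},\Gamma)\geq\sigma_{\Gamma}(L)$ is available (Corollary \ref{lelongcor}), and equality genuinely fails: in Example \ref{dps01exmple} the divisor $L$ is nef, so every $\sigma_{\Gamma}(L)$ vanishes, yet $T_{min}$ is singular along the zero section. Hence the claim that ``the divisorial part of $T_{min}$ is precisely $[N_{\sigma}(L)]$'' is not legitimate, and the induction that peels off $N_{\sigma}(L)$ and descends along the Iitaka fibration does not get started; the inequality does go in the direction favorable to the conjectured containment, but your argument needs the equality in order to cancel matching factors on both sides. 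The paper's actual partial results sidestep both issues by working valuatively: $\mathcal{J}_{-}$ is characterized by divisorial valuations (Proposition \ref{perturbedvaluation}), the one-sided Lelong bound then gives Theorem \ref{idealcomparison}, and the conjecture follows only in those cases where $\mathcal{J}_{\sigma}$ is itself detected by divisorial valuations, namely for weakly balanced scalings and for divisors admitting a Zariski decomposition. Your closing paragraph in effect concedes this, so what you have, after correcting the two identities above to inequalities, is a sketch of Theorems \ref{idealcomparison} and \ref{mostlypsefcontainment} rather than a proof of the conjecture.
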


In order to give the most precise statement, we have passed from line bundles $\mathcal{L}$ to $\mathbb{R}$-divisors $L$ and from metrics $h_{min}$ to currents $T_{min}$.
In contrast to the case when $L$ is big, there are examples where the containment in Conjecture \ref{psefequality} is strict.

Our main theorem shows that Conjecture \ref{psefequality} holds for ``most'' pseudo-effective divisors $L$.  This generalizes \cite{del00} Theorem 1.11, which shows that $\mathcal{J}(h_{min}) = \mathcal{J}(\Vert L \Vert)$ for ``most'' big divisors.

\begin{thrm} \label{mostlypsefcontainment}
Let $X$ be a smooth variety and let $L$ be a pseudo-effective $\mathbb{R}$-divisor on $X$.
\begin{enumerate}
\item There is an open dense subset $U$ of $\mathbb{R}_{>0}$ such that for every $\alpha \in U$ we have
\begin{equation*}
\mathcal{J}(T_{min, \alpha L}) \subset \mathcal{J}_{\sigma}(\alpha L)
\end{equation*}
where $T_{min, \alpha L}$ is a current of minimal singularities in the numerical class of $\alpha L$.
\item Suppose that $L$ has a Zariski decomposition (that is, there is some birational map $\phi: Y \to X$ from a smooth variety $Y$ such that $P_{\sigma}(\phi^{*}L)$ is nef).  Then
\begin{equation*}
\mathcal{J}(T_{min}) \subset \mathcal{J}_{\sigma}(L)
\end{equation*}
where $T_{min}$ is a current of minimal singularities in the numerical class of $L$.
\end{enumerate}
\end{thrm}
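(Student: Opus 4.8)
The plan is to reduce everything, as far as possible, to the known statement for big divisors, \cite{del00} Theorem 1.11, by perturbing $L$ with a small ample class; indeed, if \cite{del00} Theorem 1.11 held for every big divisor, the perturbation argument below would yield Conjecture~\ref{psefequality} for every pseudo-effective $L$, so the restriction to ``most $\alpha$'' in (1) and to classes with a Zariski decomposition in (2) reflects exactly the limitations of that theorem. Note that the easy containment here is the reverse one: lower bounds for $\mathcal{J}(T_{min})$ come for free from sections, and the whole problem is to bound $\mathcal{J}(T_{min})$ from above. Throughout I use the following facts about the diminished ideal, which should come out of its construction in Section~\ref{diminishedsection}: for an ample class $A$ the ideals $\mathcal{J}(\Vert L+\epsilon A\Vert)$ are monotone in $\epsilon>0$ and converge as $\epsilon\to 0^{+}$ to $\mathcal{J}_{\sigma}(L)$, independently of $A$; $\mathcal{J}_{\sigma}$ satisfies the same birational transformation rule as the asymptotic multiplier ideal; and if $P_{\sigma}(D)$ is nef and $N_{\sigma}(D)$ has simple normal crossing support then $\mathcal{J}_{\sigma}(D)=\mathcal{O}(-\lfloor N_{\sigma}(D)\rfloor)$. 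I also use that the weight of a current of minimal singularities is homogeneous in the class, that $\mathcal{J}(T_{min,D})\subseteq\mathcal{J}(T_{min,D+A})$ for $A$ ample (since $T_{min,D}+\omega_{A}$, with $\omega_{A}$ a Kähler form representing $A$, is a positive current in the class of $D+A$, and $T_{min,D+A}$ is the least singular such), and the strong openness theorem of Guan--Zhou in the form $\mathcal{J}(\varphi)=\bigcup_{\beta>1}\mathcal{J}(\beta\varphi)$.

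For (1), fix an ample $A$. For every $\alpha>0$ and $\epsilon>0$ the class $\alpha L+\epsilon A$ is big and $\mathcal{J}(T_{min,\alpha L})\subseteq\mathcal{J}(T_{min,\alpha L+\epsilon A})$. Applying \cite{del00} Theorem 1.11 (or its proof) to the big classes $\alpha L+\epsilon A$ gives $\mathcal{J}(T_{min,\alpha L+\epsilon A})=\mathcal{J}(\Vert\alpha L+\epsilon A\Vert)$ whenever $\alpha L+\epsilon A$ is one of the ``good'' big classes covered by that theorem. One then shows --- this is the delicate point --- that, after intersecting the good locus with the relevant two-parameter family $\{\alpha L+\epsilon A\}$, the set of $\alpha$ for which $\alpha L+\epsilon A$ fails to be good for a sequence $\epsilon\to 0^{+}$ is contained in a nowhere dense (indeed discrete) subset of $\mathbb{R}_{>0}$; let $U$ be its complement, an open dense set. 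For $\alpha\in U$ we then have $\mathcal{J}(T_{min,\alpha L})\subseteq\mathcal{J}(\Vert\alpha L+\epsilon A\Vert)$ for $\epsilon$ running through a sequence tending to $0$, and by the monotonicity of $\mathcal{J}(\Vert\alpha L+\epsilon A\Vert)$ the intersection of these ideals is $\mathcal{J}_{\sigma}(\alpha L)$; hence $\mathcal{J}(T_{min,\alpha L})\subseteq\mathcal{J}_{\sigma}(\alpha L)$.

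For (2), let $\phi\colon Y\to X$ be birational with $P:=P_{\sigma}(\phi^{*}L)$ nef, and, composing with a log resolution, arrange that $N:=N_{\sigma}(\phi^{*}L)$ has simple normal crossing support with $P$ still nef. Since $\mathcal{J}(T_{min,L})$ is the pushforward (twisted by $K_{Y/X}$) of $\mathcal{J}(\phi^{*}T_{min,L})\subseteq\mathcal{J}(T_{min,\phi^{*}L})$, while the corresponding pushforward of $\mathcal{J}_{\sigma}(\phi^{*}L)$ is $\mathcal{J}_{\sigma}(L)$, it suffices to prove $\mathcal{J}(T_{min,\phi^{*}L})\subseteq\mathcal{J}_{\sigma}(\phi^{*}L)$ on $Y$. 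By homogeneity of currents of minimal singularities and strong openness, $\mathcal{J}(T_{min,\phi^{*}L})=\bigcup_{\beta>1}\mathcal{J}(T_{min,\beta\phi^{*}L})$, an increasing union, so we may restrict the union to $\beta>1$ lying in the dense set $U$ furnished by (1) for $\phi^{*}L$; for such $\beta$, part (1) gives $\mathcal{J}(T_{min,\beta\phi^{*}L})\subseteq\mathcal{J}_{\sigma}(\beta\phi^{*}L)$. Because $\beta\phi^{*}L=\beta P+\beta N$ with $\beta P$ nef, we have $\mathcal{J}_{\sigma}(\beta\phi^{*}L)=\mathcal{O}_{Y}(-\lfloor\beta N\rfloor)$, and since $\lfloor\beta N\rfloor=\lfloor N\rfloor$ for all $\beta$ in a right neighbourhood of $1$, the union over $\beta>1$ is exactly $\mathcal{O}_{Y}(-\lfloor N\rfloor)=\mathcal{J}_{\sigma}(\phi^{*}L)$. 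This gives the inclusion on $Y$, and pushing forward finishes (2).

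The main obstacle is the genericity in (1). Because $T_{min,\alpha L}$ has no algebraic description on the pseudo-effective boundary, the argument must pass through the big classes $\alpha L+\epsilon A$ and invoke \cite{del00} Theorem 1.11 there; controlling the locus where that theorem applies, and showing that the resulting set of bad $\alpha$ is not merely meager but nowhere dense with open dense complement, requires knowing that the functions $\alpha\mapsto\mathcal{J}_{\sigma}(\alpha L)$ and $\alpha\mapsto\mathcal{J}(T_{min,\alpha L})$ have only locally finitely many jumps --- this is precisely why (1) is an ``open dense'' rather than an ``all $\alpha$'' statement. The other load-bearing ingredient is the good behaviour of $\mathcal{J}_{\sigma}$ under ample perturbation and under the $\sigma$-decomposition, which is where the construction of Section~\ref{diminishedsection} does its work; in (2) it is exactly the presence of a Zariski decomposition that makes $\mathcal{J}_{\sigma}(\beta\phi^{*}L)$ computable and right-continuous at $\beta=1$, a right-continuity that can fail for a general pseudo-effective class, so that the limiting argument over $\beta$ breaks down without that hypothesis.
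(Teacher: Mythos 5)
Your argument for part (1) rests on a false description of the diminished ideal. You assert that $\mathcal{J}(\Vert L+\epsilon A\Vert)$ converges as $\epsilon\to 0^{+}$ to $\mathcal{J}_{\sigma}(L)$. It does not: by Definition \ref{perturbedidealdefn} that stabilized limit is the perturbed ideal $\mathcal{J}_{-}(L)$, whereas $\mathcal{J}_{\sigma}(L)$ is the further upper-regularization $\max_{\epsilon>0}\mathcal{J}_{-}((1+\epsilon)L)$, and the containment $\mathcal{J}_{\sigma}(L)\subset\mathcal{J}_{-}(L)$ is strict in general (Example \ref{perturbedexample}: $L=\pi^{*}H+E$ big has $\mathcal{J}_{\sigma}(L)=\mathcal{J}(\Vert L\Vert)=\mathcal{O}_{S}(-E)\subsetneq\mathcal{O}_{S}=\mathcal{J}_{-}(L)$). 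Consequently your limiting argument, even granting every intermediate step, only yields $\mathcal{J}(T_{min,\alpha L})\subset\mathcal{J}_{-}(\alpha L)$ --- which is Theorem \ref{idealcomparison}, true for \emph{all} $\alpha$ with no genericity needed --- and not the asserted containment in $\mathcal{J}_{\sigma}(\alpha L)$. You have also misplaced the source of the genericity: the open dense set $U$ in the paper is not where some version of \cite{del00} Theorem 1.11 applies to $\alpha L+\epsilon A$, but the set of $\alpha$ for which $\alpha L$ is \emph{weakly balanced}, i.e.\ $\mathcal{J}_{\sigma}(\alpha L)=\mathcal{J}_{-}(\alpha L)$ (Proposition \ref{perturbedstability}(2)); on that set the containment follows immediately from Theorem \ref{idealcomparison}. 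Your ``delicate point'' --- that the bad set of $\alpha$ along the two-parameter family is nowhere dense --- is left entirely unproved, and the paper deliberately avoids this Ohsawa--Takegoshi route (Remark \ref{otremark}), proving Theorem \ref{idealcomparison} instead via Boucksom's identity $\nu(T_{min},D)=v_{D}(\Vert L\Vert)$ for big classes, the resulting bound $\nu(\phi^{*}T_{min},D)\geq\sigma_{D}(L)$, and the fact that $\mathcal{J}_{-}$ is detected by divisorial valuations (Proposition \ref{perturbedvaluation}).

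For part (2) you invoke the Guan--Zhou strong openness theorem. The paper treats openness as an open conjecture and points out in the introduction that granting it yields all of Conjecture \ref{psefequality} directly from part (1), making the Zariski decomposition hypothesis (and hence part (2)) superfluous; so your route both exceeds the paper's toolkit and, in any case, inherits the gap in your part (1) on which it relies. The paper's actual proof of (2) needs no openness: on a log resolution $Y$ where $P_{\sigma}(\phi^{*}L)$ is nef and $N_{\sigma}(\phi^{*}L)$ has simple normal crossing support, the Lelong-number bound above together with Lemma \ref{codimensiononecalculation} gives $\mathcal{J}(\phi^{*}T_{min})\subset\mathcal{O}_{Y}(-\lfloor N_{\sigma}(\phi^{*}L)\rfloor)$ divisor by divisor, and Proposition \ref{zardecom}(2) identifies the twisted pushforward of the right-hand side with $\mathcal{J}_{\sigma}(L)$. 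Your observation that $\mathcal{J}_{\sigma}(\beta\phi^{*}L)=\mathcal{O}_{Y}(-\lfloor\beta N\rfloor)$ stabilizes for $\beta$ near $1$ is correct but is doing work that the direct Lelong-number argument renders unnecessary.
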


Combined with Theorem \ref{abundantequality} below, Theorem \ref{mostlypsefcontainment} determines completely $\mathcal{J}(h_{min})$ for varieties with good birational behavior, such as smooth Fano and toric varieties and minimal surfaces $S$ with $\kappa(S) = 0$.

The remaining cases of Conjecture \ref{psefequality} would be settled by the multiplier ideal version of the Openness Conjecture of \cite{dk01}.  This conjecture predicts that if $\varphi$ is a plurisubharmonic function then $\mathcal{J}(\varphi) = \mathcal{J}((1+\epsilon)\varphi)$ for sufficiently small $\epsilon >0$.  In the setting of Theorem \ref{mostlypsefcontainment} the conjecture yields for any pseudo-effective $\mathbb{R}$-divisor $L$
\begin{equation*}
\mathcal{J}(T_{min}) = \mathcal{J}((1+\epsilon)T_{min}) \subset \mathcal{J}_{\sigma}((1+\epsilon)L) = \mathcal{J}_{\sigma}(L)
\end{equation*}
where the second and third containments follow from Theorem \ref{mostlypsefcontainment} (1) and the remarks after Definition \ref{sigmadefn} respectively.

Our approach to Theorem \ref{mostlypsefcontainment} is essentially algebraic in nature (see Remark \ref{otremark}).  The proof involves the study of the behavior of asymptotic multiplier ideals under perturbations of the divisor.  The connection to analysis is then made using valuation theory and the results of \cite{boucksom04} on the divisorial Zariski decomposition.

There is a variation of Theorem \ref{mostlypsefcontainment} that holds with no restrictions on $L$.  Following \cite{hacon04}, for a pseudo-effective divisor $L$ we define the perturbed ideal $\mathcal{J}_{-}(L)$ as the asymptotic multiplier ideal of $L$ increased by a sufficiently small ample divisor (see Definition \ref{perturbedidealdefn}).  Since $\mathcal{J}_{\sigma}(L) \subset \mathcal{J}_{-}(L)$, the following theorem gives a weaker version of Conjecture \ref{psefequality}.

\begin{thrm} \label{idealcomparison}
Let $X$ be a smooth projective variety and let $L$ be a pseudo-effective Cartier divisor on $X$.  Then
\begin{equation*}
\mathcal{J}(T_{min}) \subset \mathcal{J}_{-}(L)
\end{equation*}
where $T_{min}$ is a current of minimal singularities in the numerical class of $L$.
\end{thrm}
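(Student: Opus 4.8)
The plan is to reduce the containment to a divisor‑by‑divisor comparison on a single birational model: bound $\mathcal{J}(T_{min})$ from above by a ``valuative'' ideal read off from the generic Lelong numbers of $T_{min}$, and bound $\mathcal{J}_{-}(L)$ from below by the standard description of the asymptotic multiplier ideal of a big divisor in terms of asymptotic orders of vanishing. The perturbation by a small ample divisor is exactly what opens a gap between these two bounds, and the one genuinely analytic step — comparing Lelong numbers with orders of vanishing — is where the results of \cite{boucksom04} on the divisorial Zariski decomposition enter.

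So, fix an ample divisor $A$ and $\epsilon>0$ small enough that $\mathcal{J}_{-}(L)=\mathcal{J}(\Vert L+\epsilon A\Vert)$ (Definition \ref{perturbedidealdefn}), and set $D=L+\epsilon A$, which is big. I would choose a log resolution $\phi\colon Y\to X$ high enough to compute $\mathcal{J}(\Vert D\Vert)$, so that $\mathcal{J}(\Vert D\Vert)=\phi_{*}\mathcal{O}_{Y}(K_{Y/X}-\lfloor N_{\sigma}(\phi^{*}D)\rfloor)$, where $N_{\sigma}(\phi^{*}D)=\sum_{E}\sigma_{E}(\phi^{*}D)\,E$ with the sum over prime divisors $E$ on $Y$. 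On the other hand, on an arbitrary modification $\phi$ one has
\[
\mathcal{J}(T_{min})\subset\phi_{*}\mathcal{O}_{Y}\bigl(K_{Y/X}-\textstyle\sum_{E}\lfloor\nu(\phi^{*}T_{min},E)\rfloor\,E\bigr),
\]
where $\nu(\cdot,E)$ denotes the generic Lelong number along $E$. This follows from the transformation rule identifying $\mathcal{J}(T_{min})$ with $\phi_{*}$ of $\mathcal{J}(\phi^{*}T_{min})$ twisted by $\mathcal{O}_{Y}(K_{Y/X})$, together with the Siu decomposition $\phi^{*}T_{min}\geq\sum_{E}\nu(\phi^{*}T_{min},E)[E]$, which forces any local section $f$ of $\mathcal{J}(\phi^{*}T_{min})$ to satisfy $\ord_{E}(f)\geq\lfloor\nu(\phi^{*}T_{min},E)\rfloor$ for every $E$. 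The point to flag is that $T_{min}$ need not have analytic singularities, so $\mathcal{J}(T_{min})$ cannot itself be computed on a single model; but only this one‑sided bound is needed, and it holds on every model.

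It then remains to compare the two sets of coefficients. Since $\phi$ is birational, $\phi^{*}T_{min}$ is again a current of minimal singularities, now in the class $\phi^{*}L$; adding $\epsilon\,\phi^{*}\omega_{A}$ for a smooth form $\omega_{A}$ representing $A$ gives a positive current in the big class $\phi^{*}D$, whence
\[
\nu(\phi^{*}T_{min},E)=\nu(\phi^{*}T_{min}+\epsilon\,\phi^{*}\omega_{A},E)\geq\nu(T_{min,\phi^{*}D},E)=\sigma_{E}(\phi^{*}D),
\]
the last equality being the coincidence of the analytic and algebraic Zariski decompositions for big classes (cf.\ \cite{boucksom04}). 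Thus $\lfloor\nu(\phi^{*}T_{min},E)\rfloor\geq\lfloor\sigma_{E}(\phi^{*}D)\rfloor$ for every $E$, and combining the two displays gives $\mathcal{J}(T_{min})\subset\phi_{*}\mathcal{O}_{Y}(K_{Y/X}-\lfloor N_{\sigma}(\phi^{*}D)\rfloor)=\mathcal{J}(\Vert D\Vert)=\mathcal{J}_{-}(L)$.

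I expect the main obstacle to be precisely the middle inequality $\nu(\phi^{*}T_{min},E)\geq\sigma_{E}(\phi^{*}D)$ — the one step relating an analytic invariant of $T_{min}$ to an algebraic invariant of $D$, which has to hold uniformly over all prime divisors of all sufficiently high models — with everything else reducing to bookkeeping with integral parts and the birational behaviour of (analytic and asymptotic) multiplier ideals. Finally, it is worth noting why this does not already settle Conjecture \ref{psefequality}: the argument only yields $\mathcal{J}(T_{min})\subset\phi_{*}\mathcal{O}_{Y}(K_{Y/X}-\lfloor N_{\sigma}(\phi^{*}(L+\epsilon A))\rfloor)$ for each small $\epsilon>0$, and because $\sigma_{E}(\phi^{*}(L+\epsilon A))\leq\sigma_{E}(\phi^{*}L)$ — with the inequality often strict and, worse, able to cross an integer — this ideal can be strictly larger than $\mathcal{J}_{\sigma}(L)$, which is designed to record the limiting values $\sigma_{E}(\phi^{*}L)$ themselves. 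Removing that slack, i.e.\ replacing $\mathcal{J}_{-}(L)$ by $\mathcal{J}_{\sigma}(L)$, is exactly what the multiplier‑ideal form of the Openness Conjecture would provide, as in the discussion following Theorem \ref{mostlypsefcontainment}.
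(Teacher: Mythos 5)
Your analytic half is sound and coincides with the paper's: the bound $\mathcal{J}(T_{min})\subset\phi_{*}\mathcal{O}_{Y}\bigl(K_{Y/X}-\sum_{E}\lfloor\nu(\phi^{*}T_{min},E)\rfloor E\bigr)$ via the transformation rule and the Siu decomposition is Lemma \ref{codimensiononecalculation}, and the inequality $\nu(\phi^{*}T_{min},E)\geq\sigma_{E}(\phi^{*}D)$ via Boucksom's theorem for big classes is Corollary \ref{lelongcor}. The gap is in the algebraic half, at the assertion that one can choose a single log resolution $\phi$ with $\mathcal{J}(\Vert D\Vert)=\phi_{*}\mathcal{O}_{Y}(K_{Y/X}-\lfloor N_{\sigma}(\phi^{*}D)\rfloor)$. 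What a log resolution of $|pD|$ (for $p$ computing the asymptotic ideal) actually gives is $\mathcal{J}(\Vert D\Vert)=\phi_{*}\mathcal{O}_{Y}(K_{Y/X}-\lfloor \tfrac{1}{p}F_{p}\rfloor)$ with $F_{p}$ the fixed divisor; since $\tfrac{1}{p}\ord_{E}(F_{p})\geq\sigma_{E}(\phi^{*}D)$ by subadditivity, this only yields $\mathcal{J}(\Vert D\Vert)\subset\phi_{*}\mathcal{O}_{Y}(K_{Y/X}-\lfloor N_{\sigma}(\phi^{*}D)\rfloor)$ --- the opposite of the containment your chain needs. The reverse containment would say that membership in $\mathcal{J}(\Vert D\Vert)$ is detected by the finitely many prime divisors living on one model $Y$; by Theorem \ref{qvaluativedescription} membership must instead be tested against \emph{every} valuation in $\Val^{*}(X)$, and no single model sees them all. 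Whether divisorial valuations (even on all models) suffice to cut out such ideals is precisely the delicate issue behind Theorem \ref{diminishedvaluation}, which is resolved only under extra hypotheses. Your Lelong-number inequality does hold uniformly over all models, so you have in fact verified the divisorial inequalities everywhere; what is missing is the bridge from all divisorial valuations to all valuations.

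That bridge is what the paper supplies, and it is where the ample perturbation earns its keep twice. Proposition \ref{perturbedvaluation} shows that $\mathfrak{q}\subset\mathcal{J}_{-}(L)$ already follows from the non-strict inequality $v(\mathfrak{q})\geq\sigma_{v}(L)-A(v)$ for divisorial $v$ only: one passes to an arbitrary valuation by taking a net of divisorial valuations converging to it with convergent log discrepancies (Lemma \ref{divdensity}) and using semicontinuity, and then, for each fixed $\epsilon$, the strict inequality demanded by Theorem \ref{qvaluativedescription} for $\mathcal{J}(\Vert L+\epsilon A\Vert)$ is recovered from the non-strict one because $v(\Vert L+\epsilon A\Vert)<\sigma_{v}(L)$ strictly (Lemma \ref{observation}). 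Your floor bound $v_{E}(\mathcal{J}(T_{min}))\geq\lfloor\sigma_{E}(\phi^{*}L)\rfloor>\sigma_{E}(\phi^{*}L)-A(\ord_{E})$ feeds directly into that criterion, and this is exactly the paper's proof. So the repair is not to fix one $\epsilon$ and one model at the outset, but to quantify over all models and invoke the valuative characterization of $\mathcal{J}_{-}(L)$.
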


\subsection{Abundant Divisors}

It is interesting to ask when the containment of Conjecture \ref{psefequality} is actually an equality.  There are examples showing that this containment may be strict.  Nevertheless, in special geometric situations one can say more.  Recall that a divisor $L$ is called abundant if its Iitaka dimension and numerical dimension coincide (see Definition \ref{abundancedefn}).  Generalizing \cite{russo09}, we show that abundance can be detected using multiplier ideals.

\begin{thrm} \label{abundantequality}
Let $X$ be a smooth variety and let $L$ be a divisor on $X$ with $\kappa(L) \geq 0$.  Then $L$ is abundant iff
\begin{equation*}
\mathcal{J}(\Vert m\phi^{*}L \Vert) = \mathcal{J}_{\sigma}(m\phi^{*}L)
\end{equation*}
for every positive integer $m$ and every birational map $\phi: Y \to X$ from a smooth variety $Y$.
\end{thrm}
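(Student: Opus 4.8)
The plan is to recast the equality of ideals as an equality of two ``asymptotic order functions'' over all divisorial valuations, and then to recognise abundance as exactly the condition forcing them to agree. For a prime divisor $\Gamma$ on a model $\mu\colon Y\to X$, write $\ord_{\Gamma}\Vert D\Vert$ for the asymptotic order of vanishing of $\Vert D\Vert$ along $\Gamma$ (so $\sum_{\Gamma}\ord_{\Gamma}\Vert\mu^{*}D\Vert\,\Gamma$ is the asymptotic fixed divisor) and $\sigma_{\Gamma}(D)$ for Nakayama's invariant (so $N_{\sigma}(\mu^{*}D)=\sum_{\Gamma}\sigma_{\Gamma}(\mu^{*}D)\,\Gamma$). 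On a sufficiently high model one has $\mathcal{J}(\Vert mD\Vert)=\mu_{*}\mathcal{O}_{Y}(K_{Y/X}-\lfloor\sum_{\Gamma}\ord_{\Gamma}\Vert m\mu^{*}D\Vert\,\Gamma\rfloor)$ from the resolution of the linear series, while by the construction in Section \ref{diminishedsection} the diminished ideal can be computed on a high model as $\mathcal{J}_{\sigma}(mD)=\mu_{*}\mathcal{O}_{Y}(K_{Y/X}-\lfloor N_{\sigma}(m\mu^{*}D)\rfloor)$; since $\sigma_{\Gamma}\le\ord_{\Gamma}\Vert\cdot\Vert$ always, this recovers the unconditional containment $\mathcal{J}(\Vert m\phi^{*}L\Vert)\subset\mathcal{J}_{\sigma}(m\phi^{*}L)$. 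Both invariants are homogeneous in $D$, so letting $m\to\infty$ shows: the displayed equality holds for every $m$ and every $\phi$ if and only if $\sigma_{\Gamma}(\phi^{*}L)=\ord_{\Gamma}\Vert\phi^{*}L\Vert$ for every birational $\phi\colon Y\to X$ and every prime divisor $\Gamma$ on $Y$. It thus suffices to prove that this last condition is equivalent to abundance of $L$.

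For the direction ``$L$ abundant $\Rightarrow$ the order functions agree'', I would invoke the structure theory of abundant divisors: after replacing $X$ by a model one may assume the Iitaka fibration $f\colon X\to Z$ of $L$ satisfies $L\equiv f^{*}B+N_{\sigma}(L)$ with $B$ a big $\mathbb{Q}$-divisor on $Z$ (so $\dim Z=\kappa(L)=\nu(L)$). Restricting to a general fibre $X_{z}$ gives $L|_{X_{z}}\equiv N_{\sigma}(L)|_{X_{z}}$, and since $\kappa(L|_{X_{z}})=0$ this class is the unique effective divisor, which therefore equals both $N_{\sigma}(L|_{X_{z}})$ and the asymptotic fixed divisor of $\Vert L|_{X_{z}}\Vert$. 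Combining the resulting equality on general fibres with the equality on $Z$ coming from bigness of $B$ (where $\sigma_{\bullet}=\ord_{\bullet}\Vert\cdot\Vert$ by continuity on the big cone), and treating separately the components of $N_{\sigma}(L)$ and the divisors over proper subvarieties of $Z$, one gets $\sigma_{\Gamma}(L)=\ord_{\Gamma}\Vert L\Vert$ for every prime divisor $\Gamma$ on $X$. This structure is preserved under further birational pullback (the fibration composes and $\kappa,\nu$ are birational invariants), so the order functions agree on every model dominating the chosen one; since such models are cofinal and both ideals are compatible with birational pushforward, the equality of ideals propagates to all models.

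For the converse I would argue by contraposition. Suppose $L$ is not abundant, so $\nu(L)>\kappa(L)$, and pass to a model carrying the Iitaka fibration $f\colon X\to Z$ with $\dim Z=\kappa(L)$. If $L$ restricted to a general fibre $X_{z}$ had numerical dimension $0$, then by the behaviour of the numerical dimension under fibrations one would get $\nu(L)\le\dim Z=\kappa(L)$, a contradiction; hence $\nu(L|_{X_{z}})\ge 1$ while $\kappa(L|_{X_{z}})=0$, so $L|_{X_{z}}$ is non-abundant and its unique effective representative strictly dominates $N_{\sigma}(L|_{X_{z}})$ (equality would force $P_{\sigma}(L|_{X_{z}})\equiv 0$ and hence $\nu=0$). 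Thus there is a prime divisor $\Gamma_{z}$ in its support with $\sigma_{\Gamma_{z}}(L|_{X_{z}})<\ord_{\Gamma_{z}}\Vert L|_{X_{z}}\Vert$. Spreading $\Gamma_{z}$ out over general $z$ yields a horizontal prime divisor $\Gamma$ on $X$; using that for general $z$ one has $\ord_{\Gamma}\Vert L\Vert=\ord_{\Gamma_{z}}\Vert L|_{X_{z}}\Vert$ (the fixed part of $|mL|$ restricts to the fixed part on $X_{z}$) and $\sigma_{\Gamma}(L)\le\sigma_{\Gamma_{z}}(L|_{X_{z}})$ (sections of small ample perturbations of $L$ restrict asymptotically onto a general fibre, since $X_{z}\not\subset\mathbf{B}_{+}$), one obtains $\sigma_{\Gamma}(L)<\ord_{\Gamma}\Vert L\Vert$. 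Hence $\mathcal{J}(\Vert mL\Vert)\subsetneq\mathcal{J}_{\sigma}(mL)$ on this model for $m\gg 0$, so the equality fails for some $m$ and some $\phi$.

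The routine ingredients are the standard resolution description of $\mathcal{J}(\Vert\cdot\Vert)$, the high-model description of $\mathcal{J}_{\sigma}$ from Section \ref{diminishedsection} together with the remark after Definition \ref{sigmadefn}, and the birational-pushforward compatibilities. The main obstacle I anticipate is the two ``restriction to a general fibre'' arguments: controlling $\sigma_{\Gamma}$ and $\ord_{\Gamma}\Vert\cdot\Vert$ for a horizontal $\Gamma$ in terms of the fibrewise invariants — for $L$ itself and for its small ample perturbations, and with enough uniformity in $z$ that the floor functions and the passage $m\to\infty$ behave — together with the precise invocation of the structure theorem for abundant divisors and of Nakayama's identity $\nu(D)=\nu(P_{\sigma}(D))$.
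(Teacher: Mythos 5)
Your overall strategy -- reduce the equality of ideals to the equality $\sigma_{\Gamma}=\ord_{\Gamma}\Vert\cdot\Vert$ over divisorial valuations, prove this from the Iitaka fibration structure when $L$ is abundant, and in the non-abundant case produce a horizontal divisor over the Iitaka base witnessing $\sigma_{\Gamma}<\ord_{\Gamma}\Vert\cdot\Vert$ -- is the same as the paper's (the forward input is Proposition \ref{abundancevaluationequality}, the converse input is Proposition \ref{iitakafibrationdomination}). But your very first step contains a genuine gap. You assert that on a high model $\mathcal{J}_{\sigma}(mD)=\mu_{*}\mathcal{O}_{Y}\bigl(K_{Y/X}-\lfloor N_{\sigma}(m\mu^{*}D)\rfloor\bigr)$, i.e.\ that the diminished ideal is computed by its divisorial data. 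This formula is Proposition \ref{zardecom} (2), which is proved only under the hypothesis that $L$ admits a Zariski decomposition ($P_{\sigma}$ nef on some model); an abundant divisor need not have one. In general it is not known that $\mathcal{J}_{\sigma}(L)$ is determined by divisorial valuations at all -- this is exactly the open question raised at the end of Section \ref{diminishedsection}, and Theorem \ref{diminishedvaluation} supplies such a description only for big, weakly balanced, or Zariski-decomposable divisors. Consequently your equivalence ``ideals agree for all $m,\phi$ $\iff$ order functions agree on all models'' is only proved in one direction, and it is the unproved direction that the forward implication of the theorem needs: knowing $\sigma_{\Gamma}(\phi^{*}L)=\ord_{\Gamma}\Vert\phi^{*}L\Vert$ for every divisorial $\Gamma$ does not by itself give $\mathcal{J}_{\sigma}(m\phi^{*}L)\subset\mathcal{J}(\Vert m\phi^{*}L\Vert)$, because $\mathcal{J}_{\sigma}$ could a priori be cut out by non-divisorial conditions invisible to $N_{\sigma}$.

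The paper's fix, which you should adopt, is the weakly balanced rescaling: for small rational $\epsilon>0$ the divisor $(1+\epsilon)L$ is weakly balanced (Proposition \ref{perturbedstability}), so $\mathcal{J}_{\sigma}(L)=\mathcal{J}_{\sigma}((1+\epsilon)L)=\mathcal{J}_{-}((1+\epsilon)L)$, and membership in $\mathcal{J}_{-}$ \emph{is} testable on divisorial valuations (Proposition \ref{perturbedvaluation}, via the density and semicontinuity argument of Lemma \ref{divdensity}); the factor $1+\epsilon$ also converts the non-strict inequality in \eqref{perturbed} into the strict one required by Theorem \ref{qvaluativedescription}. With that substitution the forward direction is exactly the paper's proof. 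For the converse your contraposition is sound in outline, but the fibrewise comparison of $\sigma_{\Gamma}(L)$ with $\sigma_{\Gamma_{z}}(L|_{X_{z}})$ that you flag as the main obstacle is avoidable: apply characterization (3) of Theorem \ref{abundancedefn} to $P_{\sigma}(\phi^{*}L)$ to find a horizontal component $E$ of an effective divisor in its class, note $\sigma_{E}(\phi^{*}L)$ is just the coefficient of $E$ in $N_{\sigma}(\phi^{*}L)$ by definition, and use Proposition \ref{iitakafibrationdomination} to see the full (strictly larger) coefficient of $E$ persists in $v_{E}(\Vert\phi^{*}L\Vert)$; membership of $I_{E}^{N}$ in $\mathcal{J}_{\sigma}$ is then again certified through the weakly balanced criterion rather than through the unproved high-model formula.
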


Note that for an abundant divisor $\mathcal{J}(T_{min})$ should coincide with both of these ideals by Conjecture \ref{psefequality}.  One wonders if there are any other natural geometric conditions one can impose on $L$ that would give an equality in Conjecture \ref{psefequality}.    %Applying Theorem \ref{mostlypsefcontainment} now allows us to determine $\mathcal{J}(h_{min})$ for every divisor on varieties with good birational behavior, such as smooth Fano or toric varieties.

\subsection{Outline}

The basic observation behind Theorem \ref{mostlypsefcontainment} is as follows.  The main tool for understanding the singularities of a current is Demailly approximation \cite{demailly93}.  However, this only allows us to find the multiplier ideal of a perturbation of our current.  The obstruction to understanding our original ideal is the difficult Openness Conjecture of \cite{dk01}.

In contrast, it is much easier to understand the behavior of asymptotic multiplier ideals under perturbations of the divisor.  We study this behavior in Section \ref{perturbedsection} using ideas of \cite{hacon04} and the relationship between valuations and asymptotic multiplier ideals described in \cite{jm10}.

We then use valuation theory to relate the algebraic perturbations to the analytic multiplier ideal.  Although the connections between analytic multiplier ideals and valuations are not fully understood, we only need the results of \cite{boucksom04} describing the Lelong numbers of $T_{min}$ using asymptotic divisorial valuations.  This allows us to give a bound on $\mathcal{J}(T_{min})$ whenever the perturbed ideal $\mathcal{J}_{\sigma}(L)$ is determined by its divisorial valuations.  The requirements on $L$ in Theorem \ref{mostlypsefcontainment} are precisely those which guarantee this condition on $\mathcal{J}_{\sigma}(L)$.

%Section \ref{examplessection} describes several classes of varieties whose analytic multiplier ideals are completely described by Theorems \ref{mostlypsefcontainment} and \ref{abundantequality}.  These include smooth Mori Dream Spaces and minimal surfaces of Kodaira dimension $0$.

\begin{rmk} \label{otremark}
Rather than using valuation theory to prove Theorem \ref{mostlypsefcontainment}, one could try to use Ohsawa-Takegoshi directly.  Emulating the proof of \cite{del00} Theorem 1.11, one can prove that for a pseudo-effective divisor $L$
\begin{equation*}
\mathcal{J}_{+}(T_{min}) \subset \mathcal{J}_{-}(L).
\end{equation*}
where $\mathcal{J}_{+}$ denotes the ``upper-regularized'' multiplier ideal.
Although this approach will lead to a weaker version of Theorem \ref{mostlypsefcontainment}, it  does not seem to yield the more precise results needed to analyze the examples in Section \ref{examplessection}.
\end{rmk}

\subsection{Acknowledgements}

This paper was initially a joint project with E.~Eisenstein.  I would like to thank him for his insights and for his encouragement.  Thanks also to M.~Jonsson and to M.~Musta\c t\u a for numerous helpful conversations.

\section{Background} \label{backgroundsection}

Throughout $X$ will be a smooth projective variety over $\mathbb{C}$.  The term `divisor' will always refer to an $\mathbb{R}$-Cartier divisor (equivalently, an $\mathbb{R}$-Weil divisor) unless otherwise qualified.  We will use the standard notations $\sim$, $\sim_{\mathbb{Q}}$, $\sim_{\mathbb{R}}$, and $\equiv$ to denote respectively linear, $\mathbb{Q}$-linear, $\mathbb{R}$-linear, and numerical equivalence of divisors.  The numerical class of an $\mathbb{R}$-divisor $L$ will be denoted by $c_{1}(L)$.

\subsection{$\mathbb{R}$-divisors}

Suppose that $L = \sum_{i} a_{i}L_{i}$ is an $\mathbb{R}$-Weil divisor.  The round-down of $L$ is $\lfloor L \rfloor = \sum_{i} \lfloor a_{i} \rfloor L_{i}$.  We denote the fractional part of $L$ by $\{ L \} := L - \lfloor L \rfloor$.  %We will use the notation $\mathfrak{b}(\lfloor L \rfloor)$ to denote the base ideal of the linear system $|\lfloor L \rfloor|$.

Recall that two $\mathbb{R}$-divisors $L,L'$ are linearly equivalent if $\{ L \} = \{ L' \}$ and
$\lfloor L \rfloor - \lfloor L' \rfloor = \mathrm{div}(f)$ for some rational function $f$ on $X$.
The linear system $|L|$ associated to $L$ is the set of all effective divisors linearly equivalent to $L$.  Note that if $L$ and $L'$ are $\mathbb{R}$-divisors with $L \sim L'$ then
\begin{itemize}
\item $kL \sim kL'$ for every positive integer $k$, and
\item for any effective $\mathbb{R}$-divisor $D$ we have $L+D \sim L' + D$.
\end{itemize}
An $\mathbb{R}$-divisor $A$ is said to be ample if it is a sum $A = \sum c_{i} A_{i}$ where $c_{i}>0$ and $A_{i}$ is an ample Cartier divisor.  \cite{lazarsfeld04} Proposition 1.3.13 shows that if $A$ is an ample $\mathbb{R}$-divisor and $A' \equiv A$ then $A'$ is also ample.

Suppose that some integer multiple of a divisor $L$ is linearly equivalent to an effective divisor.  \cite{nakayama04} shows that one can associate an Iitaka fibration $\psi$ to $L$, characterized up to birational equivalence by the fact that there is a unique effective divisor $\mathbb{Q}$-linearly equivalent to $L|_{F}$ for a very general fiber $F$ of $\psi$.  The Iitaka dimension $\kappa(L)$ is the dimension of the image of the Iitaka fibration.  If no multiple of $L$ is linearly equivalent to an effective divisor, we set $\kappa(L) = -\infty$.

The $\mathbb{R}$-stable base locus $\mathbf{B}_{\mathbb{R}}(L)$ is defined to be
\begin{equation*}
\mathbf{B}_{\mathbb{R}}(L) := \bigcap \{ \; \Supp(C) \; | \; C \geq 0 \; \; \textrm{and} \; \; C \sim_{\mathbb{R}} L \}.
\end{equation*}
This is always a Zariski-closed subset of $X$; we do not associate any scheme structure to it.  When $L$ is a $\mathbb{Q}$-divisor, this definition coincides with the usual stable base locus.

By perturbing $L$ by a small ample divisor, we can define a better-behaved variant of the $\mathbb{R}$-stable base locus.

\begin{defn}[\cite{elmnp05}, Lemma 1.14]
Let $X$ be smooth and let $L$ be a pseudo-effective divisor on $X$.  We define the diminished base locus to be
\begin{equation*}
\mathbf{B}_{-}(L) = \bigcup_{A} \mathbf{B}_{\mathbb{R}}(L + A)
\end{equation*}
as we vary over all ample $\mathbb{R}$-divisors $A$.
\end{defn}

It is checked in \cite{elmnp05} that $\mathbf{B}_{-}(L) \subset \mathbf{B}_{\mathbb{R}}(L)$ and that the diminished base locus only depends on the numerical class of $L$.   In some sense the diminished base locus measures the failure of $L$ to be nef.

\begin{comment}
The diminished base locus is probably not Zariski-closed, it is certainly a countable union of closed subsets due to the following theorem.

\begin{thrm}[\cite{nakayama04}, Theorem V.1.3]
There is an ample divisor $A$ such that for any pseudo-effective divisor $L$
\begin{equation*}
\mathbf{B}_{-}(L) = \bigcup_{m} \mathbf{B}_{\mathbb{R}}(\lceil mL \rceil + A)
\end{equation*}
\end{thrm}

Note that $L$ is nef iff $\mathbf{B}_{-}(L)$ is empty.  Thus the diminished base locus represents the obstacle to nefness of $L$.
\end{comment}

\subsection{Valuation Theory}

In this section we recall some basic facts about valuations.  We closely follow the conventions of \cite{jm10} and refer there for more details.

We use $\Val(X)$ to denote the set of real valuations of the function field $K(X)$ that have center on $X$.  Recall that the center of a valuation $v$ (if it exists) is the unique subvariety $V$ admitting a local inclusion of local rings $\mathcal{O}_{X,V} \hookrightarrow \mathcal{O}_{v}$.  We denote the center of a valuation $v$ by $\cent(v)$.  The trivial valuation is the valuation with center at the generic point of $X$; we will use the notation $\Val^{*}(X)$ to denote the subset of non-trivial valuations.  $\Val(X)$ carries the natural weak topology induced by the evaluation maps $\mathrm{eval_{f}}: \Val(X) \to \mathbb{R}$ where $f$ varies over all nonzero rational functions on $X$.  

For a valuation $v \in \Val(X)$ and an ideal sheaf $\mathfrak{a}$ we define
\begin{equation*}
v(\mathfrak{a}) = \min\{ v(f) | f \in \mathfrak{a}(U) \textrm{ for an open set } U \textrm{ intersecting } \cent(v) \}.
\end{equation*}
We also extend $v$ to $\mathbb{R}$-divisors by $\mathbb{R}$-linearity.

A valuation is divisorial if it is proportional to the valuation measuring the order of vanishing along some divisor $E$ on a birational model of $X$.  Divisorial valuations are dense in $\Val(X)$ and will play a key role in our analysis.

A valuation $v$ is quasi-monomial if there is a birational map $\phi: Y \to X$ from a nonsingular variety $Y$, a point $\eta \in Y$, and a system of coordinates $(y_{1},\ldots,y_{r})$ at $\eta$ such that $v$ has the following description.  For any $f \in \mathcal{O}_{Y,\eta}$ write the image $\widehat{f} \in \widehat{\mathcal{O}_{Y,\eta}}$ as
\begin{equation*}
\widehat{f} = \sum_{(i_{1},\ldots,i_{r}) \in \mathbb{Z}_{\geq 0}^{r}} c_{i_{1},\ldots,i_{r}} y_{1}^{i_{1}} \ldots y_{r}^{i_{r}}
\end{equation*}
where each $c_{i_{1},\ldots,i_{r}}$ is either zero or a unit.  Then there should be a vector $(\alpha_{1},\ldots,\alpha_{r}) \in \mathbb{R}_{\geq 0}^{r}$ such that
\begin{equation*}
v(f) = \min \left\{ \left. \sum_{j=1}^{r} \alpha_{j} i_{j} \right| c_{i_{1},\ldots,i_{r}} \neq 0 \right\}.
\end{equation*}

Let $(Y,D)$ be a pair consisting of a nonsingular variety $Y$, a reduced effective simple normal crossing divisor $D$, and a birational morphism $\pi: Y \to X$ that is an isomorphism outside of $\Supp(D)$.  Define $QM(Y,D)$ to be the set of all quasi-monomial valuations that can be described at a point $\eta \in Y$ using a coordinate system $(y_{1},\ldots,y_{r})$ such that each $y_{i}$ defines a component of $D$ at $\eta$.  By \cite{jm10} Remark 3.4 every quasi-monomial valuation lies in some $QM(Y,D)$.

$QM(Y,D)$ is naturally a subset of $\Val_{X}$ and thus carries the subspace topology.  For each $(Y,D)$, \cite{jm10} Section 4.3 constructs a retraction map $r_{Y,D}: \Val_{X} \to QM(Y,D)$ taking a valuation $v$ to the unique quasi-monomial valuation $w \in QM(Y,D)$ such that $w(D_{i}) = v(D_{i})$ for every irreducible component $D_{i}$ of $D$.

We next give an informal introduction to the log discrepancy function $A: \Val(X) \to \mathbb{R}_{\geq 0} \cup \{ \infty \}$, referring to \cite{jm10} for details.  For a valuation $v$ measuring the order of vanishing along a divisor $E$ over $X$, the log discrepancy is
\begin{equation*}
A(v) = v(K_{Y/X})+1
\end{equation*}
where $Y$ is some smooth birational model extracting $E$.  The log discrepancy admits a similar natural description for any quasi-monomial valuation.  For an arbitrary valuation, we define $A(v)$ by taking a supremum over the discrepancies of the quasi-monomial valuations $r_{Y,D}(v) \in QM(Y,D)$ as $Y$ and $D$ vary.  Thus:

\begin{lem}[\cite{jm10}, Lemma 5.7]
The log discrepancy function $$A: \Val(X) \to \mathbb{R}_{\geq 0} \cup \{ \infty \}$$ is lower semi-continuous.
\end{lem}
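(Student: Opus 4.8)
The plan is to realise $A$ as a pointwise supremum of lower semi‑continuous functions. By definition $A(v)=\sup_{(Y,D)}A(r_{Y,D}(v))$, the supremum running over the pairs $(Y,D)$ described above, and since a pointwise supremum of lower semi‑continuous functions is lower semi‑continuous, it is enough to fix one such pair $(Y,D)$ and show that $v\mapsto A(r_{Y,D}(v))$ is lower semi‑continuous on $\Val(X)$ — indeed I would show it is continuous.

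To do this I would first make $A\circ r_{Y,D}$ explicit. Write $D=D_1+\cdots+D_N$ for the decomposition into prime components and set $a_i:=A(\ord_{D_i})=1+\ord_{D_i}(K_{Y/X})$; since $X$ is smooth all the discrepancies $\ord_{D_i}(K_{Y/X})$ are non‑negative, so $a_i\geq 1>0$ for every $i$. For a quasi‑monomial valuation $w\in QM(Y,D)$, presented at a point $\eta$ by weights $\alpha_j$ on local equations of the components $D_j$ through $\eta$, one has $w(D_j)=\alpha_j$ for those $j$ while $w(D_j)=0$ for the remaining components (their local equations being units at $\eta$); by the description of $A$ on quasi‑monomial valuations recalled above this gives
\[
A(w)=\sum_{j:\,\eta\in D_j}\alpha_j\,a_j=\sum_{i=1}^{N}w(D_i)\,a_i .
\]
Because $r_{Y,D}(v)$ is characterised by $r_{Y,D}(v)(D_i)=v(D_i)$ for all $i$, I obtain
\[
A(r_{Y,D}(v))=\sum_{i=1}^{N}a_i\,v(D_i)\qquad\text{for all }v\in\Val(X).
\]
As the $a_i$ are positive constants, it then suffices to check that each $v\mapsto v(D_i)$ is lower semi‑continuous on $\Val(X)$, which is equivalent to the continuity of the retraction $r_{Y,D}\colon\Val(X)\to QM(Y,D)$ (the topology on $QM(Y,D)$ being the one induced by the maps $w\mapsto w(D_i)$). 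That continuity is provided by the construction of $r_{Y,D}$ in \cite{jm10}, Section 4.3, and the lemma follows.

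The only genuine content is therefore this last point: the lower semi‑continuity of $v\mapsto v(E)$ for a prime divisor $E$ on a birational model, equivalently the continuity of $r_{Y,D}$. Concretely, on the open set of valuations whose centre is disjoint from $\Supp(E)$ the function $v\mapsto v(E)$ vanishes identically — its minimum value — while near a valuation whose centre meets $\Supp(E)$ the divisor $E$ is locally cut out by a single regular function $f$, so there $v\mapsto v(E)$ agrees with the evaluation map $\mathrm{eval}_f$, which is continuous by definition of the weak topology on $\Val(X)$. The hard part is patching these local descriptions across valuations whose centre slides into or out of $\Supp(E)$; this is exactly the subtlety handled in the construction of the retraction in \cite{jm10}, and once it is granted the reduction above makes lower semi‑continuity of $A$ immediate.
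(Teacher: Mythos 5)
Your argument is correct, and it is essentially the proof of this lemma in \cite{jm10} itself (the paper states the lemma by citation only): there, $A$ is likewise exhibited as the pointwise supremum of the functions $A\circ r_{Y,D}$, each of which is continuous because $A(r_{Y,D}(v))=\sum_i A(\ord_{D_i})\,v(D_i)$ and each $v\mapsto v(D_i)$ is continuous. The only ingredient you import rather than prove --- the continuity of $v\mapsto v(D_i)$, i.e.\ of the retraction $r_{Y,D}$ --- is exactly the point handled in \cite{jm10} Section 4.3, and you identify and attribute it correctly.
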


We will need the following consequence of \cite{jm10} Theorem 4.9.

\begin{lem} \label{divdensity}
Let $X$ be a smooth variety and let $v \in \Val(X)$.  There is a net of divisorial valuations $w_{\alpha}$ converging to $v$ such that
\begin{equation*}
A(v) = \lim_{\to} A(w_{\alpha})
\end{equation*}
\end{lem}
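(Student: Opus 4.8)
The plan is to extract the net from the structure of $QM(Y,D)$ spaces together with the definition of $A$ as a supremum of log discrepancies of retractions, using \cite{jm10} Theorem 4.9. First I would recall the setup: by \cite{jm10} Remark 3.4 and the construction of the retractions $r_{Y,D}$, the spaces $QM(Y,D)$ form a directed system (ordered by the existence of dominating log-smooth models), and $\Val(X)$ is the inverse limit: for any $v \in \Val(X)$ the retractions $v_{Y,D} := r_{Y,D}(v)$ converge to $v$ as $(Y,D)$ ranges over this directed set. This gives a canonical net $v_{Y,D} \to v$ of quasi-monomial valuations, and I would take the index set of the desired net to be pairs $(Y,D)$.

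Next I would handle the log discrepancy. By the very definition of $A$ recalled in the excerpt (as the supremum of $A(r_{Y,D}(v))$ over all $(Y,D)$), and since refining $(Y,D)$ only increases $A(r_{Y,D}(v))$ (this monotonicity is part of \cite{jm10} Theorem 4.9 — the point is that passing to a finer model can only decrease discrepancy deficits, equivalently $A$ increases along the directed system of retractions), we get
\begin{equation*}
A(v) = \sup_{(Y,D)} A(v_{Y,D}) = \lim_{\to} A(v_{Y,D}).
\end{equation*}
So the net $v_{Y,D}$ already converges to $v$ with the correct limiting log discrepancy — except that its members are quasi-monomial, not necessarily divisorial. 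The remaining task is to replace each $v_{Y,D}$ by a nearby divisorial valuation without spoiling either the convergence to $v$ or the convergence of log discrepancies.

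For this last step I would use that divisorial valuations are dense in each $QM(Y,D)$ in a way compatible with $A$: a quasi-monomial valuation $w \in QM(Y,D)$ is determined by a weight vector $(\alpha_1,\dots,\alpha_r) \in \mathbb{R}^r_{\geq 0}$ on the components of $D$ through $\eta$, rational weight vectors give divisorial valuations, $A$ restricted to $QM(Y,D)$ is the linear (hence continuous) function $\sum \alpha_i (1 + \ord_{D_i}(K_{Y/X}))$, and the topology on $QM(Y,D)$ is the Euclidean topology on the weight vectors (\cite{jm10} Section 4.3). Hence for each $(Y,D)$ I can pick a divisorial $w_{Y,D} \in QM(Y,D)$ with $|A(w_{Y,D}) - A(v_{Y,D})| < \tfrac{1}{\text{(refinement level)}}$ and which agrees with $v_{Y,D}$ closely enough on a chosen finite set of rational functions; a standard diagonal/cofinality argument then produces a single net $w_\alpha \to v$ with $A(w_\alpha) \to A(v)$. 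The main obstacle is precisely verifying this compatibility — that one can simultaneously control the weak convergence $w_\alpha \to v$ and the convergence $A(w_\alpha) \to A(v)$ — but both follow from the explicit Euclidean description of $QM(Y,D)$ and the linearity of $A$ there, so no genuinely new input beyond \cite{jm10} Theorem 4.9 is needed.
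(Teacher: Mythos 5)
Your proposal is correct and follows essentially the same route as the paper: both rest on \cite{jm10} Theorem 4.9 (the retractions $r_{Y,D}(v)$ converge to $v$ and their log discrepancies increase to $A(v)$), followed by a perturbation inside $QM(Y,D)$ to a divisorial valuation using the density of rational weight vectors and the continuity (indeed linearity) of $A$ on $QM(Y,D)$. The only difference is bookkeeping — you index the net by models $(Y,D)$ and diagonalize, while the paper indexes by pairs (neighborhood of $v$, tolerance $\epsilon$) — which does not change the argument.
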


\begin{proof}
Assume first that $A(v) < \infty$.  It suffices to show that for any neighborhood $U$ of $v$ and any $\epsilon > 0$ there is a divisorial valuation $w_{U,\epsilon} \in U$ such that
\begin{equation*}
A(v) - \epsilon <A(w_{U,\epsilon}).
\end{equation*}
Choose some model $(Y,D)$ such that $A(v)-\epsilon < A(r_{Y,D}(v))$. 
Since log discrepancies do not decrease under passing to higher models, by \cite{jm10} Theorem 4.9 we may also assume that $r_{Y,D}(v) \in U$.

Recall that $A(v)$ is continuous and that divisorial valuations are dense on each $QM(Y,D)$.  Thus, we may choose $w_{U,\epsilon}$ sufficiently close to $r_{Y,D}(v)$ to satisfy all the desired criteria.

The proof when $A(v) = \infty$ is essentially the same.
\end{proof}

\subsection{Asymptotic Valuations}

We will often use asymptotic versions of the valuations discussed in the previous section.

\begin{defn}
Let $L$ be a divisor on $X$ with $\kappa(L) \geq 0$.  For any $v \in \Val(X)$ we define the asymptotic valuation
\begin{equation*}
v(\Vert L \Vert) := \liminf_{m \to \infty} \left\{ \left. \frac{1}{m}v(D) \right| D \in |mL| \right\}.
\end{equation*}
\end{defn}

As discussed in the introduction, we will be particularly interested in how asymptotic behavior changes as we perturb our divisor.  The following properties are verified for divisorial valuations in \cite{nakayama04} Chapter III.  Since the proof only relies on formal properties of valuations, they extend to our context with no change (see also \cite{elmnp05}, Theorem A).

\begin{thrm}[\cite{nakayama04}, Chapter III] \label{bigconefunction}
Let $X$ be a smooth projective variety and $v \in \Val^{*}(X)$.  If $L$ and $L'$ are big divisors with $L \equiv L'$ then $v(\Vert L \Vert) = v(\Vert L' \Vert)$.  Furthermore, the naturally induced function
\begin{equation*}
v(\Vert - \Vert): \mathrm{Big}(X) \to \mathbb{R}
\end{equation*}
is continuous.
\end{thrm}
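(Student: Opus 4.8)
The plan is to run the argument of \cite{nakayama04}, Chapter III (see also \cite{elmnp05}, Theorem A), checking at each step that nothing is used about $v$ beyond its having a center on $X$, so that the case of divisorial valuations extends verbatim to an arbitrary $v \in \Val^{*}(X)$. First I would record the formal properties. Since $L$ is big, $\kappa(L) = \dim X$ and so $|mL| \neq \emptyset$ for all sufficiently divisible $m$; as $v(D) \geq 0$ for effective $D$ while $v(\Vert L \Vert) \leq \tfrac{1}{m_{0}} v(D_{0})$ for any fixed $D_{0} \in |m_{0}L|$, the invariant $v(\Vert L \Vert)$ is a well-defined nonnegative real number. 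The function $m \mapsto \min_{D \in |mL|} v(D)$ is subadditive (add effective divisors and use $v(D+D') = v(D)+v(D')$), so by Fekete's lemma the $\liminf$ in the definition is a genuine limit equal to the infimum; this yields positive homogeneity $v(\Vert tL \Vert) = t\, v(\Vert L \Vert)$ and subadditivity $v(\Vert L_{1}+L_{2}\Vert) \leq v(\Vert L_{1}\Vert) + v(\Vert L_{2}\Vert)$, hence convexity of $v(\Vert - \Vert)$ on the cone of big $\mathbb{R}$-divisors. Finally, if $A$ is ample then $|mA|$ is basepoint free for $m \gg 0$, so a general member $D$ has $\cent(v) \not\subset \Supp D$ and therefore $v(D)=0$; thus $v(\Vert A \Vert) = 0$.

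The crux is to upgrade this last vanishing to a limiting statement. Fix a big divisor $M$ and an ample divisor $A$ and set $g(t) = v(\Vert M + tA \Vert)$; since $M$ is big, $M - \epsilon A$ is big for small $\epsilon > 0$ and $M + tA$ is big for $t \geq 0$, so $g$ is defined on an open interval around $0$. By subadditivity together with $v(\Vert (t'-t)A \Vert) = 0$, the function $g$ is non-increasing; by subadditivity and homogeneity applied to the collinear divisors $M+sA$ and $M+tA$, it is midpoint-convex. A monotone midpoint-convex function on an open interval is continuous, so in particular
\begin{equation*}
v(\Vert M \Vert) = g(0) = \lim_{t \to 0} v(\Vert M + tA \Vert).
\end{equation*}
I regard this step — ruling out a jump of $g$ at the endpoint $t = 0$ — as the conceptual crux: monotonicity of $g$ is purely formal, but the convexity that rules out the jump is precisely what the remaining steps lean on.

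Now numerical invariance follows. Let $L \equiv L'$ be big and put $N = L' - L \equiv 0$; since $-N$ is again numerically trivial and $L + N = L'$ is again big, it suffices to show $v(\Vert L+N \Vert) \leq v(\Vert L \Vert)$. For small $\epsilon > 0$ write $L + N = (L - \epsilon A) + (\epsilon A + N)$ with $A$ a fixed ample divisor; here $L - \epsilon A$ is big and $\epsilon A + N \equiv \epsilon A$ is ample by \cite{lazarsfeld04}, Proposition 1.3.13, so subadditivity and $v(\Vert \epsilon A + N \Vert)=0$ give $v(\Vert L+N\Vert) \leq v(\Vert L - \epsilon A \Vert)$, and letting $\epsilon \to 0^{+}$ yields $v(\Vert L+N\Vert) \leq v(\Vert L \Vert)$ by the previous paragraph. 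Hence $v(\Vert - \Vert)$ descends to a function on the open big cone $\mathrm{Big}(X) \subset N^{1}(X)_{\mathbb{R}}$; this descended function is convex (subadditivity and homogeneity pass to classes) and locally bounded above on $\mathrm{Big}(X)$ (from a decomposition $\alpha = (\alpha_{0} - \gamma) + (\gamma + \alpha - \alpha_{0})$ with $\gamma$ a small fixed ample class), hence continuous on this open subset of a finite-dimensional vector space. Apart from the step above, the remaining work is routine bookkeeping with $\mathbb{R}$-divisors — making the passage to and from $\mathbb{Q}$-divisors precise and checking that the linear systems $|mL|$ defined via round-downs scale correctly — while all the geometric input is concentrated in the vanishing $v(\Vert A \Vert) = 0$ for ample $A$ and the numerical invariance of ampleness.
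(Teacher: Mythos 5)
Your proposal is correct and is essentially the argument the paper intends: the paper offers no proof of its own beyond citing \cite{nakayama04} Chapter III (and \cite{elmnp05}, Theorem A) and observing that the divisorial-valuation proof uses only formal properties of valuations, which is exactly the reconstruction you carry out (subadditivity, rational homogeneity, vanishing on ample classes via basepoint-freeness and the center, convexity plus monotonicity to rule out a jump, and the decomposition $L+N=(L-\epsilon A)+(\epsilon A+N)$ for numerical invariance). The only point to keep an eye on is the $\mathbb{R}$-divisor bookkeeping you already flag — in particular, $\mathbf{B}(|\lfloor mA\rfloor|)=\emptyset$ for $m\gg 0$ needs a Castelnuovo--Mumford/Fujita-type argument since $\lfloor mA\rfloor$ varies with $m$ — but this is routine and does not affect the structure of the proof.
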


\begin{cor} \label{rescaleval}
Let $X$ be a smooth projective variety and $v \in \Val^{*}(X)$.  Suppose that $L$ is a divisor with $\kappa(L) \geq 0$.  For any $t \in \mathbb{Q}_{> 0}$
\begin{equation*}
v(\Vert tL \Vert) = t v(\Vert L \Vert).
\end{equation*}
If $L$ is big, then the same statement holds for any $t \in \mathbb{R}_{>0}$.
\end{cor}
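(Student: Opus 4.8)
The plan is to reduce the statement to Theorem \ref{bigconefunction} by bootstrapping from the case where $L$ is big. First I would handle the rational scaling for a divisor $L$ with merely $\kappa(L) \geq 0$. Fix $t \in \mathbb{Q}_{>0}$ and write $t = p/q$ with $p,q$ positive integers. The key observation is that $v(\Vert kL \Vert) = k\, v(\Vert L \Vert)$ for every positive \emph{integer} $k$: indeed, the linear systems satisfy $|m(kL)| \supseteq k|mL|$ (adding together $m$ members of $|kL|$ that come from $|mkL|$ need not even be invoked — more simply $|mk L|$ is exactly the system computing $v(\Vert kL\Vert)$ after reindexing), so that $\frac1m v(D)$ for $D \in |m(kL)|$ ranges over the same multiset of values as $\frac{k}{mk} v(D)$ for $D \in |(mk)L|$, and passing to the liminf gives $v(\Vert kL\Vert) = k\, v(\Vert L\Vert)$. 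Applying this twice, $q\, v(\Vert tL \Vert) = v(\Vert q(tL) \Vert) = v(\Vert pL \Vert) = p\, v(\Vert L \Vert)$, which rearranges to the desired identity $v(\Vert tL \Vert) = t\, v(\Vert L \Vert)$.

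For the second assertion, assume $L$ is big and fix $t \in \mathbb{R}_{>0}$. Choose a sequence of positive rationals $t_n \to t$. Since $L$ is big and bigness is an open condition on the numerical class, $t_n L$ and $tL$ are all big for $n$ large, and $c_1(t_n L) \to c_1(t L)$ in the (finite-dimensional) Néron–Severi space. By the continuity clause of Theorem \ref{bigconefunction}, the function $v(\Vert - \Vert)$ is continuous on $\mathrm{Big}(X)$, so $v(\Vert t_n L \Vert) \to v(\Vert t L \Vert)$. On the other hand, by the rational case already established, $v(\Vert t_n L \Vert) = t_n\, v(\Vert L \Vert) \to t\, v(\Vert L \Vert)$. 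Comparing the two limits gives $v(\Vert tL \Vert) = t\, v(\Vert L \Vert)$.

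The only genuinely delicate point is the integer-scaling identity $v(\Vert kL \Vert) = k\, v(\Vert L \Vert)$, and even this is essentially bookkeeping: one must check that the liminf defining $v(\Vert kL\Vert)$ over $\{\frac1m v(D) : D \in |mkL|\}$ agrees with $k$ times the liminf defining $v(\Vert L\Vert)$ over $\{\frac1{mk} v(D) : D \in |mkL|\}$ — but these are literally the same family of divisors up to the factor $k$, so the liminfs coincide after rescaling. (Strictly one should confirm that restricting the liminf for $v(\Vert L\Vert)$ to multiples $mk$ of $k$ does not change its value; this follows from superadditivity of $m \mapsto \inf\{v(D) : D \in |mL|\}$ together with Fekete's lemma, which is exactly the content behind the well-definedness of asymptotic valuations and is implicit in \cite{nakayama04}.) So the main obstacle is simply being careful about which index set the liminf ranges over; no new geometric input beyond Theorem \ref{bigconefunction} is required.
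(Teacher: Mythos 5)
Your proof is correct and follows the same route as the paper, which simply asserts that the first statement ``follows easily from the definition'' and deduces the second from continuity on the big cone (Theorem \ref{bigconefunction}); you have just filled in the definitional bookkeeping explicitly. One small slip: the function $m \mapsto \inf\{v(D) : D \in |mL|\}$ is \emph{sub}additive, not superadditive, but Fekete's lemma applies exactly as you intend.
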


\begin{proof}
The first statement follows easily from the definition.   Since $v(\Vert - \Vert)$ is continuous on the big cone by Theorem \ref{bigconefunction}, the second statement follows from the first.
\end{proof}

The behavior of asymptotic valuations along the boundary of the pseudo-effective cone can be subtle.  Following \cite{nakayama04}, we will consider a natural extension of the function $v(\Vert - \Vert)$ to the pseudo-effective boundary.

\begin{defn}
Let $L$ be a pseudo-effective divisor on $X$.  Fix an ample divisor $A$ on $X$.  For any $v \in \Val^{*}(X)$ we define the $\sigma$-valuation
\begin{equation*}
\sigma_{v}(L) := \lim_{\epsilon \to 0} \left\{ v(\Vert L+\epsilon A \Vert) \right\}.
\end{equation*}
\end{defn}

The arguments of \cite{nakayama04} III.1.5 Lemma show that $\sigma_{v}$ is independent of the choice of $A$.  Note that $\sigma_{v}$ coincides with $v(\Vert - \Vert)$ on the big cone of $X$.

Fix a pseudo-effective divisor $L$ on $X$.  For simplicity, for a prime divisor $\Gamma$ on $X$ we write $\sigma_{\Gamma}$ instead of $\sigma_{\mathrm{ord}_{\Gamma}}$.  \cite{nakayama04} shows that as $\Gamma$ varies over all prime divisors on $X$ there are only finitely many choices of $E$ such that $\sigma_{\Gamma}(L) > 0$, allowing us to make the following definition.

\begin{defn}
Let $L$ be a pseudo-effective divisor.  Define
\begin{equation*}
N_{\sigma}(L) = \sum \sigma_{\Gamma}(L) \Gamma \qquad \qquad P_{\sigma}(L) = L - N_{\sigma}(L)
\end{equation*}
The decomposition $L = N_{\sigma}(L) + P_{\sigma}(L)$ is called the \emph{divisorial Zariski
decomposition} of $L$.
\end{defn}

\begin{defn}
Let $L$ be a pseudo-effective divisor.  We say that $L$ has a Zariski decomposition if there is a birational map $\phi: Y \to X$ such that $P_{\sigma}(\phi^{*}L)$ is nef.
\end{defn}

We will frequently use the following observation.

\begin{lem} \label{observation}
Let $L$ be a divisor with $\kappa(L) \geq 0$.  Fix an ample divisor $A$.  Then for any $\epsilon>0$ such that $A + \epsilon L$ is ample,
\begin{equation*}
v(\Vert L + A \Vert) \leq (1-\epsilon) v(\Vert L \Vert).
\end{equation*}
In particular $v(\Vert L + tA \Vert)$ is strictly decreasing in $t$.
\end{lem}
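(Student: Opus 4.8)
The plan is to reduce the estimate to the scaling property of $v(\Vert-\Vert)$ (Corollary \ref{rescaleval}) together with two elementary facts. First, $v(\Vert-\Vert)$ is subadditive: $v(\Vert D_1+D_2\Vert)\le v(\Vert D_1\Vert)+v(\Vert D_2\Vert)$ whenever $\kappa(D_1),\kappa(D_2)\ge0$, since $E_i\in|mD_i|$ gives $E_1+E_2\in|m(D_1+D_2)|$ with $v(E_1+E_2)=v(E_1)+v(E_2)$, and $v(\Vert\cdot\Vert)$ is the infimum of the resulting subadditive sequence. Second, $v(\Vert H\Vert)=0$ for any ample divisor $H$: when $H$ is an ample $\mathbb{Q}$-divisor a sufficiently divisible $|mH|$ is base-point free, so a general member avoids $\cent(v)$ and has $v$-value $0$; for an ample $\mathbb{R}$-divisor one writes $mH=\lfloor mH\rfloor+\{mH\}$, notes that $\lfloor mH\rfloor$ is an ample integral divisor for $m\gg0$ (its class approaches $c_1(H)$) while $v(\{mH\})$ stays bounded, and divides by $m$.

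It suffices to treat $\epsilon\in(0,1)$. I would first observe that the hypothesis propagates downward: if $A+\epsilon L$ is ample and $0<\epsilon'<\epsilon$ then
\[
A+\epsilon' L=(1-\epsilon'/\epsilon)A+(\epsilon'/\epsilon)(A+\epsilon L)
\]
is a positive $\mathbb{R}$-linear combination of ample divisors, hence ample. Fix a rational $\epsilon'\in(0,\epsilon)$ and decompose
\[
L+A=(1-\epsilon')L+(\epsilon' L+A).
\]
Here $\kappa((1-\epsilon')L)=\kappa(L)\ge0$ because $1-\epsilon'$ is a positive rational, and $\epsilon' L+A$ is ample, hence big. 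Subadditivity gives $v(\Vert L+A\Vert)\le v(\Vert(1-\epsilon')L\Vert)+v(\Vert\epsilon' L+A\Vert)$; by Corollary \ref{rescaleval} applied to the rational scalar $1-\epsilon'$ the first term is $(1-\epsilon')v(\Vert L\Vert)$, and by the second fact the last term vanishes. Thus $v(\Vert L+A\Vert)\le(1-\epsilon')v(\Vert L\Vert)$ for every rational $\epsilon'\in(0,\epsilon)$, and since $v(\Vert L\Vert)\ge0$, letting $\epsilon'\uparrow\epsilon$ gives $v(\Vert L+A\Vert)\le(1-\epsilon)v(\Vert L\Vert)$.

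For the last assertion, fix $0\le t<t'$ and apply the inequality just proved to $M:=L+tA$ and $A':=(t'-t)A$: one has $\kappa(M)\ge0$ ($M$ is big when $t>0$ and equals $L$ when $t=0$), $A'$ is ample, and since the ample cone is open there is $\epsilon\in(0,1)$ with $A'+\epsilon M$ ample. As $L+t'A=M+A'$, this yields
\[
v(\Vert L+t'A\Vert)\le(1-\epsilon)\,v(\Vert L+tA\Vert),
\]
a strict decrease wherever $v(\Vert L+tA\Vert)>0$ (and once the value reaches $0$ it stays $0$).

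The one point needing care is the bookkeeping forced by working with $\mathbb{R}$-divisors: one must make sure that the linear systems $|mD|$ entering subadditivity and the ample vanishing are genuinely nonempty, which is exactly why the hypothesis $\kappa(L)\ge0$ is imposed and why it is convenient to pass to a rational scalar $\epsilon'$, so that Corollary \ref{rescaleval} can be invoked without first knowing that $L$ is big. Everything else is a formal manipulation of the definitions.
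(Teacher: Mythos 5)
Your argument is correct and is essentially the paper's: the same decomposition $L+A=(1-\epsilon')L+(\epsilon'L+A)$, with the monotonicity under adding an ample divisor (which the paper simply declares clear) unpacked as subadditivity of $v(\Vert-\Vert)$ plus $v(\Vert H\Vert)=0$ for ample $H$. Your insistence on a rational $\epsilon'$ followed by a limit is a worthwhile refinement, since Corollary \ref{rescaleval} is only stated for rational scalars when $L$ is not big, and the paper's one-line proof tacitly applies it to a possibly irrational $1-\epsilon$.
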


\begin{proof}
It is clear that for any ample divisor $H$ we have $v(\Vert L + H \Vert) \leq v(\Vert L \Vert)$.  Since $A+\epsilon L$ is ample, we can write
\begin{equation*}
v(\Vert L + A \Vert) = v(\Vert (1-\epsilon)L + (A+\epsilon L) \Vert) \leq (1-\epsilon) v(\Vert L \Vert).
\end{equation*}
\end{proof}

\section{Asymptotic Multiplier Ideals} \label{asymmultidealsection}

In this section we will develop the theory of asymptotic multiplier ideals for $\mathbb{R}$-divisors and demonstrate that they behave similarly to asymptotic multiplier ideals for $\mathbb{Q}$-divisors.

\subsection{Definition}
We begin by recalling the definition of a multiplier ideal of an $\mathbb{R}$-divisor.

\begin{defn}
Let $X$ be a smooth variety and let $L$ be an effective $\mathbb{R}$-divisor.  Let $\phi: Y \to X$ be a log resolution of $L$.  For $c \in \mathbb{R}_{>0}$ the multiplier ideal $\mathcal{J}(cL)$ is defined to be
\begin{equation*}
\mathcal{J}(cD) := \phi_{*}\mathcal{O}_{Y}(K_{Y/X} - \lfloor c \phi^{*}L \rfloor).
\end{equation*}
\cite{lazarsfeld04} Theorem 9.2.18 shows that the multiplier ideal is independent of the choice of log resolution.
\end{defn}

Just as for $\mathbb{Z}$-divisors, we can extend the construction to linear series.

\begin{constr}(cf.~\cite{lazarsfeld04} Definition 9.2.10)
Let $X$ be a smooth variety and let $L$ be an effective $\mathbb{R}$-divisor.  Choose a simultaneous log resolution $f: Y \to X$ of $L$ and $\mathfrak{b}(|\lfloor L \rfloor|)$.  
Write $f^{*}L \sim M + F + f^{*}\{ L \}$ where $M$ is a basepoint free divisor and $F$ is the fixed part of $| f^{*}\lfloor L \rfloor|$.  For any $c \in \mathbb{R}_{>0}$ we define
\begin{equation*}
\mathcal{J}(c|L|) = f_{*}\mathcal{O}_{Y}(K_{Y/X} - \lfloor cF + cf^{*}\{ L \} \rfloor).
\end{equation*}
Note that $\mathcal{J}(c|L|)$ is independent of the choice of resolution and only depends on the linear equivalence class of $L$.  Furthermore, if $D \sim L$ is an effective divisor then $\mathcal{J}(cD) \subset \mathcal{J}(c|L|)$.
\end{constr}

The following two lemmas can be proved in exactly the same way as for $\mathbb{Q}$-divisors.

\begin{lem}[\cite{lazarsfeld04} Proposition 9.2.26] \label{linearidealqequiv}
Let $X$ be a smooth variety, $L$ be an $\mathbb{R}$-divisor with $L \geq 0$, and $c \in \mathbb{R}_{>0}$.  Then there is some effective divisor $D$ with $D \sim_{\mathbb{Q}} L$ such that
\begin{equation*}
\mathcal{J}(c|L|) = \mathcal{J}(cD).
\end{equation*}
\end{lem}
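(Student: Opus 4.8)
The statement to prove is Lemma \ref{linearidealqequiv}: for a smooth variety $X$, an effective $\mathbb{R}$-divisor $L$, and $c \in \mathbb{R}_{>0}$, there exists an effective $\mathbb{Q}$-divisor $D \sim_{\mathbb{Q}} L$ with $\mathcal{J}(c|L|) = \mathcal{J}(cD)$.

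The plan is to work on a single simultaneous log resolution and reduce the problem to a statement about rational approximation of the floor function. First I would fix a simultaneous log resolution $f: Y \to X$ of $L$ and of $\mathfrak{b}(|\lfloor L \rfloor|)$, and write $f^{*}L \sim M + F + f^{*}\{L\}$ as in the construction above, where $M$ is basepoint free and $F$ is the fixed part of $|f^{*}\lfloor L \rfloor|$. By definition $\mathcal{J}(c|L|) = f_{*}\mathcal{O}_{Y}(K_{Y/X} - \lfloor cF + cf^{*}\{L\} \rfloor)$. The goal is to produce an effective $D \sim_{\mathbb{Q}} L$ whose multiplier ideal, computed on (a further blow-up of) the same model, has exactly the same round-down appearing in the formula.

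The key observation is that the divisor $E := cF + cf^{*}\{L\}$ has only finitely many irreducible components, so its round-down $\lfloor E \rfloor$ is unchanged if we perturb the coefficients of $E$ by a sufficiently small amount — more precisely, there is $\delta > 0$ such that any $\mathbb{R}$-divisor $E'$ with the same support and $\|E' - E\|_{\infty} < \delta$ satisfies $\lfloor E' \rfloor = \lfloor E \rfloor$, provided no coefficient of $E$ is itself an integer where we would need to be careful (and one can always perturb slightly downward to avoid that, or simply note that perturbing down never increases the floor and perturbing within an open interval not containing an integer keeps it constant). Since $M$ is basepoint free, I can choose a general member $M' \in |M|$ meeting the relevant divisors transversally; then on a log resolution of $M'$ the divisor $M'$ contributes nothing to the multiplier ideal. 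Now I want to choose a $\mathbb{Q}$-divisor $L' \sim_{\mathbb{Q}} L$ — take $L' = L + \frac{1}{k} \operatorname{div}(g)$ for suitable rational $g$, or more directly replace $\{L\}$ and the multiplicities in $F$ by nearby rationals — so that $f^{*}L' \sim_{\mathbb{Q}} M + F' + f^{*}\{L'\}$ with $F'$ and $\{L'\}$ having rational coefficients within $\delta/c$ of those of $F$ and $\{L\}$. Then $\lfloor cF' + cf^{*}\{L'\}\rfloor = \lfloor cF + cf^{*}\{L\}\rfloor$, and taking $D = L'$ (rescaled into an honestly effective divisor in its $\mathbb{Q}$-linear class if necessary) gives $\mathcal{J}(cD) = \mathcal{J}(c|L|)$, using that $\mathcal{J}(cD) \subset \mathcal{J}(c|L|)$ always and the reverse containment now follows from the equality of the defining sheaves on $Y$.

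The main obstacle I anticipate is bookkeeping around linear versus $\mathbb{Q}$-linear equivalence and making the perturbation honestly $\mathbb{Q}$-linearly equivalent to $L$ rather than merely numerically or ``formally'' close: I need an effective $D$ with $D \sim_{\mathbb{Q}} L$, so I should realize the perturbation as adding a small multiple of the divisor of a rational function (which changes $\lfloor \cdot \rfloor$ of the pullback in a controlled way) or by moving within $|L|$ itself, and then check effectivity. A secondary subtlety is the integer-coefficient boundary case: if some coefficient of $cF + cf^{*}\{L\}$ is an integer, a downward perturbation is harmless (it can only enlarge the ideal sheaf $\mathcal{O}_Y(K_{Y/X} - \lfloor \cdot \rfloor)$, and the reverse inclusion $\mathcal{J}(cD) \subseteq \mathcal{J}(c|L|)$ must be argued slightly more carefully, or one perturbs in the direction that keeps the floor equal), so I would simply choose all perturbations small and generic enough to avoid integer values entirely except where forced, matching the floor exactly. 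Everything else is the routine ``finitely many divisors, so a small perturbation doesn't change the round-down'' argument, identical to the $\mathbb{Q}$-divisor case cited from \cite{lazarsfeld04}.
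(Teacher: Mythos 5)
Your proposal goes wrong in two places, and the second is the essential one.

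First, you have misread the target: the lemma does not claim that $D$ is a $\mathbb{Q}$-divisor, only that $D$ is effective and $D \sim_{\mathbb{Q}} L$. This misreading drives your whole strategy of "replacing $\{L\}$ and the multiplicities in $F$ by nearby rationals," and that strategy cannot be carried out inside the $\mathbb{Q}$-linear equivalence class. By the paper's definition of linear equivalence of $\mathbb{R}$-divisors, $mD \sim mL$ forces $\{mD\} = \{mL\}$ and $\lfloor mD \rfloor \in |\lfloor mL \rfloor|$; so the effective divisors $\mathbb{Q}$-linearly equivalent to $L$ are exactly those of the form $\frac{1}{m}\left(\{mL\} + B\right)$ with $B \in |\lfloor mL \rfloor|$. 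The irrational fractional part is a linear-equivalence invariant and cannot be perturbed away, and adding $\frac{1}{k}\mathrm{div}(g)$ does not realize an arbitrary small perturbation of the coefficients of $F$. Fortunately none of this is needed: the round-down $\lfloor cF + cf^{*}\{L\}\rfloor$ in the definition of $\mathcal{J}(c|L|)$ already digests the irrational coefficients, so the only thing you must arrange is that the divisor $D$ you pick has the same round-down on $Y$.

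Second, and more seriously, your treatment of the moving part fails for $c \geq 1$. A single general member $M' \in |M|$ does contribute to the multiplier ideal: on the resolution, $f^{*}D$ acquires the term $cM'$, and $\lfloor cM' \rfloor = \lfloor c \rfloor M' \neq 0$ once $c \geq 1$, so $\mathcal{J}(cD) \subsetneq \mathcal{J}(c|L|)$. This is precisely the point of the averaging trick in \cite{lazarsfeld04} Proposition 9.2.26, which is the proof the paper intends (it states only that the argument is "exactly the same" as in the $\mathbb{Q}$-divisor case). The correct choice is: fix an integer $k > c$, take $k$ general members $A_{1},\ldots,A_{k} \in |\lfloor L \rfloor|$, and set
\begin{equation*}
D = \{L\} + \frac{1}{k}(A_{1} + \cdots + A_{k}),
\end{equation*}
which is effective and satisfies $kD - kL = \sum_{i}(A_{i} - \lfloor L \rfloor) = \sum_{i}\mathrm{div}(g_{i})$, hence $D \sim_{\mathbb{Q}} L$. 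On $Y$ one has $f^{*}D = f^{*}\{L\} + F + \frac{1}{k}\sum_{i} M_{i}$ with the $M_{i}$ general in the free series $|M|$; since $c/k < 1$ and the $M_{i}$ are in general position with respect to the exceptional locus, $F$, and $f^{*}\{L\}$ (so that $f$ remains a log resolution of $D$), the $M_{i}$ drop out of the round-down and $\lfloor cf^{*}D \rfloor = \lfloor cF + cf^{*}\{L\} \rfloor$, giving $\mathcal{J}(cD) = \mathcal{J}(c|L|)$. Your framework of computing everything on the fixed simultaneous log resolution is right; what is missing is the Koll\'ar--Bertini averaging with $k > c$, and what should be discarded is the rational approximation of coefficients.
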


\begin{lem}[\cite{lazarsfeld04} Lemma 11.1.1] \label{asymptoticideallemma}
Let $X$ be a smooth variety, $L$ be an $\mathbb{R}$-divisor with $L \geq 0$, and $c \in \mathbb{R}_{>0}$.  Then for any integer $k$ we have
\begin{equation*}
\mathcal{J}(c|L|) \subset \mathcal{J}\left(\frac{c}{k}|kL| \right).
\end{equation*}
\end{lem}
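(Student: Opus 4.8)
The plan is to reduce to the statement for $\mathbb{Q}$-divisors via Lemma \ref{linearidealqequiv} and then invoke the standard subadditivity/superadditivity of linear-series multiplier ideals from \cite{lazarsfeld04}. First I would apply Lemma \ref{linearidealqequiv} to $cL$: there is an effective divisor $D \sim_{\mathbb{Q}} L$ (we may scale so that it is $D \sim_\mathbb{Q} L$, clearing denominators appropriately) with $\mathcal{J}(c|L|) = \mathcal{J}(cD)$. Since $\mathcal{J}(cD)$ depends only on the $\mathbb{Q}$-linear class of $D$, I would choose $D$ to be a genuine $\mathbb{Q}$-divisor and work on a common log resolution $f \colon Y \to X$ of $L$, $kL$, $D$, and the relevant base ideals.

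Next I would unwind the definition of $\mathcal{J}\bigl(\tfrac{c}{k}|kL|\bigr)$. Writing $f^{*}(kL) \sim M' + F' + f^{*}\{kL\}$ with $M'$ basepoint free and $F'$ the fixed part of $|f^{*}\lfloor kL\rfloor|$, and similarly $f^{*}L \sim M + F + f^{*}\{L\}$, the key comparison is that $F' \le k(F + f^{*}\{L\})$ as divisors on $Y$ (the fixed part of the $k$-th system is at most $k$ times the "movable-plus-fractional" part of the first, because $k$ copies of a section of $L$ give a section of $kL$). Granting this, we get
\begin{equation*}
\Bigl\lfloor \tfrac{c}{k} F' + \tfrac{c}{k} f^{*}\{kL\} \Bigr\rfloor \le \bigl\lfloor c F + c f^{*}\{L\} \bigr\rfloor
\end{equation*}
up to the usual care about round-downs, whence
\begin{equation*}
\mathcal{J}(c|L|) = f_{*}\mathcal{O}_{Y}\bigl(K_{Y/X} - \lfloor cF + c f^{*}\{L\}\rfloor\bigr) \subset f_{*}\mathcal{O}_{Y}\bigl(K_{Y/X} - \lfloor \tfrac{c}{k}F' + \tfrac{c}{k}f^{*}\{kL\}\rfloor\bigr) = \mathcal{J}\bigl(\tfrac{c}{k}|kL|\bigr),
\end{equation*}
which is the claim. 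Alternatively, and perhaps more cleanly, I would simply note that Lemma 11.1.1 of \cite{lazarsfeld04} is stated and proved for graded linear series using only formal properties of the fixed and movable parts, and the passage from $\mathbb{Q}$- to $\mathbb{R}$-coefficients introduces only the harmless extra term $f^{*}\{L\}$ (respectively $\tfrac1k f^*\{kL\}$), so the verbatim argument applies; the parenthetical citation in the statement signals exactly this.

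The main obstacle is bookkeeping with the round-downs and the fractional parts: $\{kL\}$ is not simply $k\{L\}$, so the inequality $\lfloor \tfrac{c}{k}F' + \tfrac{c}{k}f^{*}\{kL\}\rfloor \le \lfloor cF + cf^{*}\{L\}\rfloor$ must be checked componentwise, using that $F' + f^{*}\lfloor kL\rfloor$ and $k(F + f^{*}\lfloor L\rfloor) + f^*\{kL\} - f^*(k\{L\})$ sit in the same linear system pushed forward, i.e.\ comparing the base loci of $|f^{*}\lfloor kL\rfloor|$ and $|kf^{*}\lfloor L\rfloor|$ and absorbing the difference of fractional parts into an effective correction. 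Once that componentwise inequality is in hand the rest is immediate from the monotonicity $\lfloor A \rfloor \le \lfloor B\rfloor$ when $A \le B$ and the definition of $\mathcal{J}(c|\cdot|)$. I expect this to be routine enough that, as the statement's phrasing suggests, one can simply say it "can be proved in exactly the same way as for $\mathbb{Q}$-divisors."
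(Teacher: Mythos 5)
Your direct argument is correct and is exactly what the paper intends: the paper gives no proof, saying only that the lemma "can be proved in exactly the same way as for $\mathbb{Q}$-divisors," and your comparison of fixed parts on a common log resolution, with the needed inequality $F' + f^{*}\{kL\} \le kF + kf^{*}\{L\}$ following from $F' \le kF + f^{*}(\lfloor kL\rfloor - k\lfloor L\rfloor)$ (the $k$-th power of a section of $\lfloor L\rfloor$ times the canonical section of the effective divisor $\lfloor kL\rfloor - k\lfloor L\rfloor$ is a section of $\lfloor kL\rfloor$), is that argument. The opening detour through Lemma \ref{linearidealqequiv} is dispensable and slightly off --- for irrational $L$ the divisor $D \sim_{\mathbb{Q}} L$ it produces still carries the irrational fractional part $\{L\}$, so it cannot be "a genuine $\mathbb{Q}$-divisor" --- but since you abandon it for the direct computation, nothing is lost.
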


These two lemmas allow us to define the asymptotic multiplier ideal for $\mathbb{R}$-divisors in the same way as for $\mathbb{Q}$-divisors.

\begin{defn} \label{asymmultidealdef}
Let $X$ be a smooth variety and let $L$ be an $\mathbb{R}$-divisor with $\kappa(L) \geq 0$.  Fix $c \in \mathbb{R}_{>0}$.  Lemmas \ref{linearidealqequiv} and \ref{asymptoticideallemma} shows that the set $\{ \mathcal{J}(cD) | D \sim_{\mathbb{Q}} L, D \geq 0 \}$ is a directed set under the containment relation.  Since the underlying rings are Noetherian, there is a unique maximal element which we define to be $\mathcal{J}(c \Vert L \Vert)$.  We also will use the following convention.
\end{defn}

\begin{conv}
If $\kappa(L) = -\infty$, we set $\mathcal{J}(c \Vert L \Vert) = 0$.
\end{conv}

Note that Definition \ref{asymmultidealdef} reduces to the usual one when $L$ is a $\mathbb{Q}$-divisor.

\begin{defn}
Let $L$ be an $\mathbb{R}$-divisor with $\kappa(L) \geq 0$ and fix $c \in \mathbb{R}_{>0}$.  Definition \ref{asymmultidealdef} indicates that for some effective $D \sim_{\mathbb{Q}} L$ we have
\begin{equation*}
\mathcal{J}(c \Vert L \Vert) = \mathcal{J}(cD).
\end{equation*}
We say that $D \sim_{\mathbb{Q}} L$ computes the asymptotic multiplier ideal of $L$.
\end{defn}

One readily verifies that asymptotic multiplier ideals of $\mathbb{R}$-divisors have many of the same basic properties as in the $\mathbb{Q}$-divisor case.  We will need the following version of Nadel vanishing.

\begin{thrm} [\cite{lazarsfeld04} Remark 9.4.14]
Let $X$ be a smooth variety, let $L$ be an $\mathbb{R}$-divisor with $\kappa(L) \geq 0$, and let $D$ be an integral divisor such that $D - L$ is big and nef.  Then
\begin{equation*}
H^{i}(X,\mathcal{O}_{X}(K_{X} + D) \otimes \mathcal{J}(\Vert L \Vert)) = 0
\end{equation*}
for every $i>0$.
\end{thrm}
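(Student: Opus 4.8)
The plan is to reduce the statement to its classical $\mathbb{Q}$-divisor version via Kawamata--Viehweg vanishing. By Definition \ref{asymmultidealdef} there is an effective $\mathbb{R}$-divisor $D' \sim_{\mathbb{Q}} L$ computing the asymptotic multiplier ideal, so that $\mathcal{J}(\Vert L \Vert) = \mathcal{J}(D')$; it therefore suffices to show $H^{i}(X, \mathcal{O}_{X}(K_{X}+D) \otimes \mathcal{J}(D')) = 0$ for $i>0$. The key point is that this substitution does not disturb the positivity hypothesis: since $D' \sim_{\mathbb{Q}} L$ we have $D - D' \sim_{\mathbb{Q}} D - L$, so $D - D'$ is numerically equivalent to a big and nef divisor and hence is itself big and nef (bigness and nefness depend only on the numerical class).

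Next I would fix a log resolution $\phi\colon Y \to X$ of $D'$, so that by definition $\mathcal{J}(D') = \phi_{*}\mathcal{O}_{Y}(K_{Y/X} - \lfloor \phi^{*}D' \rfloor)$ and $\Supp(\phi^{*}D')$ together with the exceptional locus of $\phi$ forms a simple normal crossing divisor. The local vanishing statement $R^{j}\phi_{*}\mathcal{O}_{Y}(K_{Y/X} - \lfloor \phi^{*}D' \rfloor) = 0$ for $j>0$ holds exactly as in the $\mathbb{Q}$-divisor case, being an instance of relative Kawamata--Viehweg vanishing on $Y$. Applying the projection formula for the line bundle $\mathcal{O}_{X}(K_{X}+D)$ and using $K_{Y/X} + \phi^{*}K_{X} = K_{Y}$, one finds both $\mathcal{O}_{X}(K_{X}+D) \otimes \mathcal{J}(D') = \phi_{*}\mathcal{O}_{Y}(K_{Y} + \phi^{*}D - \lfloor \phi^{*}D'\rfloor)$ and $R^{j}\phi_{*}\mathcal{O}_{Y}(K_{Y} + \phi^{*}D - \lfloor \phi^{*}D'\rfloor) = 0$ for $j>0$, so the Leray spectral sequence yields
\begin{equation*}
H^{i}\big(X, \mathcal{O}_{X}(K_{X}+D) \otimes \mathcal{J}(D')\big) \cong H^{i}\big(Y, \mathcal{O}_{Y}(K_{Y} + \phi^{*}D - \lfloor \phi^{*}D'\rfloor)\big)
\end{equation*}
for all $i$.

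It remains to show the right-hand side vanishes for $i>0$. Writing $\phi^{*}D - \lfloor \phi^{*}D' \rfloor = \phi^{*}(D - D') + \{\phi^{*}D'\}$, the first summand is the pullback of a big and nef divisor under a birational morphism, hence big and nef on $Y$, while the second is an effective $\mathbb{R}$-divisor supported on a simple normal crossing divisor with all coefficients in $[0,1)$. Thus $\phi^{*}D - \lfloor \phi^{*}D'\rfloor$ is an integral divisor expressed as a big and nef $\mathbb{R}$-divisor plus a simple normal crossing boundary, and Kawamata--Viehweg vanishing for $\mathbb{R}$-divisors (which follows from the rational version in the usual way) gives the desired vanishing. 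I do not anticipate a genuine obstacle: the argument is the standard proof of Nadel vanishing, and the only things that must be supplied by the theory of this section are the existence of the computing divisor $D'$, already furnished by Definition \ref{asymmultidealdef}, and the routine $\mathbb{R}$-divisor forms of local vanishing and Kawamata--Viehweg vanishing. The one point worth stating carefully is the numerical invariance used to transfer the hypothesis from $D - L$ to $D - D'$.
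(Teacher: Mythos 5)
Your proposal is correct and follows essentially the same route as the paper: replace $L$ by an effective $D' \sim_{\mathbb{Q}} L$ computing the asymptotic multiplier ideal, observe that $D - D'$ is still big and nef since these are numerical properties, and invoke the $\mathbb{R}$-divisor form of Nadel vanishing. The paper simply cites that last step as a black box, whereas you unpack it via log resolution, local vanishing, and Kawamata--Viehweg; the substance is the same.
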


\begin{proof}
Let $L' \sim_{\mathbb{Q}} L$ be a divisor computing the asymptotic multiplier ideal for $L$.  Since $D - L'$ is also big and nef, we may apply the $\mathbb{R}$-divisor version of Nadel vanishing to $D$ and $L'$ to conclude.
\end{proof}

\subsection{Graded sequences of ideals}

Our next goal is to make use of the machinery of graded sequences of ideals to study asymptotic multiplier ideals for $\mathbb{R}$-divisors.  Recall that for a non-zero graded sequence of ideals $\mathfrak{a}_{\bullet}$ the multiplier ideal $\mathcal{J}(c \mathfrak{a}_{\bullet})$ is defined to be the unique maximal element of the set $\{ \mathcal{J}(\frac{c}{m} \cdot \mathfrak{a}_{m}) \}_{m \in \mathbb{Z}_{>0}}$.   Similarly, for $v \in \Val(X)$ the asymptotic valuation $v(\mathfrak{a}_{\bullet})$ is defined to be
\begin{equation*}
v(\mathfrak{a}_{\bullet}) := \liminf_{m \in \mathbb{Z}_{>0}} \frac{v(\mathfrak{a}_{m})}{m}.
\end{equation*}
Finally, given a non-zero ideal $\mathfrak{q}$, the $\mathfrak{q}$-log canonical threshold of a graded sequence of ideals $\mathfrak{a}_{\bullet}$ is defined to be
\begin{equation*}
\lct^{\mathfrak{q}}(\mathfrak{a}_{\bullet}) := \inf_{v \in \Val^{*}(X)} \frac{A(v) + v(\mathfrak{q})}{v(\mathfrak{a}_{\bullet})}.
\end{equation*}
Note that $\lct^{\mathfrak{q}}$ takes values in $\mathbb{R}_{\geq 0} \cup \{ \infty \}$, with the value $\infty$ occuring only when some $\mathfrak{a}_{m} = \mathcal{O}_{X}$.  The $\mathfrak{q}$-log canonical thresholds are closely tied to multiplier ideals: \cite{jm10} Corollary 2.16 checks that $\mathfrak{q} \subset \mathcal{J}(c \mathfrak{a}_{\bullet})$ iff $\lct^{\mathfrak{q}}(\mathfrak{a}_{\bullet}) > c$.

Given an effective $\mathbb{R}$-divisor $L$, define
\begin{equation*}
\mathfrak{a}_{m} = \mathfrak{b}(|\lfloor mL \rfloor|) \cdot \mathcal{O}_{X}(-\lceil \{ mL \} \rceil).
\end{equation*}

\begin{prop}
The ideals $\{ \mathfrak{a}_{\bullet} \}$ form a graded sequence with
\begin{enumerate}
\item $v(\mathfrak{a}_{\bullet}) = v(\Vert L \Vert)$ for any valuation $v \in \Val(X)$ and
\item $\mathcal{J}( \mathfrak{a}_{\bullet}) = \mathcal{J}(\Vert L \Vert)$.
\end{enumerate}
\end{prop}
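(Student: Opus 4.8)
The plan is to reduce everything to the corresponding statements for $\mathbb{Q}$-divisors, which are already known, by relating the graded sequence $\mathfrak{a}_\bullet$ to the familiar ones coming from rational multiples of $L$.  First I would verify the multiplicativity $\mathfrak{a}_p \cdot \mathfrak{a}_q \subset \mathfrak{a}_{p+q}$: the inclusion $\mathfrak{b}(|\lfloor pL\rfloor|)\cdot\mathfrak{b}(|\lfloor qL\rfloor|)\subset \mathfrak{b}(|\lfloor pL\rfloor+\lfloor qL\rfloor|)\subset\mathfrak{b}(|\lfloor (p+q)L\rfloor|)$ is standard (using $\lfloor pL\rfloor+\lfloor qL\rfloor\le\lfloor(p+q)L\rfloor$ as effective divisors, so one linear system maps into the other), and for the twisting factors one checks $\lceil\{pL\}\rceil+\lceil\{qL\}\rceil\ge\lceil\{(p+q)L\}\rceil$ componentwise, so $\mathcal{O}_X(-\lceil\{pL\}\rceil)\cdot\mathcal{O}_X(-\lceil\{qL\}\rceil)\subset\mathcal{O}_X(-\lceil\{(p+q)L\}\rceil)$.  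That the sequence is nonzero when $\kappa(L)\ge0$ is clear since $\mathfrak{a}_m\ne 0$ for $m$ with $|\lfloor mL\rfloor|\ne\emptyset$ (if some intermediate $\mathfrak{a}_m$ vanishes one passes to the subsequence of indices where it is nonzero, which still computes the same asymptotic invariants).

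For part (1), fix $v\in\Val(X)$.  On a common log resolution $f\colon Y\to X$ resolving $|\lfloor mL\rfloor|$, write $f^*\lfloor mL\rfloor\sim M_m+F_m$ with $M_m$ basepoint free and $F_m$ the fixed divisor, so that $v(\mathfrak{a}_m)=v(F_m)+v(\lceil\{mL\}\rceil)$ where I extend $v$ to the $\mathbb{R}$-divisor $\{mL\}$ by $\mathbb{R}$-linearity.  The point is that $v(F_m)=\min\{v(D) : D\in|f^*\lfloor mL\rfloor|\}$ while $\min\{v(D'):D'\sim_\mathbb{Q} mL, D'\ge0\}$ differs from this by at most $v(\lceil\{mL\}\rceil)\le \sum v(\Gamma)$, a bounded quantity independent of $m$.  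Dividing by $m$ and taking $\liminf$, the bounded correction disappears and one gets $v(\mathfrak{a}_\bullet)=\liminf_m\frac1m\min\{v(D):D\in|mL|\}=v(\Vert L\Vert)$, using the definition of $v(\Vert L\Vert)$ for $\mathbb{R}$-divisors and that $|mL|$ is exactly the set of effective divisors linearly equivalent to $mL$ (which for $\mathbb{R}$-divisors amounts to $|\lfloor mL\rfloor|$ plus the fractional part).

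For part (2), I would argue that $\mathcal{J}(\mathfrak{a}_\bullet)=\mathcal J(\frac1m\mathfrak a_m)$ for $m$ sufficiently divisible, and separately that $\mathcal J(\Vert L\Vert)=\mathcal J(\frac1m|mL|)$ for $m$ sufficiently divisible by Definition \ref{asymmultidealdef} together with Lemma \ref{asymptoticideallemma}; then it suffices to match these two ideals for a single large $m$.  Unwinding the definitions on the log resolution $f$, $\mathcal J(\frac1m\mathfrak a_m)=f_*\mathcal O_Y(K_{Y/X}-\lfloor\frac1m(F_m+f^*\lceil\{mL\}\rceil)\rfloor)$ while $\mathcal J(\frac1m|mL|)=\mathcal J(\frac1m|\lfloor mL\rfloor|)$ computed the same way but with $f^*\{mL\}$ in place of $f^*\lceil\{mL\}\rceil$; by Lemma \ref{linearidealqequiv} this equals $\mathcal J(\frac1m D)$ for a suitable effective $D\sim_\mathbb{Q} mL$.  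The discrepancy between $\lceil\{mL\}\rceil$ and $\{mL\}$ contributes $\lfloor\frac1m\cdot(\text{bounded divisor})\rfloor$, which is zero once $m$ exceeds the coefficients involved, so the two round-downs agree and the ideals coincide.  Alternatively, and perhaps more cleanly, I would invoke the criterion $\mathfrak q\subset\mathcal J(c\mathfrak a_\bullet)\iff\mathrm{lct}^{\mathfrak q}(\mathfrak a_\bullet)>c$ from \cite{jm10} Corollary 2.16 together with part (1): since $v(\mathfrak a_\bullet)=v(\Vert L\Vert)$ for all $v$, the two graded sequences have identical log canonical thresholds against every $\mathfrak q$, hence identical multiplier ideals, and one checks the $\mathbb{Q}$-divisor graded sequence $\mathfrak b(|\lfloor mL\rfloor|)$-type construction has multiplier ideal $\mathcal J(\Vert L\Vert)$ by the analogous classical statement.

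The main obstacle is the bookkeeping around the fractional parts: one must be careful that $\lceil\{mL\}\rceil$ is a genuinely effective integral divisor with coefficients in $\{0,1\}$ on each component, that pulling it back and rounding down after dividing by $m$ kills it, and that the comparison between "$D\in|mL|$" (effective, $\mathbb{R}$-linearly equivalent) and "$D'\sim_\mathbb{Q} mL$" used in the definition of $v(\Vert L\Vert)$ and $\mathcal J(\Vert L\Vert)$ only costs the bounded fractional-part term.  Once that is pinned down, both parts follow formally from the $\mathbb{Q}$-divisor theory and the valuation criterion for multiplier-ideal membership; I expect the valuation-criterion route (deducing (2) from (1) via \cite{jm10} Corollary 2.16) to be the shortest and least error-prone way to write it.
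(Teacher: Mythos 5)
Your overall strategy is the same as the paper's (check multiplicativity via the fractional parts, prove (1) by a bounded sandwich, prove (2) by comparison with the $\mathbb{Q}$-divisor computation), and your part (1) is essentially identical to the paper's argument and correct. But there are three genuine problems. First, in the multiplicativity step the inclusion $\mathfrak{b}(|\lfloor pL\rfloor + \lfloor qL\rfloor|) \subset \mathfrak{b}(|\lfloor(p+q)L\rfloor|)$ is false as stated: adding an effective divisor $E$ to a linear system gives $\mathfrak{b}(|D+E|) \supseteq \mathfrak{b}(|D|)\cdot\mathcal{O}_X(-E)$, not $\mathfrak{b}(|D+E|) \supseteq \mathfrak{b}(|D|)$ (take $D=0$ and $E$ a rigid curve). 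Since $\lfloor(p+q)L\rfloor = \lfloor pL\rfloor + \lfloor qL\rfloor + \lfloor\{pL\}+\{qL\}\rfloor$, the base-ideal comparison costs you a twist by $\mathcal{O}_X(-\lfloor\{pL\}+\{qL\}\rfloor)$, and you must pay for it with the surplus in the ceiling terms; i.e.\ you need the single combined inequality $\lceil\{pL\}\rceil+\lceil\{qL\}\rceil - \lfloor\{pL\}+\{qL\}\rfloor \geq \lceil\{(p+q)L\}\rceil$, which is exactly what the paper verifies. Your factorization into two independent inclusions does not work, even though you have the right fractional-part inequality in hand.

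Second, in your primary route for (2), the claim that the two round-downs $\lfloor\frac1m(F_m+f^*\lceil\{mL\}\rceil)\rfloor$ and $\lfloor\frac1m(F_m+f^*\{mL\})\rfloor$ agree once $m$ is large is not justified: the fixed divisors $F_m$ grow with $m$, and the coefficients of $\frac1m F_m + \frac1m f^*\{mL\}$ can sit within $\frac1m(f^*D)_E$ below an integer for infinitely many $m$, so the $O(1/m)$ perturbation can change the round-down at exactly the $m$ you want to use. The paper sidesteps this by first fixing a single $L'\sim_{\mathbb{Q}}L$ that computes $\mathcal{J}(\Vert L\Vert)$, choosing $\epsilon$ adapted to that fixed divisor so that $\mathcal{J}(L'+\epsilon D)=\mathcal{J}(L')$, and only then taking $m>1/\epsilon$ and using monotonicity of the directed system. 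Third, your ``cleaner'' alternative via $\lct^{\mathfrak{q}}$ is circular: it presupposes that $\mathcal{J}(\Vert L\Vert)$ --- which for an $\mathbb{R}$-divisor is \emph{defined} in this paper as the maximal element of $\{\mathcal{J}(cD)\mid D\sim_{\mathbb{Q}}L,\ D\geq 0\}$, not as a graded-sequence multiplier ideal --- is already known to be the multiplier ideal of some graded sequence with asymptotic valuations $v(\Vert L\Vert)$. That identification is precisely the content of the Proposition, and the valuative criterion for $\mathcal{J}(\Vert L\Vert)$ (Theorem \ref{qvaluativedescription}) is deduced from it afterwards, so it cannot be invoked here.
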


\begin{proof}
We first must check that the $\mathfrak{a}_{\bullet}$ form a graded sequence.  Fix integers $i,j>0$.  Note that there is a natural injection
\begin{equation*}
\mathfrak{b}(|\lfloor iL \rfloor|)  \mathfrak{b}(| \lfloor jL \rfloor |) \cdot \mathcal{O}_{X}(-\lfloor \{iL\} + \{jL\} \rfloor) \to \mathfrak{b}(| \lfloor (i+j)L \rfloor |)
\end{equation*}
and that
\begin{align*}
\lceil \{iL\}  \rceil + \lceil \{jL\} \rceil - \lfloor \{ iL \} + \{ jL \} \rfloor  & \geq  \lceil \{iL\} + \{jL\} \rceil - \lfloor \{ iL \} + \{ jL \} \rfloor \\
& = \lceil \{ (i+j) L \} \rceil.
\end{align*}
Combining the two statements yields the desired inclusion $\mathfrak{a}_{i}  \mathfrak{a}_{j} \subset \mathfrak{a}_{i+j}$.

We next prove (1).  Set $D = \lceil L \rceil - \lfloor L \rfloor$.  Since $v(\mathfrak{a}_{m}) = v(|\lfloor mL \rfloor|) + v(\lceil \{ mL \} \rceil)$ we have
\begin{equation*}
v(|mL|) \leq v(\mathfrak{a}_{m}) \leq v(|mL|) + v(D).
\end{equation*}
Dividing by $m$ and taking the liminf demonstrates the equality.

Finally, we prove (2).  As $\lceil \{ mL \} \rceil \geq \{ mL \}$, it is clear that $\mathcal{J}(c |mL|) \supset \mathcal{J}(c \cdot \mathfrak{a}_{m})$, and thus $\mathcal{J}(\Vert L \Vert) \supset \mathcal{J}(\mathfrak{a}_{\bullet})$.  Conversely, as before define $D = \lceil L \rceil - \lfloor L \rfloor$.  Suppose that $L' \in \frac{1}{p}|pL|$ computes the asymptotic multiplier ideal of $L$.  Choose $\epsilon$ sufficiently small so that $\mathcal{J}(L' + \epsilon D) = \mathcal{J}(L')$.  Then for $m > \frac{1}{\epsilon}$ we have
\begin{align*}
\mathcal{J}\left(\frac{1}{pm} \mathfrak{a}_{pm} \right) & \supset \mathcal{J}\left(L' + \frac{1}{pm} D\right) \\
& = \mathcal{J}(\Vert L \Vert).
\end{align*}
\end{proof}

We may now apply the results of \cite{jm10} to $\mathbb{R}$-divisors.  The following is an immediate consequence of \cite{jm10} Corollary 6.4.

\begin{prop} \label{gradedcontinuity}
Let $L$ be an $\mathbb{R}$-divisor on $X$ with $\kappa(L) \geq 0$.  Then the function $v \mapsto v(\Vert L \Vert)$ is continuous on $\{ v \in \Val_{X} | A(v) < \infty \}$.
\end{prop}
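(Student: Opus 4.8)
The plan is to deduce this directly from the corresponding continuity statement for graded sequences of ideals, using the graded sequence attached to $L$ in the previous proposition. Recall that proposition produces the graded sequence $\mathfrak{a}_{\bullet}$ with $\mathfrak{a}_{m} = \mathfrak{b}(|\lfloor mL \rfloor|) \cdot \mathcal{O}_{X}(-\lceil \{ mL \} \rceil)$ and shows $v(\mathfrak{a}_{\bullet}) = v(\Vert L \Vert)$ for every $v \in \Val(X)$. Hence it suffices to know that $v \mapsto v(\mathfrak{a}_{\bullet})$ is continuous on $\{ v \in \Val_{X} \mid A(v) < \infty \}$, which is a purely algebraic assertion about graded sequences of ideals.

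Concretely, I would first check that $\mathfrak{a}_{\bullet}$ is a non-zero graded sequence: since $\kappa(L) \geq 0$, there is some positive integer $m$ with $|\lfloor mL \rfloor| \neq \emptyset$, so $\mathfrak{b}(|\lfloor mL \rfloor|) \neq 0$, and since the twist by $\mathcal{O}_{X}(-\lceil \{ mL \} \rceil)$ is a non-zero ideal we get $\mathfrak{a}_{m} \neq 0$. Then \cite{jm10} Corollary 6.4, applied to $\mathfrak{a}_{\bullet}$, says that the asymptotic valuation function $v \mapsto v(\mathfrak{a}_{\bullet})$ is continuous on $\{ v \in \Val_{X} \mid A(v) < \infty \}$. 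Composing this with the equality $v(\mathfrak{a}_{\bullet}) = v(\Vert L \Vert)$ from the previous proposition gives the continuity of $v \mapsto v(\Vert L \Vert)$ on the same locus, which is exactly the claim.

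I do not expect any genuine obstacle here: the substantive input is \cite{jm10} Corollary 6.4, and the only real "work" on our side was already done in setting up the ideals $\mathfrak{a}_{\bullet}$ and checking $v(\mathfrak{a}_{\bullet}) = v(\Vert L \Vert)$ in the preceding proposition. The one point that warrants a moment's care is simply the non-vanishing of the graded sequence, which is where the hypothesis $\kappa(L) \geq 0$ enters; everything else is a direct translation between the $\mathbb{R}$-divisor picture and the graded-sequence picture.
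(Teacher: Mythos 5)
Your argument is exactly the paper's: the proposition is stated there as an immediate consequence of \cite{jm10} Corollary 6.4 applied to the graded sequence $\mathfrak{a}_{\bullet}$ constructed in the preceding proposition, using the already-established equality $v(\mathfrak{a}_{\bullet}) = v(\Vert L \Vert)$. Your additional remark on non-vanishing of $\mathfrak{a}_{\bullet}$ via $\kappa(L) \geq 0$ is a correct and harmless elaboration.
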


Another consequence of \cite{jm10} is that the log canonical threshold of the asymptotic multiplier ideal of an $\mathbb{R}$-divisor can be computed using asymptotic valuations.

\begin{thrm}[\cite{jm10}, Theorem 7.3] \label{qvaluativedescription}
Let $L$ be a pseudo-effective $\mathbb{R}$-divisor on $X$ with $\kappa(L) \geq 0$.  Fix an ideal sheaf $\mathfrak{q}$ on $X$.  We have $\mathfrak{q} \subset \mathcal{J}(\Vert L \Vert)$ iff
\begin{equation*}
v(\mathfrak{q}) > v(\Vert L \Vert) - A(v)
\end{equation*}
for every $v \in \Val^{*}(X)$.
\end{thrm}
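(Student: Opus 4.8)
The plan is to recast the statement in the language of graded sequences of ideals and then invoke the machinery of \cite{jm10}. First I would reduce to the case that $L$ is an effective $\mathbb{Q}$-divisor: since $\kappa(L) \geq 0$ there is an effective $\mathbb{Q}$-divisor $L_{0} \sim_{\mathbb{Q}} L$, the ideal $\mathcal{J}(\Vert L \Vert)$ depends only on the $\mathbb{Q}$-linear equivalence class by Definition \ref{asymmultidealdef}, and $v(\Vert L \Vert)$ likewise does (after passing to an integral multiple this follows from Corollary \ref{rescaleval}). So I may assume $L = L_{0} \geq 0$ and pass to the graded sequence $\mathfrak{a}_{m} = \mathfrak{b}(|\lfloor mL \rfloor|) \cdot \mathcal{O}_{X}(-\lceil \{ mL \} \rceil)$ of the Proposition above, which gives $\mathcal{J}(\Vert L \Vert) = \mathcal{J}(\mathfrak{a}_{\bullet})$ and $v(\mathfrak{a}_{\bullet}) = v(\Vert L \Vert)$ for every $v \in \Val(X)$.

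Next I would apply \cite{jm10} Corollary 2.16 with $c = 1$, which says $\mathfrak{q} \subset \mathcal{J}(\Vert L \Vert)$ if and only if $\lct^{\mathfrak{q}}(\mathfrak{a}_{\bullet}) > 1$. Unwinding the definition of the $\mathfrak{q}$-log canonical threshold and substituting $v(\mathfrak{a}_{\bullet}) = v(\Vert L \Vert)$, the theorem reduces to the equivalence
\begin{equation*}
\inf_{v \in \Val^{*}(X)} \frac{A(v) + v(\mathfrak{q})}{v(\Vert L \Vert)} > 1 \qquad \Longleftrightarrow \qquad A(v) + v(\mathfrak{q}) > v(\Vert L \Vert) \text{ for every } v \in \Val^{*}(X),
\end{equation*}
the right-hand side being exactly the claimed inequality $v(\mathfrak{q}) > v(\Vert L \Vert) - A(v)$. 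One implication is purely formal: if the infimum exceeds $1$ then so does every individual ratio, giving the pointwise inequality (with the convention that the ratio is $+\infty$ when $v(\Vert L \Vert) = 0$, where one uses that $A(v) > 0$ on $\Val^{*}(X)$ since $X$ is smooth).

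The reverse implication carries the real content. The obstacle is that $v \mapsto A(v) + v(\mathfrak{q})$ is only lower semicontinuous and $v \mapsto v(\Vert L \Vert)$ only upper semicontinuous on $\Val^{*}(X)$, so a pointwise strict inequality between them need not be uniform and need not force the infimum of the ratio to be bounded away from $1$. To close this gap I would invoke what is essentially a central result of \cite{jm10}: when $\lct^{\mathfrak{q}}(\mathfrak{a}_{\bullet})$ is finite it is \emph{computed} by some quasi-monomial valuation $v_{0} \in \Val^{*}(X)$, so the infimum is attained; then the pointwise inequality at $v_{0}$ forces $\lct^{\mathfrak{q}}(\mathfrak{a}_{\bullet}) = (A(v_{0}) + v_{0}(\mathfrak{q}))/v_{0}(\Vert L \Vert) > 1$. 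If instead $\lct^{\mathfrak{q}}(\mathfrak{a}_{\bullet}) = \infty$ — which by the remarks after its definition happens only when some $\mathfrak{a}_{m} = \mathcal{O}_{X}$, in which case $\mathcal{J}(\Vert L \Vert) = \mathcal{O}_{X}$ and $v(\Vert L \Vert) \equiv 0$ — both sides hold trivially. I expect this attainment statement (the existence of a valuation computing the threshold, which in \cite{jm10} rests on the structure of $\Val(X)$ and the retractions $r_{Y,D}$) to be the only genuinely hard ingredient, and it is imported wholesale; alternatively, once one has reduced to the $\mathbb{Q}$-divisor $L_{0}$, one may simply quote \cite{jm10} Theorem 7.3 directly and skip the graded-sequence discussion.
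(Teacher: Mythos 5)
Your overall route is exactly the paper's: reduce to an effective divisor, replace $L$ by the graded sequence $\mathfrak{a}_{m} = \mathfrak{b}(|\lfloor mL \rfloor|)\cdot\mathcal{O}_{X}(-\lceil\{mL\}\rceil)$, use \cite{jm10} Corollary 2.16 to translate membership in $\mathcal{J}(\Vert L\Vert)=\mathcal{J}(\mathfrak{a}_{\bullet})$ into $\lct^{\mathfrak{q}}(\mathfrak{a}_{\bullet})>1$, and then use the fact (\cite{jm10} Theorem 7.3) that this threshold is \emph{computed} by some $v_{0}\in\Val^{*}(X)$ to pass from the pointwise inequality to the strict bound on the infimum. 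You correctly identify the attainment statement as the one nontrivial imported ingredient; the paper's proof is the same three lines, just without spelling out the semicontinuity issue.

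There is, however, one false step in your opening reduction: it is not true that $\kappa(L)\geq 0$ produces an effective $\mathbb{Q}$-divisor $L_{0}\sim_{\mathbb{Q}}L$. Take $L=\sqrt{2}\,H$ for $H$ an effective ample Cartier divisor: then $\kappa(L)\geq 0$, but any $L_{0}\sim_{\mathbb{Q}}L$ has $c_{1}(L_{0})=\sqrt{2}\,c_{1}(H)$, which is an irrational class, so $L_{0}$ cannot be a $\mathbb{Q}$-divisor. What $\kappa(L)\geq 0$ does give is an effective divisor $D$ with $mL\sim D$ for some integer $m$, hence an effective $\mathbb{R}$-divisor $\tfrac{1}{m}D\sim_{\mathbb{Q}}L$; so the correct (and sufficient) reduction is to $L$ effective as an $\mathbb{R}$-divisor, which is all the paper does. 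This costs you nothing, since the Proposition you invoke constructs $\mathfrak{a}_{\bullet}$ and proves $\mathcal{J}(\mathfrak{a}_{\bullet})=\mathcal{J}(\Vert L\Vert)$ and $v(\mathfrak{a}_{\bullet})=v(\Vert L\Vert)$ for an arbitrary effective $\mathbb{R}$-divisor. The same error undercuts your closing "alternatively": quoting \cite{jm10} Theorem 7.3 directly in its $\mathbb{Q}$-divisor form does not cover the general $\mathbb{R}$-divisor statement, so the graded-sequence detour is not optional --- it is precisely the device that handles irrational $L$.
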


In particular, a regular function $f$ on a Zariski open subset $U \subset X$ lies in $\mathcal(\Vert L \Vert)$ iff $v(f) > v(\Vert L \Vert) - A(v)$ for every valuation $v \in \Val^{*}(X)$.

\begin{proof}
We may assume $L$ is effective and define $\mathfrak{a}_{\bullet}$ as above.  \cite{jm10} Theorem 7.3 shows that $\lct^{\mathfrak{q}}(\mathfrak{a}_{\bullet})$ is computed by some valuation in $\Val^{*}(X)$.  Thus the valuation inequality is equivalent to $\lct^{\mathfrak{q}}(\mathfrak{a}_{\bullet}) > 1$, i.e.~$\mathfrak{q} \subset \mathcal{J}(\Vert L \Vert)$.
\end{proof}

Using Theorem \ref{qvaluativedescription}, it is easy to check that the following well-known lemmas for $\mathbb{Q}$-divisors also hold for $\mathbb{R}$-divisors.

\begin{cor}
Let $L$ be a big $\mathbb{R}$-divisor and suppose that $L' \equiv L$.  Then for any $c>0$
\begin{equation*}
\mathcal{J}(c\Vert L \Vert) = \mathcal{J}(c \Vert L' \Vert).
\end{equation*}
\end{cor}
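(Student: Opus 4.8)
The plan is to translate the equality of ideals into an equality of the valuative criteria supplied by Theorem~\ref{qvaluativedescription}, and then to observe that these criteria coincide because asymptotic valuations are numerical invariants on the big cone (Theorem~\ref{bigconefunction}).

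First I would dispose of the constant $c$. Unwinding Definition~\ref{asymmultidealdef}, the rescaling $D \mapsto cD$ is a bijection between $\{\, D \sim_{\mathbb{Q}} L : D \geq 0 \,\}$ and $\{\, D' \sim_{\mathbb{Q}} cL : D' \geq 0 \,\}$ which carries $\mathcal{J}(cD)$ to $\mathcal{J}(D')$; hence the two directed sets, and therefore their unique maximal elements, agree, i.e.\ $\mathcal{J}(c\Vert L \Vert) = \mathcal{J}(\Vert cL \Vert)$. Since $cL \equiv cL'$ and both are big, it suffices to treat $c = 1$, so from now on I assume $L \equiv L'$ are big and compare $\mathcal{J}(\Vert L \Vert)$ with $\mathcal{J}(\Vert L' \Vert)$.

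Next I would apply the valuative description. As $L$ is big, $\kappa(L) = \dim X \geq 0$, so Theorem~\ref{qvaluativedescription} applies to both $L$ and $L'$: for any ideal sheaf $\mathfrak{q}$ on $X$, one has $\mathfrak{q} \subset \mathcal{J}(\Vert L \Vert)$ iff $v(\mathfrak{q}) > v(\Vert L \Vert) - A(v)$ for every $v \in \Val^{*}(X)$, and the same with $L$ replaced by $L'$. Theorem~\ref{bigconefunction} gives $v(\Vert L \Vert) = v(\Vert L' \Vert)$ for all $v \in \Val^{*}(X)$, since $L$ and $L'$ are numerically equivalent big divisors; hence the inequalities cutting out $\mathcal{J}(\Vert L \Vert)$ and $\mathcal{J}(\Vert L' \Vert)$ are literally the same. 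Taking $\mathfrak{q} = \mathcal{J}(\Vert L \Vert)$, which trivially satisfies its own criterion, yields $\mathcal{J}(\Vert L \Vert) \subset \mathcal{J}(\Vert L' \Vert)$, and the reverse inclusion follows by exchanging the roles of $L$ and $L'$.

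There is no real obstacle here: the entire content is packaged in Theorems~\ref{qvaluativedescription} and~\ref{bigconefunction}, and the work consists only in checking their hypotheses and performing the harmless rescaling. The one point meriting a moment's attention is that both the criterion of Theorem~\ref{qvaluativedescription} and the invariance statement of Theorem~\ref{bigconefunction} are phrased over $\Val^{*}(X)$ rather than all of $\Val(X)$; but this is the same index set in both, so the two conditions match exactly and the argument closes without a gap.
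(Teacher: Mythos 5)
Your proposal is correct and is essentially the paper's proof, which simply combines Theorem~\ref{qvaluativedescription} with Theorem~\ref{bigconefunction}; your extra care with the rescaling $\mathcal{J}(c\Vert L\Vert)=\mathcal{J}(\Vert cL\Vert)$ and with checking $\kappa(L)\geq 0$ just makes the same argument explicit.
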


\begin{proof}
Combine Theorem \ref{qvaluativedescription} and Theorem \ref{bigconefunction}.
\end{proof}

\begin{cor} \label{plusamplecontainment}
Let $L$ be a pseudo-effective $\mathbb{R}$-divisor and let $A$ be an ample $\mathbb{R}$-divisor.  Then for any $c>0$
\begin{equation*}
\mathcal{J}(c \Vert L \Vert) \subset \mathcal{J}(c \Vert L + A \Vert).
\end{equation*}
\end{cor}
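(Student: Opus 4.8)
The plan is to deduce the containment from the valuative description of asymptotic multiplier ideals (Theorem \ref{qvaluativedescription}) together with the fact, recorded in Lemma \ref{observation}, that adding an ample divisor cannot increase any asymptotic valuation.

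First I would dispose of the degenerate case $\kappa(L) = -\infty$: there $\mathcal{J}(c\Vert L\Vert) = 0$ by convention, while $L+A$ is big (a pseudo-effective divisor plus an ample one), so $\kappa(L+A) \geq 0$ and the right-hand side is defined; the containment is automatic. From now on assume $\kappa(L) \geq 0$, hence also $\kappa(L+A) \geq 0$.

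Next I would observe that Theorem \ref{qvaluativedescription} holds verbatim with the constant $c$ inserted: for any divisor $M$ with $\kappa(M) \geq 0$ and any ideal sheaf $\mathfrak{q}$, one has $\mathfrak{q} \subset \mathcal{J}(c\Vert M\Vert)$ if and only if $v(\mathfrak{q}) > c\cdot v(\Vert M\Vert) - A(v)$ for every $v \in \Val^{*}(X)$. This follows from its proof without change, since \cite{jm10} Corollary 2.16 already carries the constant $c$ and the log canonical threshold $\lct^{\mathfrak{q}}$ itself does not involve $c$ (alternatively, one reduces to $c\in\mathbb{Q}$ and then to $c=1$ via Corollary \ref{rescaleval}). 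Granting this, the corollary reduces to the single inequality
\begin{equation*}
v(\Vert L+A\Vert) \leq v(\Vert L\Vert) \qquad \text{for every } v\in\Val^{*}(X).
\end{equation*}
Indeed, taking $\mathfrak{q} = \mathcal{J}(c\Vert L\Vert)$ in the criterion for $M = L$ gives $v(\mathfrak{q}) > c\,v(\Vert L\Vert) - A(v) \geq c\,v(\Vert L+A\Vert) - A(v)$ for all $v$, and the criterion applied to $M = L+A$ then shows $\mathfrak{q}\subset\mathcal{J}(c\Vert L+A\Vert)$, which is the claim.

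Finally I would prove the displayed inequality. Because $A$ is ample and $L$ is $\mathbb{R}$-Cartier, the ample cone is open, so there is $\epsilon\in(0,1)$ with $A+\epsilon L$ ample; Lemma \ref{observation} then gives $v(\Vert L+A\Vert) \leq (1-\epsilon)\,v(\Vert L\Vert) \leq v(\Vert L\Vert)$, the last inequality because $v(\Vert L\Vert)\geq 0$. I expect the only mildly delicate point to be the bookkeeping with the constant $c$ above; the geometric content --- that perturbing by an ample divisor can only shrink stable base loci and so enlarge the multiplier ideal --- is carried entirely by Lemma \ref{observation}, so no further work with $\mathbb{R}$-divisor round-downs or with analytic inputs is required.
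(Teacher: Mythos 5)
Your proposal is correct and follows exactly the route the paper takes: its entire proof of this corollary is ``Combine Theorem \ref{qvaluativedescription} and Lemma \ref{observation}.'' You have simply supplied the details (the $\kappa(L)=-\infty$ case, the insertion of the constant $c$ into the valuative criterion, and the monotonicity $v(\Vert L+A\Vert)\leq v(\Vert L\Vert)$) that the paper leaves implicit, and all of these are handled correctly.
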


\begin{proof}
Combine Theorem \ref{qvaluativedescription} and Lemma \ref{observation}.
\end{proof}

\begin{cor} \label{nestedideals}
Let $L$ be a pseudo-effective $\mathbb{R}$-divisor and let $c>0$.  Then for any $\epsilon \in \mathbb{Q}_{> 0}$ we have
\begin{equation*}
\mathcal{J}(c\Vert L \Vert) \supset \mathcal{J}(c\Vert (1+\epsilon)L \Vert)
\end{equation*}
and the two ideals coincide for sufficiently small $\epsilon$.  
If $L$ is big, the same statement holds for any $\epsilon \in \mathbb{R}_{>0}$.
\end{cor}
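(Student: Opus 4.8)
The plan is to reduce the whole statement to the valuative / log-canonical-threshold description of asymptotic multiplier ideals from \cite{jm10}, together with the scaling identity of Corollary \ref{rescaleval}. First I would dispose of the degenerate case: if $\kappa(L)=-\infty$ then $\kappa((1+\epsilon)L)=-\infty$ as well, since positive rational rescaling preserves Iitaka dimension, so both ideals are $0$ by convention and nothing is to be proved. Assume henceforth $\kappa(L)\geq 0$. Because $\mathcal{J}(c\Vert -\Vert)$ and $v(\Vert -\Vert)$ depend only on the $\mathbb{Q}$-linear equivalence class, I may replace $L$ by an effective divisor; let $\mathfrak{a}_{\bullet}$ be the graded sequence of ideals associated to it constructed above, so that $v(\mathfrak{a}_{\bullet})=v(\Vert L\Vert)$ for every $v\in\Val^{*}(X)$ and (by the same argument used there, now with an arbitrary multiplier) $\mathcal{J}(s\Vert L\Vert)=\mathcal{J}(s\,\mathfrak{a}_{\bullet})$ for all real $s>0$. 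By \cite{jm10} Corollary 2.16 an ideal $\mathfrak{q}$ satisfies $\mathfrak{q}\subset\mathcal{J}(s\,\mathfrak{a}_{\bullet})$ precisely when $\lct^{\mathfrak{q}}(\mathfrak{a}_{\bullet})>s$. Finally, Corollary \ref{rescaleval} gives $v(\Vert(1+\epsilon)L\Vert)=(1+\epsilon)\,v(\mathfrak{a}_{\bullet})$ for every $1+\epsilon\in\mathbb{Q}_{>0}$ (and, when $L$ is big, for every $1+\epsilon\in\mathbb{R}_{>0}$), so the same identifications applied to $(1+\epsilon)L$ yield $\mathcal{J}(c\Vert(1+\epsilon)L\Vert)=\mathcal{J}\bigl(c(1+\epsilon)\,\mathfrak{a}_{\bullet}\bigr)$, with membership controlled by $\lct^{\mathfrak{q}}(\mathfrak{a}_{\bullet})>c(1+\epsilon)$.

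With these identifications the containment $\mathcal{J}(c\Vert L\Vert)\supset\mathcal{J}(c\Vert(1+\epsilon)L\Vert)$ is immediate: $\mathfrak{q}:=\mathcal{J}(c\Vert(1+\epsilon)L\Vert)$ satisfies $\lct^{\mathfrak{q}}(\mathfrak{a}_{\bullet})>c(1+\epsilon)>c$, hence $\mathfrak{q}\subset\mathcal{J}(c\,\mathfrak{a}_{\bullet})=\mathcal{J}(c\Vert L\Vert)$. Concretely, at the level of valuations this is just $v(\mathfrak{q})>c(1+\epsilon)\,v(\Vert L\Vert)-A(v)\geq c\,v(\Vert L\Vert)-A(v)$, using $v(\Vert L\Vert)\geq 0$.

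For the equality at small $\epsilon$, take $\mathfrak{q}:=\mathcal{J}(c\Vert L\Vert)$ itself. Since $\mathfrak{q}\subset\mathcal{J}(c\,\mathfrak{a}_{\bullet})$ we obtain $t:=\lct^{\mathfrak{q}}(\mathfrak{a}_{\bullet})>c$ (a genuine real number, or $+\infty$). Now choose a rational $\epsilon$ with $0<\epsilon<t/c-1$ --- or any positive real with this bound when $L$ is big; then $c(1+\epsilon)<t$, so $\mathfrak{q}\subset\mathcal{J}\bigl(c(1+\epsilon)\,\mathfrak{a}_{\bullet}\bigr)=\mathcal{J}(c\Vert(1+\epsilon)L\Vert)$, and together with the reverse containment this gives $\mathcal{J}(c\Vert L\Vert)=\mathcal{J}(c\Vert(1+\epsilon)L\Vert)$. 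Applying the containment just proved with $L$ replaced by $(1+\epsilon')L$ and scaling factor $\tfrac{1+\epsilon}{1+\epsilon'}$ then squeezes $\mathcal{J}(c\Vert L\Vert)\supset\mathcal{J}(c\Vert(1+\epsilon')L\Vert)\supset\mathcal{J}(c\Vert(1+\epsilon)L\Vert)$ for every $0<\epsilon'\leq\epsilon$, so all of these coincide with $\mathcal{J}(c\Vert L\Vert)$; this is exactly the ``sufficiently small $\epsilon$'' clause, run with real $\epsilon$ in the big case.

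The only substantive input is the equivalence $\mathfrak{q}\subset\mathcal{J}(s\,\mathfrak{a}_{\bullet})\Leftrightarrow\lct^{\mathfrak{q}}(\mathfrak{a}_{\bullet})>s$ of \cite{jm10}: it is the \emph{strictness} of this inequality that supplies the uniform room to enlarge the multiplier from $c$ to $c(1+\epsilon)$ --- without it one would only know that an infimum approaches $c$ and could not extract $\epsilon$. Everything else is bookkeeping, and the one place I would be careful is checking that the reductions (to $\kappa(L)\geq 0$, to an effective representative, and to the graded sequence with a general real multiplier) remain valid after replacing $L$ by $(1+\epsilon)L$, and tracking where rationality of $1+\epsilon$ is used versus where bigness of $L$ is what permits real scalars in Corollary \ref{rescaleval}.
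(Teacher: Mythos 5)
Your argument is correct, and the first half (the containment $\mathcal{J}(c\Vert L \Vert) \supset \mathcal{J}(c\Vert (1+\epsilon)L \Vert)$) is essentially the paper's: both reduce to the valuative membership criterion coming from \cite{jm10} together with the scaling identity of Corollary \ref{rescaleval}, and both correctly isolate rationality of $1+\epsilon$ versus bigness of $L$ as the hypothesis that makes the rescaling of asymptotic valuations legitimate.

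For the equality at small $\epsilon$ the two proofs diverge in mechanism. The paper picks a single effective divisor $D \sim_{\mathbb{Q}} L$ computing $\mathcal{J}(c\Vert L \Vert)$, invokes the discreteness of jumping numbers for the fixed divisor $D$ (so that $\mathcal{J}(cD) = \mathcal{J}(c(1+\epsilon)D)$ for small rational $\epsilon$), and then observes that $(1+\epsilon)D$ is an admissible competitor in the directed set defining $\mathcal{J}(c\Vert(1+\epsilon)L\Vert)$. You instead extract the room to enlarge $c$ to $c(1+\epsilon)$ from the strictness of the inequality $\lct^{\mathfrak{q}}(\mathfrak{a}_{\bullet}) > c$ in \cite{jm10} Corollary 2.16, applied to the graded sequence of the divisor itself. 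These are two faces of the same openness phenomenon for algebraic multiplier ideals, but your version has the advantage of treating the coefficient $c$ and the scaling factor uniformly and of making the quantitative bound $\epsilon < t/c - 1$ explicit; the paper's is shorter because it outsources the openness to a standard single-divisor fact. Two small points you should make explicit if you write this up: the paper's proposition identifying $\mathcal{J}(\mathfrak{a}_{\bullet})$ with $\mathcal{J}(\Vert L \Vert)$ is stated only for coefficient $1$, so you need to note (as you do, correctly) that its proof carries a general multiplier; and the identification of $\mathcal{J}(c\Vert(1+\epsilon)L\Vert)$ with $\mathcal{J}(c(1+\epsilon)\mathfrak{a}_{\bullet})$ goes through the graded sequence of $(1+\epsilon)L$ and the identity $\lct^{\mathfrak{q}}(\mathfrak{b}_{\bullet}) = \tfrac{1}{1+\epsilon}\lct^{\mathfrak{q}}(\mathfrak{a}_{\bullet})$, which again uses Corollary \ref{rescaleval} and hence the rational/big dichotomy in exactly the places you flagged.
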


\begin{proof}
We may assume that $\kappa(L) \geq 0$.  The containment $\supset$ follows from Theorem \ref{qvaluativedescription} and Corollary \ref{rescaleval}. Furthermore, suppose that $D \sim_{\mathbb{Q}} L$ is an effective divisor computing the asymptotic multiplier ideal $\mathcal{J}(c \Vert L \Vert)$.  Choose $\epsilon \in \mathbb{Q}_{>0}$ sufficiently small so that $\mathcal{J}(cD) = \mathcal{J}(c(1+\epsilon)D)$ (see \cite{lazarsfeld04} Example 9.2.30).  Then Definition \ref{asymmultidealdef} indicates that $\mathcal{J}(c(1+\epsilon)D) \subset \mathcal{J}(c \Vert (1+\epsilon)L \Vert)$, giving the reverse containment.

When $L$ is big, Corollary \ref{rescaleval} implies that the same argument will work for any $\epsilon \in \mathbb{R}_{>0}$.
\end{proof}

\section{Perturbations of Asymptotic Multiplier Ideals} \label{perturbedsection}

We now turn to the question of how the asymptotic multiplier ideal $\mathcal{J}(\Vert L \Vert)$ behaves as we vary the divisor $L$.  Our goal is to show that $\overline{NE}^{1}(X)$ admits a chamber decomposition such that $\mathcal{J}(\Vert - \Vert)$ is constant on the interior of each region.

\subsection{Perturbed Ideals}

This section explores how multiplier ideals behave upon small perturbations.  We first subtract a small ample divisor from $L$.

\begin{prop} \label{augmentedideal}
Let $X$ be a smooth variety and let $L$ be a big divisor.  Fix an ample divisor $A$.  Then there is some $\epsilon > 0$ such that
\begin{equation*}
\mathcal{J}(\Vert L - \epsilon A \Vert) = \mathcal{J}(\Vert L \Vert).
\end{equation*}
\end{prop}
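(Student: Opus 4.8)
The plan is to establish the two inclusions separately; only one is non-trivial.

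\textbf{The easy inclusion and a reduction.} Since $L$ is big, $L-\epsilon A$ is big, in particular pseudo-effective with $\kappa(L-\epsilon A)\ge 0$, for all sufficiently small $\epsilon>0$; fix $\epsilon_{0}>0$ with this property. Writing $L=(L-\epsilon A)+\epsilon A$ and applying Corollary \ref{plusamplecontainment} gives $\mathcal{J}(\Vert L-\epsilon A\Vert)\subseteq\mathcal{J}(\Vert L\Vert)$ for $0<\epsilon\le\epsilon_{0}$, and applied to $L-\epsilon'A=(L-\epsilon A)+(\epsilon-\epsilon')A$ it gives $\mathcal{J}(\Vert L-\epsilon A\Vert)\subseteq\mathcal{J}(\Vert L-\epsilon'A\Vert)$ whenever $0<\epsilon'<\epsilon\le\epsilon_{0}$. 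Thus the ideals $\mathcal{J}(\Vert L-\epsilon A\Vert)$ form an increasing family as $\epsilon$ decreases to $0$; since $\mathcal{O}_{X}$ is Noetherian this family stabilizes, so there are $\epsilon_{1}\in(0,\epsilon_{0}]$ and an ideal $\mathfrak{J}^{*}$ with $\mathcal{J}(\Vert L-\epsilon A\Vert)=\mathfrak{J}^{*}\subseteq\mathcal{J}(\Vert L\Vert)$ for all $0<\epsilon\le\epsilon_{1}$. In particular the statement for \emph{some} $\epsilon$ is equivalent to $\mathfrak{J}^{*}=\mathcal{J}(\Vert L\Vert)$, and then holds for all small $\epsilon$.

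\textbf{The hard inclusion $\mathcal{J}(\Vert L\Vert)\subseteq\mathfrak{J}^{*}$.} Multiplier ideals are integrally closed, so it suffices to check $v(\mathfrak{J}^{*})=v(\mathcal{J}(\Vert L\Vert))$ for every divisorial valuation $v$ on $X$; together with $\mathfrak{J}^{*}\subseteq\mathcal{J}(\Vert L\Vert)$ this forces equality. Fix a divisorial $v$. Since $v(\mathfrak{J}^{*})=v(\mathcal{J}(\Vert L-\epsilon A\Vert))$ for all $\epsilon\in(0,\epsilon_{1}]$, it is enough to show $v(\mathcal{J}(\Vert L-\epsilon A\Vert))=v(\mathcal{J}(\Vert L\Vert))$ for $\epsilon$ small. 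For this I would use the formula
$$v\big(\mathcal{J}(\Vert M\Vert)\big)=\max\{\,0,\ \lfloor v(\Vert M\Vert)-A(v)\rfloor+1\,\}$$
valid for any big divisor $M$: the inequality $\ge$ is immediate from Theorem \ref{qvaluativedescription} (applied to a principal ideal, with $v(f)>v(\Vert M\Vert)-A(v)$ and $v(f)\ge 0$), while $\le$ follows by comparing $\mathcal{J}(D)\subseteq\mathcal{J}(\Vert M\Vert)=\mathcal{J}(D^{0})$ for $D\sim_{\mathbb{Q}}M$ effective with $v(D)$ close to $v(\Vert M\Vert)=\inf\{v(D'):D'\sim_{\mathbb{Q}}M,\ D'\ge 0\}$ and $D^{0}\sim_{\mathbb{Q}}M$ computing the asymptotic multiplier ideal. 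Granting the formula, Theorem \ref{bigconefunction} shows $\epsilon\mapsto v(\Vert L-\epsilon A\Vert)$ is continuous, and Lemma \ref{observation} shows it is strictly decreasing, so $v(\Vert L-\epsilon A\Vert)\searrow v(\Vert L\Vert)$ as $\epsilon\to 0^{+}$. Hence for $\epsilon$ small $v(\Vert L-\epsilon A\Vert)-A(v)$ lies in the half-open interval $[\,\lfloor v(\Vert L\Vert)-A(v)\rfloor,\ \lfloor v(\Vert L\Vert)-A(v)\rfloor+1\,)$, so its floor is constant and equal to $\lfloor v(\Vert L\Vert)-A(v)\rfloor$, and the formula yields $v(\mathcal{J}(\Vert L-\epsilon A\Vert))=v(\mathcal{J}(\Vert L\Vert))$ for such $\epsilon$. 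The crucial point is that subtracting an ample divisor \emph{increases} asymptotic valuations, so $v(\Vert L-\epsilon A\Vert)$ meets $v(\Vert L\Vert)$ from above and the floor cannot drop.

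\textbf{Main obstacle.} The delicate ingredient is the displayed formula for $v(\mathcal{J}(\Vert M\Vert))$ for \emph{all} divisorial valuations, not only those whose center is a divisor on $X$ (where it reduces to $v(\mathcal{J}(\Vert M\Vert))=\lfloor v(\Vert M\Vert)\rfloor$ and is elementary): for a valuation realized on a blow-up $\pi:Y\to X$ one must control $\pi_{*}\mathcal{O}_{Y}(K_{Y/X}-\lfloor\pi^{*}D\rfloor)$ along the exceptional divisor, where the local vanishing theorem for multiplier ideals and the density of divisorial valuations come in. An alternative route, bypassing the formula, phrases the hard inclusion through log canonical thresholds: $\mathcal{J}(\Vert L\Vert)\subseteq\mathcal{J}(\Vert L-\epsilon A\Vert)$ iff $\lct^{\mathcal{J}(\Vert L\Vert)}$ of the graded sequence of $L-\epsilon A$ exceeds $1$; at $\epsilon=0$ this threshold equals some $c_{0}>1$ (it is computed by \cite{jm10}, and $\mathcal{J}(\Vert L\Vert)\subseteq\mathcal{J}(\Vert L\Vert)$), and one shows it stays $>1$ for small $\epsilon$ by a semicontinuity argument combining the continuity of $v\mapsto v(\Vert M\Vert)$ (Proposition \ref{gradedcontinuity}) and of $M\mapsto v(\Vert M\Vert)$ (Theorem \ref{bigconefunction}) with a compactness statement for valuations of bounded log discrepancy — the subtle step there being the interchange of the infimum over $\Val^{*}(X)$ with the limit $\epsilon\to 0$.
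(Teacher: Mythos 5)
Your first step (the easy inclusion via Corollary \ref{plusamplecontainment}, monotonicity in $\epsilon$, and stabilization by Noetherianity) is fine and matches the paper. The reduction of the hard inclusion to checking divisorial valuations via integral closedness is also legitimate in principle. But the formula you rely on,
\begin{equation*}
v\big(\mathcal{J}(\Vert M \Vert)\big) = \max\{\,0,\ \lfloor v(\Vert M \Vert) - A(v)\rfloor + 1\,\},
\end{equation*}
is false: only the inequality $\geq$ holds. The inequality $\leq$ fails because $v(\mathcal{J}(D^{0}))$ is not determined by $v(D^{0})$ and $A(v)$ alone --- when you push forward $\mathcal{O}_{Y}(K_{Y/X} - \lfloor \phi^{*}D^{0}\rfloor)$ from a log resolution, the vanishing imposed along \emph{other} divisors can force every element of the ideal to vanish to higher order along the center of $v$ than the local constraint at $v$ requires. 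Concretely, let $\pi: X \to \mathbb{P}^{2}$ be the blow-up of a point with exceptional curve $E$, let $L = \pi^{*}H + \tfrac{5}{2}E$, and let $v$ be the order of vanishing along the exceptional divisor of a further blow-up at a point of $E$. Then $v(\Vert L \Vert) = \tfrac{5}{2}$ and $A(v) = 2$, so your formula predicts $v(\mathcal{J}(\Vert L \Vert)) = 1$; but $\mathcal{J}(\Vert L \Vert) = \mathcal{O}_{X}(-2E)$, whose $v$-value is $2$. So the floor-tracking argument built on this formula does not go through. (Compare Proposition \ref{zardecom}(1), which only computes $\ord_{\Gamma}(\mathcal{J}_{\sigma}(L))$ for $\Gamma$ a prime divisor \emph{on $X$}, where $A = 1$ and the phenomenon above does not occur.)

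Your alternative route through $\lct^{\mathfrak{q}}$ is the right general shape, but as you note it hinges on an unproved interchange of the infimum over $\Val^{*}(X)$ with the limit $\epsilon \to 0$; that semicontinuity is exactly where the content of the proposition lives, so the sketch does not close the gap. The paper avoids all of this with a convexity trick: fix $m$ with $mL - A$ big and, for $m\epsilon < 1$, write
\begin{equation*}
L - \epsilon A = (1-m\epsilon)\left( L + \tfrac{\epsilon}{1-m\epsilon}(mL - A) \right).
\end{equation*}
By Corollary \ref{rescaleval}, $v(\Vert L - \epsilon A \Vert) = (1-m\epsilon)\, v(\Vert L + \tfrac{\epsilon}{1-m\epsilon}(mL-A) \Vert) \leq v(\Vert L + \tfrac{\epsilon}{1-m\epsilon}(mL-A) \Vert)$ for every $v$, so Theorem \ref{qvaluativedescription} gives $\mathcal{J}(\Vert L - \epsilon A \Vert) \supset \mathcal{J}(\Vert L + \tfrac{\epsilon}{1-m\epsilon}(mL-A)\Vert)$, and the latter equals $\mathcal{J}(\Vert L \Vert)$ for small $\epsilon$ by the argument of Corollary \ref{nestedideals}. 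This uses the valuative criterion only to compare two ideals whose asymptotic valuations are ordered uniformly in $v$, never to compute an individual $v(\mathcal{J}(\Vert M \Vert))$. I would rework your argument along these lines.
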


\begin{proof}
For any $\epsilon > 0$ the containment $\subset$ follows from Corollary \ref{plusamplecontainment}.  Conversely, fix some $m>0$ such that $mL - A$ is big.  Suppose that $\epsilon$ is a sufficiently small positive number so that $m\epsilon < 1$.  Then
\begin{align*}
v(\Vert L - \epsilon A \Vert) &= v(\Vert (1- m\epsilon)L + \epsilon (mL - A) \Vert) \\
& = (1-m\epsilon) v\left(\left\Vert L + \frac{\epsilon}{1-m\epsilon}(mL-A) \right\Vert \right)
\end{align*}
for every valuation $v \in \Val^{*}(X)$.  Note that Corollary \ref{nestedideals} shows that we may choose $\epsilon$ sufficiently small so that $\mathcal{J}(\Vert L + \frac{\epsilon}{1-m\epsilon} (mL-A) \Vert) = \mathcal{J}(\Vert L \Vert)$.  By comparing valuations Theorem \ref{qvaluativedescription} yields
\begin{equation*}
\mathcal{J}(\Vert L - \epsilon A \Vert) \supset \mathcal{J}\left( \left\Vert L + \frac{\epsilon}{1-m\epsilon} (mL-A) \right\Vert \right) = \mathcal{J}(\Vert L \Vert)
\end{equation*}
\end{proof}

We next add a small ample divisor.  In order to understand this case, we will need a slight generalization of a construction from \cite{hacon04} Proposition 5.1.

\begin{thrm} \label{haconconstr}
Let $\mathcal{H} \subset N^{1}(X)$ be a bounded set of classes of pseudo-effective divisors.  Then the set of ideals $\{ \mathcal{J}(\Vert L \Vert) \}_{c_{1}(L) \in \mathcal{H}}$ does not have any infinite descending sequence.
\end{thrm}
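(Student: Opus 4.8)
The plan is to reduce the statement about ideals to a statement about valuations via the valuative description of asymptotic multiplier ideals (Theorem \ref{qvaluativedescription}), and then exploit boundedness of $\mathcal{H}$ together with the finite-generation/Noetherianity built into the construction of asymptotic multiplier ideals. Concretely, suppose for contradiction that there is an infinite strictly descending chain $\mathcal{J}(\Vert L_{1} \Vert) \supsetneq \mathcal{J}(\Vert L_{2} \Vert) \supsetneq \cdots$ with each $c_{1}(L_{i}) \in \mathcal{H}$. First I would fix an ample divisor $A$; since $\mathcal{H}$ is bounded, there is a single real number $t>0$ (depending only on $\mathcal{H}$) such that $L_{i} + tA$ is ample for every $i$, and moreover the classes $c_{1}(L_{i}+tA)$ lie in a bounded subset of the big cone. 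The key point of the Hacon-type argument is that one can run a Noetherian induction on a \emph{fixed} ambient object rather than on the varying $L_{i}$.

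The main step is to compare each $\mathcal{J}(\Vert L_{i} \Vert)$ with $\mathcal{J}(\Vert L_{i} + sA \Vert)$ for small $s$. By Corollary \ref{plusamplecontainment} we have $\mathcal{J}(\Vert L_{i} \Vert) \subset \mathcal{J}(\Vert L_{i} + sA \Vert)$, and by Corollary \ref{nestedideals} (applied in the big cone, where the statement is valid for real $\epsilon$) together with Proposition \ref{augmentedideal}, for each fixed $i$ there is a threshold below which $\mathcal{J}(\Vert L_{i} + sA \Vert)$ stabilizes to $\mathcal{J}(\Vert L_{i} \Vert)$. To make this uniform I would cover the bounded set $\{c_{1}(L_{i}+tA)\}$ by finitely many small balls inside the big cone on which $v(\Vert - \Vert)$ is controlled using continuity of $v(\Vert - \Vert)$ on the big cone (Theorem \ref{bigconefunction}), producing finitely many ``anchor'' big classes $M_{1},\dots,M_{k}$ such that every $L_{i}$ satisfies $\mathcal{J}(\Vert L_{i} \Vert) = \mathcal{J}(\Vert M_{j(i)} - s_i A\Vert)$ type comparisons; since there are only finitely many anchors and the augmented ideals $\mathcal{J}(\Vert M_{j} \Vert)$ form a finite list, only finitely many distinct ideals $\mathcal{J}(\Vert L_i\Vert)$ can occur, contradicting the infinite strict descent. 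Alternatively — and this may be the cleaner route — one argues directly: the union $\mathfrak{b} = \bigcup_i \mathcal{J}(\Vert L_i\Vert)$ is generated (Noetherian) by $\mathcal{J}(\Vert L_{i_0}\Vert)$ for some $i_0$, so every later ideal contains $\mathcal{J}(\Vert L_{i_0}\Vert)$ — but the union is the \emph{largest} ideal, i.e. $\mathcal{J}(\Vert L_1\Vert)$, which is a contradiction since the chain is strictly \emph{descending}. The real content is therefore not avoiding an ascending-chain trick but rather showing the family is \emph{finite}, which is where boundedness of $\mathcal{H}$ and continuity of valuations enter.

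The main obstacle I anticipate is the uniformity: Proposition \ref{augmentedideal} and Corollary \ref{nestedideals} each give a threshold $\epsilon$ that a priori depends on the individual divisor, and one needs these thresholds to be bounded below uniformly over the bounded family $\mathcal{H}$. Overcoming this requires the valuative description: by Theorem \ref{qvaluativedescription}, membership in $\mathcal{J}(\Vert L \Vert)$ is governed by the inequalities $v(\mathfrak{q}) > v(\Vert L \Vert) - A(v)$, and by Theorem \ref{bigconefunction} the function $L \mapsto v(\Vert L \Vert)$ is continuous — indeed locally uniformly controlled — on the big cone. So on each small ball $B$ around an anchor class $M_j$, and for a small ample perturbation, the quantities $v(\Vert L \Vert)$ differ from $v(\Vert M_j \Vert)$ by a controlled amount independent of which $L \in B$ we pick; feeding this into the valuative criterion shows $\mathcal{J}(\Vert L\Vert)$ is locally constant on the big cone modulo finitely many values, and a compactness argument on the (bounded, hence precompact) set $\mathcal{H}+tA$ finishes it. The technical care needed is that $\mathcal{H}$ may contain classes on the pseudo-effective boundary where $v(\Vert-\Vert)$ is not continuous, which is precisely why one translates by $+tA$ into the big cone before applying Theorem \ref{bigconefunction}; the original ideal $\mathcal{J}(\Vert L\Vert)$ is then recovered from the big-cone data by Proposition \ref{augmentedideal}.
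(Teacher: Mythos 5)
Your proposal does not match the paper's argument, and as written it has a genuine gap. The paper's proof is much more direct: since $\mathcal{H}$ is bounded one fixes a single ample divisor $D$ with $D-L$ ample for every $L$ in the family; Nadel vanishing then kills $H^{i}(X,\mathcal{O}_{X}(K_{X}+D+nH)\otimes\mathcal{J}(\Vert L\Vert))$ for $i>0$, Castelnuovo--Mumford regularity makes each sheaf $\mathcal{O}_{X}(K_{X}+D+nH)\otimes\mathcal{J}(\Vert L\Vert)$ globally generated, and hence each ideal is determined by a subspace of the \emph{fixed} finite-dimensional space $H^{0}(X,\mathcal{O}_{X}(K_{X}+D+nH))$. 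A strictly descending chain of subspaces of a finite-dimensional vector space must terminate, which is exactly the DCC claimed. Note that this argument does \emph{not} show the family of ideals is finite (a finite-dimensional space has infinitely many subspaces); finiteness is strictly stronger than DCC and is not what the theorem asserts.

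This points to the gap in your plan: you set out to prove that only finitely many ideals $\mathcal{J}(\Vert L_{i}\Vert)$ occur, but the tools you invoke do not deliver this. Proposition \ref{localdecomp} gives only the two-sided containment $\mathcal{J}(\Vert L\Vert)\subset\mathcal{J}(\Vert D\Vert)\subset\mathcal{J}_{-}(L)$ for $D$ near $L$, not local constancy (local constancy holds only at balanced divisors, and a priori there could be infinitely many balanced chambers meeting $\mathcal{H}$). Covering a precompact set by finitely many such neighborhoods therefore only sandwiches every ideal in the family between finitely many fixed pairs $\mathcal{J}(\Vert M_{j}\Vert)\subset\cdot\subset\mathcal{J}_{-}(M_{j})$, and a descending chain trapped between two fixed ideals need not terminate unless the quotient has finite length, which it does not here. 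Two further problems: the thresholds in Proposition \ref{augmentedideal} and Corollary \ref{nestedideals} depend on the individual divisor, and continuity of each single function $v(\Vert-\Vert)$ does not give the uniform control over \emph{all} valuations that the valuative criterion of Theorem \ref{qvaluativedescription} requires; and your final step of recovering $\mathcal{J}(\Vert L\Vert)$ from $\mathcal{J}(\Vert L+tA\Vert)$ via Proposition \ref{augmentedideal} is backwards --- that proposition perturbs a \emph{big} divisor downward, and for $L$ on the pseudo-effective boundary (or even for big $L$, cf.\ Example \ref{perturbedexample}) one has $\mathcal{J}(\Vert L\Vert)\subsetneq\mathcal{J}_{-}(L)$ in general, so the big-cone data does not determine the original ideal. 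You correctly discarded your Noetherian-union aside (ACC is irrelevant to a descending chain), but the replacement finiteness argument is not salvageable from the cited results; the vanishing-plus-regularity argument is the right mechanism.
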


\begin{proof}
By removing those divisors with $\mathcal{J}(\Vert L \Vert) = 0$, we may assume that every divisor $L$ with class in $\mathcal{H}$ has $\kappa(L) \geq 0$ and is non-zero.

Since $\mathcal{H}$ is bounded, we may choose an ample $\mathbb{Z}$-divisor $D$ such that for any $L$ with class in $\mathcal{H}$ we have that $D - L$ is ample.  Thus for any $L$ with class in $\mathcal{H}$ there is an ample divisor $A'$ such that
\begin{equation*}
K_{X} + D + nH \equiv L + A' + K_{X} + nH.
\end{equation*}
Nadel vanishing implies that
\begin{equation*}
H^{i}\left(X,\mathcal{O}_{X}(K_{X} + D + nH) \otimes \mathcal{J}\left(\left\Vert L \right\Vert\right)\right) = 0
\end{equation*}
for all $i>0$.  By Castelnuovo-Mumford regularity, the sheaves $\mathcal{O}_{X}(K_{X} + D + nH) \otimes \mathcal{J}(\Vert L \Vert)$ are globally generated for any $L$ with class in $\mathcal{H}$.  In particular, the subsheaves
\begin{equation*}
\mathcal{O}_{X}(K_{X} + D + nH) \otimes \mathcal{J}(\Vert L \Vert) \subset \mathcal{O}_{X}(K_{X} + D + nH)
\end{equation*}
are uniquely determined by their global sections
\begin{equation*}
H^{0}\left(X,\mathcal{O}_{X}(K_{X} + D + nH) \otimes \mathcal{J}\left( \left\Vert L \right\Vert\right)\right)
\subset H^{0}\left(X,\mathcal{O}_{X}(K_{X} + D + nH) \right).
\end{equation*}
Since $H^{0}(X,\mathcal{O}_{X}(K_{X} + D + nH))$ is finite dimensional, there is no infinite descending chain of subspaces, hence also no infinite descending chain of multiplier ideals.
\end{proof}

This construction allows us to define the multiplier ideal corresponding to a small perturbation of $L$ by an ample divisor.

\begin{defn}[\cite{hacon04}] \label{perturbedidealdefn}
Let $X$ be a smooth variety and let $L$ be a pseudo-effective divisor.  Fix any ample divisor $A$.  We define the perturbed ideal $\mathcal{J}_{-}(L)$ to be the smallest ideal in the finite descending chain of ideal sheaves $\{ \mathcal{J}(\Vert L + \frac{1}{t}A \Vert) \}_{t=1}^{\infty}$.  Note that this ideal is independent of the choice of $A$.
\end{defn}

\begin{rmk}
There is no reason to expect $\mathcal{J}_{-}(L)$ to control vanishing theorems for $L$ unless we include a perturbation by an ample (or big) divisor.  We thus avoid calling $\mathcal{J}_{-}(L)$ a multiplier ideal, reserving this notation for ideals that can be constructed by metrics.
\end{rmk}

It is clear that $\mathcal{J}(\Vert L \Vert) \subset \mathcal{J}_{-}(L)$ and that $\mathcal{J}_{-}(L)$ only depends on the numerical class of $L$.  

\begin{exmple} \label{perturbedexample}
It may happen that $\mathcal{J}(\Vert L \Vert) \subsetneq \mathcal{J}_{-}(L)$ even for a big divisor $L$.  For example, suppose that $\pi: S \to \mathbb{P}^{2}$ is the blow-up of a point.  Let $E$ denote the exceptional divisor and let $H$ denote the hyperplane class on $\mathbb{P}^{2}$.  Then
\begin{equation*}
\mathcal{J}(\Vert \pi^{*}H + E \Vert) = \mathcal{O}_{S}(-E) \subsetneq \mathcal{O}_{S} = \mathcal{J}_{-}(\pi^{*}H + E).
\end{equation*}
\end{exmple}

%The following lemma shows that the perturbed ideal controls the diminished base locus.

\begin{lem} \label{perturbedgivesdiminishedlocus}
Let $L$ be a pseudo-effective divisor.  We have an equality of sets
\begin{equation*}
\bigcup_{m \in \mathbb{Z}_{> 0}} V(\mathcal{J}_{-}(mL)) = \mathbf{B}_{-}(L).
\end{equation*}
\end{lem}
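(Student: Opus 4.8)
The plan is to prove the two inclusions of sets separately, translating between the vanishing locus of the perturbed ideals and the diminished base locus via the stable base loci $\mathbf{B}_{\mathbb{R}}(L+A)$ for small ample $A$.

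For the inclusion $\bigcup_m V(\mathcal{J}_-(mL)) \subset \mathbf{B}_-(L)$, fix $m$ and recall that by Definition \ref{perturbedidealdefn} we have $\mathcal{J}_-(mL) = \mathcal{J}(\Vert mL + \tfrac{1}{t}A\Vert)$ for $t$ sufficiently large and any fixed ample $A$. I would first observe that for any divisor $M$ with $\kappa(M) \geq 0$ one has $V(\mathcal{J}(\Vert M\Vert)) \subset \mathbf{B}_{\mathbb{R}}(M)$: indeed, if $D \sim_{\mathbb{Q}} M$ computes the asymptotic multiplier ideal then $\mathcal{J}(\Vert M\Vert) = \mathcal{J}(D)$, and the zero locus of $\mathcal{J}(D)$ is contained in $\Supp(D)$, so $V(\mathcal{J}(\Vert M\Vert)) \subset \Supp(D)$ for some effective $D\sim_{\mathbb{Q}} M$; intersecting over such $D$ gives containment in $\mathbf{B}_{\mathbb{R}}(M)$ after noting that one may vary $D$ among all effective $\mathbb{Q}$-divisors $\mathbb{Q}$-linearly equivalent to $M$ (every effective $D' \sim_{\mathbb{R}} M$ that is rational contributes, and these cut out $\mathbf{B}_{\mathbb{R}}(M)$). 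Applying this with $M = mL + \tfrac{1}{t}A$ gives $V(\mathcal{J}_-(mL)) \subset \mathbf{B}_{\mathbb{R}}(mL + \tfrac{1}{t}A) \subset \mathbf{B}_-(mL) = \mathbf{B}_-(L)$, where the last equality uses that $\mathbf{B}_-$ depends only on the numerical class and is invariant under positive rescaling. Taking the union over $m$ finishes this direction.

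For the reverse inclusion $\mathbf{B}_-(L) \subset \bigcup_m V(\mathcal{J}_-(mL))$, I would use the description $\mathbf{B}_-(L) = \bigcup_m \mathbf{B}_{\mathbb{R}}(\lceil mL\rceil + A)$ for a suitable fixed ample $A$ (Nakayama's theorem, quoted in a commented-out block above but available in \cite{nakayama04}); alternatively, argue directly that any $x \in \mathbf{B}_-(L)$ lies in $\mathbf{B}_{\mathbb{R}}(L + \tfrac{1}{s}A')$ for every $s$ and some ample $A'$. The key point is that for a divisor $M$ with $M$ \emph{big}, if $x \in \mathbf{B}_{\mathbb{R}}(M)$ then $x \in V(\mathcal{J}_-(M'))$ for a suitable integral multiple $M'$: here one wants to promote membership in the stable base locus to the vanishing of a perturbed asymptotic multiplier ideal. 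Concretely, if $x \in \mathbf{B}_{\mathbb{R}}(mL + \tfrac{1}{t}A)$ then $mL + \tfrac{1}{t}A$ is not globally generated near $x$ even asymptotically, and by scaling up $m$ and using that $\mathcal{J}(\Vert k(mL+\tfrac{1}{t}A)\Vert) = \mathcal{J}(\Vert kmL + \tfrac{k}{t}A\Vert)$ stabilizes to $\mathcal{J}_-(kmL)$ for $k \gg 0$, one relates the base locus to the ideal via the standard fact that a point in the stable base locus of a big divisor lies in the zero locus of its asymptotic multiplier ideal after passing to the perturbed version (this is where the perturbation by $A$ is essential, exactly as flagged in the remark after Definition \ref{perturbedidealdefn}).

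The main obstacle will be the reverse inclusion, and specifically the implication ``$x \in \mathbf{B}_{\mathbb{R}}(\text{big divisor } M) \Rightarrow x \in V(\mathcal{J}_-(\text{multiple of } M))$.'' The subtlety is that the asymptotic multiplier ideal $\mathcal{J}(\Vert M\Vert)$ itself can be trivial at a point of $\mathbf{B}_{\mathbb{R}}(M)$ (Example \ref{perturbedexample} is precisely such a phenomenon), so one genuinely needs the perturbed ideal and a careful argument using the finiteness in Theorem \ref{haconconstr} together with the valuative criterion Theorem \ref{qvaluativedescription}: I expect to produce a divisorial valuation $v$ centered at $x$ with $v(\Vert mL + \tfrac{1}{t}A\Vert) > 0$ for all large $t$ (using that $x \in \mathbf{B}_{\mathbb{R}}(mL + \tfrac{1}{t}A)$ forces $\sigma_v$ or the asymptotic valuation to be positive), and then to check that for an appropriate integral multiple this positivity is large enough relative to $A(v)$ to force $x \in V(\mathcal{J}_-(\cdot))$ — which will require choosing the multiple $m$ large enough to dominate the bounded quantity $A(v)$, exploiting homogeneity ($v(\Vert mL+\cdots\Vert)$ scales linearly in $m$ while $A(v)$ is fixed) exactly as in the proof of Theorem \ref{haconconstr}.
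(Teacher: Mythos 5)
Your forward inclusion $\bigcup_m V(\mathcal{J}_-(mL)) \subset \mathbf{B}_-(L)$ is essentially fine: since $\mathcal{J}_-(mL) = \mathcal{J}(\Vert mL + \tfrac{1}{t}A\Vert) \supset \mathcal{J}(D)$ for every effective $D \sim_{\mathbb{Q}} mL + \tfrac{1}{t}A$, its zero locus sits inside each $\Supp(D)$, and landing in $\mathbf{B}_-(mL) = \mathbf{B}_-(L)$ then only requires the routine (if fiddly) passage between $\sim_{\mathbb{Q}}$ and $\sim_{\mathbb{R}}$, absorbing the error into a fraction of the ample $A$. The problem is the reverse inclusion, and specifically the step you describe as producing a divisorial valuation $v$ centered at $x$ with $v(\Vert mL + \tfrac{1}{t}A\Vert) > 0$ ``using that $x \in \mathbf{B}_{\mathbb{R}}(mL + \tfrac{1}{t}A)$ forces the asymptotic valuation to be positive.'' That implication is false: if $x$ lies in the stable base locus of a big divisor $M$, then $\mathrm{ord}_E(D) \geq 1$ for each individual $D \in |kM|$ (with $E$ the exceptional divisor over $x$), but the asymptotic quantity $\tfrac{1}{k}\mathrm{ord}_E(|kM|)$ may still tend to $0$. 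Positivity of some asymptotic divisorial valuation centered through $x$ detects membership in $\mathbf{B}_-(M)$, not in $\mathbf{B}_{\mathbb{R}}(M)$, and these loci genuinely differ (this is the same distinction as between $\mathbf{B}$ and $\mathbf{B}_-$, e.g.\ for big and nef non-semiample divisors). So as written the key valuation does not exist in general.

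What rescues the statement is that $x \in \mathbf{B}_-(L)$ gives $x \in \mathbf{B}_{\mathbb{R}}(L + tA')$ for \emph{all} sufficiently small $t>0$, hence $x \in \mathbf{B}_-(L + \epsilon A')$ for the \emph{big} divisor $L + \epsilon A'$; but then the implication you need, namely $x \in \mathbf{B}_-(M) \Rightarrow x \in V(\mathcal{J}(\Vert pM \Vert))$ for some $p$ when $M$ is big, is precisely \cite{elmnp05} Corollary 2.10, whose proof runs through Nadel vanishing plus Castelnuovo--Mumford regularity (uniform global generation of $\mathcal{O}_X(K_X + pM + (n+1)H)\otimes \mathcal{J}(\Vert pM\Vert)$) rather than any formal valuation-theoretic scaling. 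Your appeal to the finiteness in Theorem \ref{haconconstr} gestures at the right circle of ideas, but finiteness of the chain of ideals does not by itself produce the valuation or the global generation statement. The paper's own proof short-circuits all of this: it quotes the big case from \cite{elmnp05}, extends it to big $\mathbb{R}$-divisors, and then writes $\mathbf{B}_-(L) = \bigcup_{\epsilon>0}\mathbf{B}_-(L+\epsilon A) = \bigcup_{\epsilon>0}\bigcup_m V(\mathcal{J}(\Vert m(L+\epsilon A)\Vert)) = \bigcup_m V(\mathcal{J}_-(mL))$. You should either reduce to that cited result in the same way, or reproduce its global generation argument; the scaling of $\sigma_v(mL)$ against the fixed $A(v)$ at the end of your sketch is fine once the valuation is actually in hand.
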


\begin{proof}
\cite{elmnp05} Corollary 2.10 shows that for a big $\mathbb{Q}$-divisor $D$ we have
\begin{equation*}
\bigcup_{m \in \mathbb{Z}_{> 0}} V(\mathcal{J}(\Vert mD \Vert)) = \mathbf{B}_{-}(D).
\end{equation*}
The proof extends easily to the case of an arbitrary big $\mathbb{R}$-divisor $D$.
Then for a fixed ample divisor $A$,
\begin{align*}
\mathbf{B}_{-}(L) & = \bigcup_{\epsilon > 0} \mathbf{B}_{-}(L + \epsilon A) \\
& = \bigcup_{\epsilon > 0} \bigcup_{m  \in \mathbb{Z}_{> 0}} V(\mathcal{J}(\Vert m(L+\epsilon A) \Vert)) \\
& = \bigcup_{m  \in \mathbb{Z}_{> 0}} V(\mathcal{J}_{-}(mL))
\end{align*}
\end{proof}

The perturbed ideal has a natural interpretation in terms of valuations.

\begin{prop} \label{perturbedvaluation}
Let $L$ be a pseudo-effective $\mathbb{Q}$-divisor on $X$.  Fix an ideal sheaf $\mathfrak{q}$ on $X$.  Then $\mathfrak{q} \subset \mathcal{J}_{-}(L)$ iff
\begin{equation*} \label{perturbed} \tag{*}
v(\mathfrak{q}) \geq \sigma_{v}(L) - A(v)
\end{equation*}
for every $v \in \Val^{*}(X)$.  In fact, for the reverse implication it suffices to check \eqref{perturbed} for every divisorial valuation $v \in \Val^{*}(X)$.
\end{prop}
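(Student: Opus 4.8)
The plan is to reduce the statement to the valuative description of the asymptotic multiplier ideal (Theorem \ref{qvaluativedescription}) applied to the perturbed divisors $L + \tfrac{1}{t}A$, and then to pass to the limit $t \to \infty$ using the definition $\sigma_v(L) = \lim_{\epsilon \to 0} v(\Vert L + \epsilon A\Vert)$. First I would record that $\mathcal{J}_{-}(L)$ equals $\mathcal{J}(\Vert L + \tfrac{1}{t_0}A\Vert)$ for all sufficiently large $t_0$, by Definition \ref{perturbedidealdefn} and Theorem \ref{haconconstr} (the classes $L + \tfrac{1}{t}A$ for $t \geq 1$ are bounded, so the chain stabilizes). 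Thus $\mathfrak{q} \subset \mathcal{J}_{-}(L)$ iff $\mathfrak{q} \subset \mathcal{J}(\Vert L + \tfrac{1}{t_0}A\Vert)$, which by Theorem \ref{qvaluativedescription} holds iff $v(\mathfrak{q}) > v(\Vert L + \tfrac{1}{t_0}A\Vert) - A(v)$ for every $v \in \Val^{*}(X)$.

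For the forward implication: if $\mathfrak{q} \subset \mathcal{J}_{-}(L)$, then for every $v$ we have $v(\mathfrak{q}) > v(\Vert L + \tfrac{1}{t_0}A\Vert) - A(v)$, and since $v(\Vert L + \tfrac{1}{t}A\Vert)$ is decreasing in $t$ (Lemma \ref{observation}) and converges to $\sigma_v(L)$, we get $v(\Vert L + \tfrac{1}{t_0}A\Vert) \geq \sigma_v(L)$, hence $v(\mathfrak{q}) > \sigma_v(L) - A(v) \geq \sigma_v(L) - A(v)$, in particular the non-strict inequality \eqref{perturbed}. For the reverse implication I would argue the contrapositive: if $\mathfrak{q} \not\subset \mathcal{J}_{-}(L) = \mathcal{J}(\Vert L + \tfrac{1}{t_0}A\Vert)$, then Theorem \ref{qvaluativedescription} produces some $v_0 \in \Val^{*}(X)$ with $v_0(\mathfrak{q}) \leq v_0(\Vert L + \tfrac{1}{t_0}A\Vert) - A(v_0)$; I must then manufacture a valuation (eventually divisorial) violating \eqref{perturbed}, i.e. with $v(\mathfrak{q}) < \sigma_v(L) - A(v)$. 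The key point is that $v_0(\Vert L + \tfrac{1}{t_0}A\Vert)$ may be strictly larger than $\sigma_{v_0}(L)$, so $v_0$ itself need not violate \eqref{perturbed}; this is the main obstacle.

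To overcome it I would first pick a slightly larger parameter: choose $t_1 > t_0$, so $\mathcal{J}(\Vert L + \tfrac{1}{t_1}A\Vert) = \mathcal{J}_{-}(L)$ still, and there is $v_1$ with $v_1(\mathfrak{q}) \leq v_1(\Vert L + \tfrac{1}{t_1}A\Vert) - A(v_1)$. Because $\sigma_v(L) = \inf_{t} v(\Vert L + \tfrac{1}{t}A\Vert)$ and for $\mathbb{Q}$-divisors $v(\Vert L + \tfrac{1}{t}A\Vert)$ varies in a controlled way, I would run this for a sequence $t_k \to \infty$ and extract a valuation where $v(\Vert L + \tfrac{1}{t_k}A\Vert)$ is close to $\sigma_v(L)$; the strict-versus-nonstrict gap between $>$ in Theorem \ref{qvaluativedescription} and $\geq$ in \eqref{perturbed} is exactly what lets a limiting/approximating valuation violate \eqref{perturbed}. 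Finally, to reduce to divisorial valuations in the reverse direction, I would invoke Lemma \ref{divdensity}: approximate the offending $v$ by a net of divisorial valuations $w_\alpha \to v$ with $A(w_\alpha) \to A(v)$; using continuity of $w \mapsto \sigma_w(L) = w(\Vert L + \epsilon A\Vert)$-type statements (Proposition \ref{gradedcontinuity} applied to the big divisors $L + \epsilon A$, together with lower semicontinuity of $v(\mathfrak{q})$ via the retraction) one of the $w_\alpha$ will also violate \eqref{perturbed}, since the violated inequality is open in the relevant sense. The delicate bookkeeping—ensuring the approximation preserves a strict violation of a non-strict inequality, and handling valuations with $A(v) = \infty$—is where the real work lies, but the tools (Lemma \ref{divdensity}, Proposition \ref{gradedcontinuity}, Theorem \ref{qvaluativedescription}, Lemma \ref{observation}) are all in place.
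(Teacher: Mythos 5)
Your toolkit is the right one (Theorem \ref{qvaluativedescription}, Lemma \ref{observation}, Lemma \ref{divdensity}, semicontinuity of $\sigma_{v}$), and your divisorial reduction via Lemma \ref{divdensity} is essentially the paper's argument run contrapositively. But there is a genuine error at the heart of both implications: you have the monotonicity of $\epsilon \mapsto v(\Vert L + \epsilon A \Vert)$ backwards. By Lemma \ref{observation} this function is \emph{decreasing} in the ample coefficient, so $\sigma_{v}(L) = \lim_{\epsilon \to 0^{+}} v(\Vert L + \epsilon A\Vert)$ is the \emph{supremum}, and $v(\Vert L + \tfrac{1}{t_{0}}A\Vert) \leq \sigma_{v}(L)$ — not $\geq$, as you assert. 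In the forward direction this breaks your argument: from $\mathfrak{q} \subset \mathcal{J}(\Vert L + \tfrac{1}{t_{0}}A\Vert)$ for a single $t_{0}$ you get $v(\mathfrak{q}) > v(\Vert L + \tfrac{1}{t_{0}}A\Vert) - A(v)$, which is \emph{weaker} than $v(\mathfrak{q}) \geq \sigma_{v}(L) - A(v)$. The fix is to use that $\mathcal{J}_{-}(L)$ is the \emph{smallest} ideal in the chain, so $\mathfrak{q} \subset \mathcal{J}(\Vert L + \epsilon A\Vert)$ for \emph{every} $\epsilon > 0$, and then let $\epsilon \to 0$; this is exactly what the paper does.

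In the reverse direction the same sign error leads you to misidentify the obstacle: $v_{0}(\Vert L + \tfrac{1}{t_{0}}A\Vert)$ is never larger than $\sigma_{v_{0}}(L)$, so the real issue is only the mismatch between the strict inequality in Theorem \ref{qvaluativedescription} and the non-strict one in \eqref{perturbed}. Your proposed resolution ("run this for $t_{k} \to \infty$ and extract a valuation") is not an argument. The missing ingredient is again Lemma \ref{observation}, used quantitatively: writing $L + \epsilon A = (L + \tfrac{\epsilon}{2}A) + \tfrac{\epsilon}{2}A$ gives $v(\Vert L + \epsilon A\Vert) \leq (1-\delta)\, v(\Vert L + \tfrac{\epsilon}{2}A\Vert) \leq (1-\delta)\,\sigma_{v}(L)$, so $v(\Vert L + \epsilon A\Vert) < \sigma_{v}(L)$ whenever $\sigma_{v}(L) > 0$ (and when $\sigma_{v}(L)=0$ one uses $A(v) > 0$ for nontrivial $v$). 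With this, \eqref{perturbed} immediately yields the strict inequality $v(\mathfrak{q}) > v(\Vert L + \epsilon A\Vert) - A(v)$ for every $\epsilon$ and every $v$, hence $\mathfrak{q} \subset \mathcal{J}(\Vert L + \epsilon A\Vert)$ for all $\epsilon$ by Theorem \ref{qvaluativedescription}; the paper's direct argument along these lines is much cleaner than your contrapositive, which can be salvaged but only by this same estimate.
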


\begin{proof}
Suppose first that $\mathfrak{q} \subset \mathcal{J}_{-}(L)$.  For every $\epsilon > 0$ we have $v(\mathfrak{q}) > v(\Vert L + \epsilon A \Vert) - A(v)$.  Taking limits proves one implication.  Conversely, if $v(\mathfrak{q}) \geq \sigma_{v}(L) - A(v)$ for every $v \in \Val^{*}(X)$, then for any $\epsilon > 0$ we have $\mathfrak{q} \subset \mathcal{J}(\Vert L + \epsilon A \Vert)$ by Theorem \ref{qvaluativedescription} and Lemma \ref{observation}.

Finally, suppose that $w(\mathfrak{q}) \geq \sigma_{w}(L) - A(w)$ for every divisorial valuation $w \in \Val^{*}(X)$.  For an arbitrary $v \in \Val^{*}(X)$, Lemma \ref{divdensity} states that there is a net of divisorial valuations $w_{\alpha}$ converging to $v$ such that $A(v) = \lim_{\to} A(w_{\alpha})$.  Using the continuity of $v(\mathfrak{q})$ and the lower semi-continuity of $\sigma_{v}(L)$ as functions of $v$, we obtain
\begin{equation*}
v(\mathfrak{q}) \geq \sigma_{v}(L) - A(v)
\end{equation*}
by taking limits over this net.  We then conclude by the previous paragraph.
\end{proof}

Since $\sigma_{v}(L)$ is a birational invariant, we obtain a birational transformation rule for $\mathcal{J}_{-}(L)$.

\begin{cor} \label{perturbedbirational}
Let $L$ be a pseudo-effective divisor and let $\phi: Y \to X$ be a birational map from a smooth variety $Y$.  Then
\begin{equation*}
\mathcal{J}_{-}(L) = \phi_{*}(\mathcal{O}_{Y}(K_{Y/X}) \otimes \mathcal{J}_{-}(\phi^{*}L)).
\end{equation*}
\end{cor}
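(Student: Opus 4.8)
The plan is to reduce the birational transformation rule for $\mathcal{J}_{-}$ to the valuative criterion of Proposition \ref{perturbedvaluation}, using the fact that $\sigma_v$ is insensitive to passing to birational models. First I would reduce to the $\mathbb{Q}$-divisor case: if the identity holds for $\mathbb{Q}$-divisors then, since all the objects in sight ($\mathcal{J}_{-}$ and $K_{Y/X}$) are determined by numerical classes and $\mathcal{J}_{-}$ is defined via perturbation by rational ample classes, the general case follows by approximating the numerical class of $L$ — or more simply, one can observe the entire argument below never actually uses rationality of $L$ beyond what Proposition \ref{perturbedvaluation} provides, and remark that the proposition itself extends to $\mathbb{R}$-divisors by the same limit argument.

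The core step: fix a regular function $f$ on a Zariski open $U \subset Y$, or more precisely an ideal sheaf $\mathfrak{q}$ on $Y$; I want to show $\mathfrak{q} \subset \mathcal{J}_{-}(\phi^*L)$ iff the corresponding condition holds on $X$ after twisting by $K_{Y/X}$. By Proposition \ref{perturbedvaluation}, $\mathfrak{q} \subset \mathcal{J}_{-}(\phi^*L)$ iff $v(\mathfrak{q}) \geq \sigma_v(\phi^*L) - A_Y(v)$ for every divisorial valuation $v \in \Val^*(Y)$. The two key compatibilities I would invoke are: (i) every divisorial valuation on $Y$ is naturally a divisorial valuation on $X$ (via $\phi$), and conversely, so $\Val^*(Y)$ and $\Val^*(X)$ are identified; (ii) under this identification the log discrepancies transform by $A_X(v) = A_Y(v) + v(K_{Y/X})$, which is exactly the defining additivity of the relative canonical class on a chain of blow-ups (cf.\ the formula $A(v) = v(K_{Y/X}) + 1$ recalled before Lemma \ref{divdensity}); and (iii) $\sigma_v(\phi^*L) = \sigma_v(L)$ because $\sigma_v$ is computed from asymptotic linear series which pull back isomorphically, and $\phi^*A$ is big and nef so $v(\|\phi^*L + \epsilon\, \phi^*A\|) = v(\|L + \epsilon A\|)$ for any ample $A$ on $X$ — this is where I'd cite that $\sigma_v$ is a birational invariant, as stated in the sentence preceding the corollary.

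Putting these together: the condition $v(\mathfrak{q}) \geq \sigma_v(\phi^*L) - A_Y(v)$ becomes $v(\mathfrak{q}) \geq \sigma_v(L) - (A_X(v) - v(K_{Y/X}))$, i.e.\ $v(\mathfrak{q}) + v(K_{Y/X}) \geq \sigma_v(L) - A_X(v)$, i.e.\ $v\big(\mathfrak{q}\cdot\mathcal{O}_Y(-K_{Y/X})\big) \geq \sigma_v(L) - A_X(v)$ — interpreting $\mathcal{O}_Y(-K_{Y/X})$-twisted sections correctly, this says precisely that the pushforward $\phi_*(\mathcal{O}_Y(K_{Y/X})\otimes\mathfrak{q})$ satisfies the valuative criterion on $X$, hence is contained in $\mathcal{J}_{-}(L)$. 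Applying this with $\mathfrak{q} = \mathcal{J}_{-}(\phi^*L)$ gives the containment $\phi_*(\mathcal{O}_Y(K_{Y/X})\otimes\mathcal{J}_{-}(\phi^*L)) \subset \mathcal{J}_{-}(L)$, and running it with $\mathfrak{q}$ the ideal cutting out (the non-$K_{Y/X}$ part of) $\mathcal{J}_{-}(L)$ pulled back to $Y$ gives the reverse. I would double-check that Proposition \ref{perturbedvaluation} genuinely only needs divisorial valuations for both implications — it does — since that is what lets me sidestep any subtlety about whether a net of divisorial valuations on $Y$ matches one on $X$.

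**The main obstacle** is bookkeeping around the twist by $\mathcal{O}_Y(K_{Y/X})$: a section of $\mathcal{O}_X(K_X)\otimes\mathfrak{q}$, pulled back, should be read as a section of $\mathcal{O}_Y(K_Y)\otimes(\text{something})$, and one must be careful that "$v(\mathfrak{q}) + v(K_{Y/X})$" correctly records the order of vanishing of a rational function twisted by the relative canonical divisor — i.e.\ that $\phi_*(\mathcal{O}_Y(K_{Y/X})\otimes\mathfrak{b})$ is, as a subsheaf of $\mathcal{O}_X$, exactly $\{f : v(f) \geq -v(K_{Y/X}) + v(\mathfrak{b}) \text{ for all divisorial } v\}$, where $v(K_{Y/X})$ can be negative for valuations not exceptional over $X$. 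Spelling this out carefully — choosing a single model $Y$ dominating all the relevant exceptional divisors, working with honest Weil divisors there — is the only genuinely delicate point; everything else is a formal substitution. I would handle it by first passing to a common model where both $\mathfrak{q}$ and $K_{Y/X}$ are realized by SNC divisors, so that the twisted pushforward is literally $f_*\mathcal{O}_Z(K_{Z/X} - F)$ for an effective divisor $F$, and then the valuative description is the standard one.
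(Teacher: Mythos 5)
Your proposal is correct and follows exactly the route the paper intends: the paper offers no written proof, simply asserting that the corollary follows from the birational invariance of $\sigma_{v}$ together with the divisorial valuative criterion of Proposition \ref{perturbedvaluation}, which is precisely the substitution $A_{X}(v)=A_{Y}(v)+v(K_{Y/X})$, $\sigma_{v}(\phi^{*}L)=\sigma_{v}(L)$ that you carry out. Your extra care about extending Proposition \ref{perturbedvaluation} from $\mathbb{Q}$- to $\mathbb{R}$-divisors and about the $\mathcal{O}_{Y}(K_{Y/X})$-twist bookkeeping only makes explicit what the paper leaves to the reader.
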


\subsection{Balanced Decomposition}

We next show that the big cone admits a chamber decomposition such that $\mathcal{J}(\Vert L \Vert)$ is constant for $L$ in the interior of a chamber.  We start with a local description.

\begin{prop} \label{localdecomp}
Let $L$ be a big divisor.  There is a neighborhood $U \subset N^{1}(X)$ of $c_{1}(L)$ such that
for any divisor $D$ with $c_{1}(D) \in U$ we have
\begin{equation*}
\mathcal{J}(\Vert L \Vert) \subset \mathcal{J}(\Vert D \Vert) \subset \mathcal{J}_{-}(L).
\end{equation*}
\end{prop}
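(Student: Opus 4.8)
The plan is to combine the valuative description of asymptotic multiplier ideals (Theorem \ref{qvaluativedescription}) with the continuity of $v \mapsto v(\Vert - \Vert)$ on the big cone (Theorem \ref{bigconefunction}), together with the two one-sided perturbation results already established: Corollary \ref{nestedideals}, Proposition \ref{augmentedideal}, and Definition \ref{perturbedidealdefn}. The first containment $\mathcal{J}(\Vert L \Vert) \subset \mathcal{J}(\Vert D \Vert)$ should hold whenever $c_1(D)$ is close enough that $D - \delta L$ stays big for a suitable $\delta$, using the trick from the proof of Proposition \ref{augmentedideal}: write $D \equiv (1-\delta)L + (\delta L + (D-L))$, where the second summand is big, and compare valuations via Theorem \ref{qvaluativedescription}. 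The subtlety is that I want a \emph{single} neighborhood $U$ that works simultaneously for both containments, so I will choose $U$ small enough that all the auxiliary divisors appearing below remain big (an open condition).

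First I would fix an ample divisor $A$. By Proposition \ref{augmentedideal} there is $\epsilon_0 > 0$ with $\mathcal{J}(\Vert L - \epsilon_0 A \Vert) = \mathcal{J}(\Vert L \Vert)$, and by Corollary \ref{nestedideals} (in its big-divisor form) there is $\epsilon_1 > 0$ with $\mathcal{J}(\Vert L + \epsilon_1 A \Vert) = \mathcal{J}_{-}(L)$ — shrinking so that this lands on the stabilized ideal in the chain of Definition \ref{perturbedidealdefn}. Now let $U$ be a convex neighborhood of $c_1(L)$ small enough that for every class $\beta \in U$ the divisors $L - \epsilon_0 A$, $\beta$ itself, and $L + \epsilon_1 A - \beta$ are all big; this is possible since bigness is an open condition and all three classes are big when $\beta = c_1(L)$ (the last being $\epsilon_1 A$). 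Then for any $D$ with $c_1(D) \in U$: on one hand $D \equiv (L - \epsilon_0 A) + (D - L + \epsilon_0 A)$, and I want the second summand to be big — so I additionally shrink $U$ so that $D - L + \epsilon_0 A$ is big, whence for every valuation $v$, $v(\Vert D \Vert) \le v(\Vert L - \epsilon_0 A \Vert) = v(\Vert L \Vert)$ by monotonicity under adding an effective/big class, and Theorem \ref{qvaluativedescription} gives $\mathcal{J}(\Vert L \Vert) \subset \mathcal{J}(\Vert D \Vert)$. On the other hand $L + \epsilon_1 A \equiv D + (L + \epsilon_1 A - D)$ with the second summand big by our choice of $U$, so $v(\Vert L + \epsilon_1 A \Vert) \le v(\Vert D \Vert)$ for every $v$, hence $\mathcal{J}(\Vert D \Vert) \subset \mathcal{J}(\Vert L + \epsilon_1 A \Vert) = \mathcal{J}_{-}(L)$ again by Theorem \ref{qvaluativedescription}.

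The only real care needed is bookkeeping: I must verify that the monotonicity statement "$H$ big $\Rightarrow v(\Vert M + H \Vert) \le v(\Vert M \Vert)$" is available — it follows from the fact that $v(\Vert - \Vert)$ does not increase under adding an effective divisor together with Theorem \ref{bigconefunction} (approximate $H$ by $\mathbb{Q}$-divisors $H'$ with $H - H'$ still big, so some positive multiple of $M + H'$ has a section, giving the inequality at the level of $\mathbb{Q}$-divisors, then pass to the limit using continuity on the big cone). I expect the main obstacle to be purely organizational: arranging the finitely many openness conditions (bigness of $L - \epsilon_0 A$, of $D - L + \epsilon_0 A$, and of $L + \epsilon_1 A - D$) so that a single neighborhood $U$ serves for all $D$ with $c_1(D) \in U$ — and making sure the choices of $\epsilon_0, \epsilon_1$ are made \emph{before} $U$ is chosen, since they depend only on $L$ and $A$. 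Once that ordering is fixed the argument is a direct application of the valuative criterion.
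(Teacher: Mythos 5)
Your overall strategy is the same as the paper's: perturb $L$ down by a small ample $\epsilon_0 A$ so that $\mathcal{J}(\Vert L-\epsilon_0 A\Vert)=\mathcal{J}(\Vert L\Vert)$ (Proposition \ref{augmentedideal}), perturb up by $\epsilon_1 A$ so that $\mathcal{J}(\Vert L+\epsilon_1 A\Vert)=\mathcal{J}_{-}(L)$ (Definition \ref{perturbedidealdefn}), and sandwich every $D$ with class in a small neighborhood between the two by an openness argument. The paper does exactly this, citing Corollary \ref{plusamplecontainment} for the two comparisons.

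There is, however, one step in your write-up that is false as stated: the monotonicity claim ``$H$ big $\Rightarrow v(\Vert M+H\Vert)\leq v(\Vert M\Vert)$,'' and the auxiliary assertion that $v(\Vert-\Vert)$ does not increase under adding an \emph{effective} divisor. Adding an effective (or merely big) class can strictly increase asymptotic valuations: on the blow-up $\pi\colon S\to\mathbb{P}^2$ of a point with exceptional curve $E$, take $M=\pi^{*}H$ and $H'=\pi^{*}H+E$ (big); then $v_{E}(\Vert M\Vert)=0$ while $v_{E}(\Vert M+H'\Vert)=\sigma_{E}(2\pi^{*}H+E)=1$, since $E$ lies in the negative part of the Zariski decomposition. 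Monotonicity only holds for \emph{ample} perturbations (Lemma \ref{observation}, Corollary \ref{plusamplecontainment}). The repair is immediate and costs nothing: shrink $U$ so that $D-(L-\epsilon_0 A)$ and $(L+\epsilon_1 A)-D$ are \emph{ample} rather than big. Both conditions hold at $D\equiv L$ (the classes are $\epsilon_0 A$ and $\epsilon_1 A$), and ampleness is just as much an open condition on $N^{1}(X)$ as bigness, so your neighborhood argument goes through verbatim; with that substitution your proof coincides with the paper's.
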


%\begin{prop} \label{localdecomp}
%Let $L$ be a big divisor.  There is a neighborhood $U \subset N^{1}(X)$ of $c_{1}(L)$ such that
%\begin{enumerate}
%\item For any divisor $D$ with $c_{1}(D) \in U$ we have
%\begin{equation*}
%\mathcal{J}(\Vert L \Vert) \subset \mathcal{J}(\Vert D \Vert) \subset \mathcal{J}_{-}(L).
%\end{equation*}
%\item The set $\{ \mathcal{J}(\Vert D \Vert) \}_{c_{1}(D) \in U}$ is finite.
%\end{enumerate}
%\end{prop}

\begin{proof}
Apply Proposition  \ref{augmentedideal} to find some small ample divisor $H$ so that $\mathcal{J}(\Vert L - H \Vert) = \mathcal{J}(\Vert L \Vert)$.  There is an open neighborhood $U$ of $c_{1}(L)$ such that $\overline{U} \subset c_{1}(L-H)+Amp(X)$.   %Applying Construction \ref{haconconstr} to $L-H$ and the $H_{i}$ shows that as we vary $D$ so that $c_{1}(D) \in U$ we obtain only finitely many asymptotic multiplier ideals.
Note that every $D$ with $c_{1}(D) \in U$ is of the form $D \equiv L-H+A$ for some ample $A$.  Thus $\mathcal{J}(\Vert L \Vert) \subset \mathcal{J}(\Vert D \Vert)$ by Corollary \ref{plusamplecontainment}.

Similarly, if we choose an ample $A$ so that $\mathcal{J}(\Vert L + A \Vert) = \mathcal{J}_{-}(L)$, then by shrinking $U$ we may ensure that $U \subset c_{1}(L + A) - Amp(X)$ yielding the second containment.
\end{proof}

To translate Proposition \ref{localdecomp} to the global setting, we introduce the following definition.

\begin{defn}
Let $L$ be a big divisor.  We say that $L$ is balanced if $\mathcal{J}(\Vert L \Vert) = \mathcal{J}_{-}(L)$.
\end{defn}

\begin{thrm} %\label{balanceddecomposition}
There is a unique collection of closed connected subsets $C_{i} \subset \overline{NE}^{1}(X)$ such that
\begin{enumerate}
\item $C_{i} = \overline{C_{i}^{\circ}}$ and $\overline{NE}^{1}(X) = \bigcup C_{i}$.
\item A big divisor $L$ is balanced precisely when $c_{1}(L)$ is in the interior of one of the $C_{i}$.
%\item For any compact set $K \subset Big(X)$, there are only finitely many $C_{i}$ that have non-empty intersection with $K$.
\end{enumerate}
We call the $C_{i}$ balanced regions.
\end{thrm}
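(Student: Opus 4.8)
The plan is to let the $C_{i}$ be the closures of the connected components of the \emph{balanced locus}
\[
\mathcal{B} := \{\, c_1(L) \mid L \text{ big and balanced} \,\} \subset N^1(X),
\]
and to verify conditions (1) and (2) directly, the bulk of the work being a continuity analysis of the map $L \mapsto \mathcal{J}(\Vert L \Vert)$. First I would show that $\mathcal{B}$ is open and that $\mathcal{J}(\Vert - \Vert)$ is locally constant on it; being locally constant on the locally connected space $N^1(X)$, it is then constant on each connected component of $\mathcal{B}$. If $L$ is balanced, Proposition \ref{localdecomp} produces a neighborhood $U$ of $c_1(L)$ with $\mathcal{J}(\Vert L \Vert) \subset \mathcal{J}(\Vert D \Vert) \subset \mathcal{J}_{-}(L)$ for every $D$ with $c_1(D) \in U$; since $L$ is balanced the two outer ideals agree, so $\mathcal{J}(\Vert D \Vert)$ is the constant ideal $\mathcal{J}(\Vert L \Vert)$ on $U$. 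Moreover each such $D$ is again balanced: by Definition \ref{perturbedidealdefn} we have $\mathcal{J}_{-}(D) = \mathcal{J}(\Vert D + \epsilon A \Vert)$ for $\epsilon$ small, and shrinking $\epsilon$ until $c_1(D + \epsilon A) \in U$ forces $\mathcal{J}_{-}(D) = \mathcal{J}(\Vert D \Vert)$. Hence $U \subset \mathcal{B}$, as desired.

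Next I would prove that $\mathcal{B}$ is dense in $\overline{NE}^{1}(X)$. Fix a big divisor $M$ and an ample divisor $A$, and consider $g(t) = \mathcal{J}(\Vert M + tA \Vert)$ on the interval where $M + tA$ is big. By Corollary \ref{plusamplecontainment}, $g$ is nondecreasing in the containment order; running the Nadel vanishing plus Castelnuovo--Mumford regularity argument from the proof of Theorem \ref{haconconstr} over a compact subinterval shows that $g$ takes only finitely many values there; hence $g$ is a step function with finitely many jumps. If $t_{0}$ is not a jump point, then $g(t_{0} + \epsilon) = g(t_{0})$ for $\epsilon$ small, and the left side equals $\mathcal{J}_{-}(M + t_{0}A)$ (Definition \ref{perturbedidealdefn}, using the ample class $A$); thus $M + t_{0}A$ is balanced. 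So balanced classes are dense along every ray through a big class in an ample direction, and since every $\beta \in \overline{NE}^{1}(X)$ is a limit of such classes as the ray parameter tends to $0$, we get $\overline{\mathcal{B}} = \overline{NE}^{1}(X)$.

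Now set $C_{i} = \overline{B_{i}}$, where the $B_{i}$ are the connected components of $\mathcal{B}$. Each $C_{i}$ is closed and connected, $\bigcup_{i} C_{i} = \overline{\mathcal{B}} = \overline{NE}^{1}(X)$, and since $B_{i} \subset C_{i}^{\circ}$ is dense in $C_{i}$ we have $C_{i} = \overline{C_{i}^{\circ}}$; this gives (1). One direction of (2) is immediate: a balanced $L$ has $c_1(L) \in B_{i} \subset C_{i}^{\circ}$ for some $i$. The converse is the main obstacle, as it amounts to ruling out non-balanced ``slits'' inside $\overline{B_{i}}$. Suppose $c_1(L) \in C_{i}^{\circ}$ and let $\mathfrak{a}_{i}$ be the constant value of $\mathcal{J}(\Vert - \Vert)$ on $B_{i}$; since the $B_{j}$ are open and pairwise disjoint, $\overline{B_{i}} \cap \mathcal{B} = B_{i}$, so every balanced class in a small ball around $c_1(L)$ lies in $B_{i}$. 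Because $\mathcal{B}$ is dense and $c_1(L) - Amp(X)$ accumulates at $c_1(L)$, I can pick $D_{1}$ with $c_1(D_{1}) \in B_{i}$ and $L \equiv D_{1} + H_{1}$ for some ample $H_{1}$; then Corollary \ref{plusamplecontainment} gives $\mathfrak{a}_{i} = \mathcal{J}(\Vert D_{1} \Vert) \subset \mathcal{J}(\Vert L \Vert)$. Choosing $\epsilon$ small enough that $c_1(L + \epsilon A) \in C_{i}^{\circ}$, that $\mathcal{J}_{-}(L) = \mathcal{J}(\Vert L + \epsilon A \Vert)$, and (by the step-function argument above, applied to the ray $L + tA$) that $L + \epsilon A$ is balanced, we get $c_1(L + \epsilon A) \in B_{i}$ and hence $\mathcal{J}_{-}(L) = \mathfrak{a}_{i}$. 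Combining the two bounds, $\mathfrak{a}_{i} \subset \mathcal{J}(\Vert L \Vert) \subset \mathcal{J}_{-}(L) = \mathfrak{a}_{i}$, so $L$ is balanced. In particular $C_{i}^{\circ} = B_{i}$, which also settles any regularity concern about the $C_{i}$.

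Finally, for uniqueness: any collection $\{C_{i}\}$ satisfying (1) and (2) has $\bigcup_{i} C_{i}^{\circ}$ equal to the intrinsically defined locus $\mathcal{B}$; since each $C_{i} = \overline{C_{i}^{\circ}}$ is connected and $\mathcal{J}(\Vert - \Vert)$ is locally constant on $\mathcal{B}$, the open sets $C_{i}^{\circ}$ are forced to be exactly the connected components of $\mathcal{B}$, and then each $C_{i} = \overline{C_{i}^{\circ}}$ is determined. I expect the step-function density argument and the ``sandwich from above and below'' in the third paragraph to be the delicate points; everything else is formal point-set topology on top of Proposition \ref{localdecomp}, Corollary \ref{plusamplecontainment} and Theorem \ref{haconconstr}.
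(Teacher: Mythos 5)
Your construction is the same as the paper's: take the $C_{i}$ to be the closures of the connected components of the balanced locus, get openness from Proposition \ref{localdecomp}, and get density by perturbing in an ample direction. The difference is one of thoroughness rather than of method. The paper disposes of condition (2) and of uniqueness with ``the statement follows,'' whereas you actually prove the nontrivial half of (2) --- that a big class in $C_{i}^{\circ}$ is balanced --- and your sandwich $\mathfrak{a}_{i} \subset \mathcal{J}(\Vert L \Vert) \subset \mathcal{J}_{-}(L) = \mathfrak{a}_{i}$ is correct and is the real content there; in particular it yields $C_{i}^{\circ} = B_{i}$, ruling out the ``slits'' you worry about. Your density argument (monotonicity of $t \mapsto \mathcal{J}(\Vert M + tA \Vert)$ plus Theorem \ref{haconconstr} plus Noetherianity forces a step function) is a slightly longer route than the paper's, which simply observes that once $\mathcal{J}(\Vert L + \epsilon A \Vert)$ has stabilized at $\mathcal{J}_{-}(L)$, \emph{every} $L + \tau A$ with $0 < \tau < \epsilon$ is balanced; both work, but the stronger statement is worth having (see below).

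Two soft spots. First, you assert $\bigcup_{i} C_{i} = \overline{\mathcal{B}}$, but a union of closures is only contained in the closure of the union when there may be infinitely many components; to see that each pseudo-effective $\beta$ lies in the closure of a \emph{single} component you should use that the whole connected segment $\{\beta + \tau A : 0 < \tau < \epsilon\}$ is balanced (the stabilization argument just mentioned), hence sits inside one $B_{i}$. Second, the uniqueness paragraph does not go through as written: conditions (1) and (2) force $\bigcup_{i} C_{i}^{\circ} = \mathcal{B}$, but they do not force each $C_{i}^{\circ}$ to be a full component of $\mathcal{B}$ --- for instance, two overlapping closed connected sets, each the closure of its interior, whose interiors jointly cover a single component $B_{j}$ and whose union is $\overline{B_{j}}$, satisfy both conditions, so the step ``the open sets $C_{i}^{\circ}$ are forced to be exactly the connected components of $\mathcal{B}$'' fails. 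Some normalization (pairwise disjoint interiors, or maximality of the $C_{i}$) is needed to make the uniqueness claim literally true; to be fair, the paper offers no argument for uniqueness at all, so this is a defect of the statement as much as of your proof.
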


\begin{proof}
We first show that the set of classes of balanced divisors is open and dense in $\overline{NE}^{1}(X)$.  The openness follows from Proposition \ref{localdecomp} (1).  To show density, suppose that $L$ is any pseudo-effective divisor.  Fix an ample divisor $A$ and choose $\epsilon$ sufficiently small so that $\mathcal{J}_{-}(L) = \mathcal{J}(\Vert L + \epsilon A \Vert)$.  Then by Proposition \ref{localdecomp} the divisor $L+\tau A$ is balanced for any positive $\tau < \epsilon$.

Define $C_{i}$ to be the closure of a connected component of the set of balanced divisors.  Since the set of classes of balanced divisors is open and dense, the statement follows.  %The third follows from Proposition \ref{localdecomp} (2).
\end{proof}

While the chamber decomposition behaves well inside the big cone, its behavior along the pseudo-effective boundary is more subtle.  We return to this question in Section \ref{diminishedsection}.

\begin{rmk}
In \cite{elmnp05} a divisor is called stable if $\mathbf{B}_{-}(L) = \mathbf{B}_{+}(L)$.  The authors show that $\overline{NE}^{1}(X)$ can be decomposed into regions in which every divisor is stable.    Despite the close relationship between $\mathbf{B}_{-}(L)$ and asymptotic multiplier ideals, the two decompositions have somewhat different behavior.

For example, a big and nef divisor is always balanced (and thus contained in the \emph{interior} of a balanced region).  However, a big and nef divisor that is not ample will be on the boundary of a stable region.
\end{rmk}

%\begin{lem} \label{trivializingbigs}
%Let $C \subset N^{1}(X)$ be a compact subset of the big cone.  There is some constant $\mu > 0$ such that $\mathcal{J}(\Vert \mu L \Vert) = \mathcal{O}_{X}$ for every $L$ with class in $C$.
%\end{lem}

%\begin{proof}
%For any big divisor $L$ there is a positive constant $\mu_{L}$ such that $\mathcal{J}(\mu_{L}L) = \mathcal{O}_{X}$.  Theorem \ref{localdecomp} shows that there is an open neighborhood $U$ of $[L]$ such that $\mathcal{J}(\mu_{L}D) = \mathcal{O}_{X}$ for any $D$ with class in $U$.  Choose a finite cover of $C$ by such neighborhoods and set $\mu$ to be the minimum of the corresponding $\mu_{L}$.
%\end{proof}

As a consequence of Theorem \ref{localdecomp} we show that $\mathfrak{q}$-log canonical thresholds are continuous functions on the big cone.

\begin{thrm} \label{lctcont}
Let $P \subset \overline{NE}^{1}(X)$ denote the open set of classes that are big but not nef.  Then $\lct^{\mathfrak{q}}(\Vert - \Vert)$ is a continuous function on $P$.
\end{thrm}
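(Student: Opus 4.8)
The plan is to prove the two semicontinuity inequalities separately and combine them. The tool I will use repeatedly is the following reformulation: for a big divisor $D$ and a real number $c$,
\begin{equation*}
\lct^{\mathfrak{q}}(\Vert D \Vert) > c \quad \Longleftrightarrow \quad \mathfrak{q} \subset \mathcal{J}(\Vert cD \Vert).
\end{equation*}
To justify this, recall from the proof of Theorem \ref{qvaluativedescription} (that is, \cite{jm10} Theorem 7.3) that $\lct^{\mathfrak{q}}(\Vert D \Vert) = \inf_{v \in \Val^{*}(X)} \tfrac{A(v) + v(\mathfrak{q})}{v(\Vert D \Vert)}$ is \emph{attained} by some valuation; consequently $\lct^{\mathfrak{q}}(\Vert D \Vert) > c$ holds precisely when $A(v) + v(\mathfrak{q}) > c\, v(\Vert D \Vert)$ for every $v \in \Val^{*}(X)$, and by Corollary \ref{rescaleval} (the big case) this is the inequality $v(\mathfrak{q}) > v(\Vert cD \Vert) - A(v)$, which by Theorem \ref{qvaluativedescription} is equivalent to $\mathfrak{q} \subset \mathcal{J}(\Vert cD \Vert)$. (Here I use that log discrepancies of nontrivial valuations are positive, so the inequality remains correct at valuations with $v(\Vert D \Vert) = 0$.)

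Upper semicontinuity is then essentially formal, and holds on all of $\mathrm{Big}(X)$. For each fixed $v \in \Val^{*}(X)$ the numerator $A(v) + v(\mathfrak{q})$ is a positive constant, while $D \mapsto v(\Vert D \Vert)$ is continuous and nonnegative on $\mathrm{Big}(X)$ by Theorem \ref{bigconefunction}; hence $D \mapsto \tfrac{A(v) + v(\mathfrak{q})}{v(\Vert D \Vert)}$ is a continuous $[0,\infty]$-valued function. Since $\lct^{\mathfrak{q}}(\Vert D \Vert)$ is a pointwise infimum of such functions, it is upper semicontinuous, and a fortiori so on $P$.

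For lower semicontinuity at a class $c_{1}(L) \in P$, fix any $c$ with $0 < c < \lct^{\mathfrak{q}}(\Vert L \Vert)$ (if $\lct^{\mathfrak{q}}(\Vert L \Vert) = 0$ there is nothing to prove, as the function is nonnegative, and if it is $+\infty$ the argument below works for arbitrary $c$). By the reformulation $\mathfrak{q} \subset \mathcal{J}(\Vert cL \Vert)$. The divisor $cL$ is again big, so Proposition \ref{localdecomp} applied to $cL$ furnishes a neighborhood $W \subset N^{1}(X)$ of $c_{1}(cL)$ such that $\mathcal{J}(\Vert cL \Vert) \subset \mathcal{J}(\Vert D' \Vert)$ for every divisor $D'$ with $c_{1}(D') \in W$. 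Set $U = \tfrac{1}{c} W$, an open neighborhood of $c_{1}(L)$; after shrinking $U$ we may assume every class in $U$ is big, and we then intersect with the open cone $P$. For any big, non-nef $D$ with $c_{1}(D) \in U \cap P$ we have $c_{1}(cD) = c\, c_{1}(D) \in W$, whence $\mathfrak{q} \subset \mathcal{J}(\Vert cL \Vert) \subset \mathcal{J}(\Vert cD \Vert)$, and therefore $\lct^{\mathfrak{q}}(\Vert D \Vert) > c$ by the reformulation. Letting $c$ increase to $\lct^{\mathfrak{q}}(\Vert L \Vert)$ gives lower semicontinuity at $c_{1}(L)$, and combined with the previous paragraph this yields continuity on $P$.

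The only substantive input is Proposition \ref{localdecomp} (which in turn rests on the Hacon-type finiteness of Theorem \ref{haconconstr}); everything else is bookkeeping. I expect the point requiring the most care — rather than a genuine obstacle — to be the rescaling device of replacing $D$ by $cD$, which is what turns the single-class statement of Proposition \ref{localdecomp} into a bound on the threshold uniform over a neighborhood, together with the use of the ``$\lct$ is attained'' property that legitimizes passing between the strict inequality $\lct^{\mathfrak{q}}(\Vert D \Vert) > c$ and the containment $\mathfrak{q} \subset \mathcal{J}(\Vert cD \Vert)$.
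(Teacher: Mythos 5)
Your proof is correct and follows essentially the same route as the paper: upper semicontinuity from writing $\lct^{\mathfrak{q}}$ as an infimum of continuous functions via Theorem \ref{bigconefunction}, and lower semicontinuity from Proposition \ref{localdecomp} combined with the equivalence $\lct^{\mathfrak{q}}(\Vert D \Vert) > c \iff \mathfrak{q} \subset \mathcal{J}(c\Vert D \Vert)$. The only difference is that you make explicit the rescaling by $c$ (applying Proposition \ref{localdecomp} to $cL$ and invoking Corollary \ref{rescaleval}) that the paper's terser argument leaves implicit.
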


The theorem does not apply to big and nef divisors $A$ since these are precisely the divisors with $\lct^{\mathfrak{q}}(\Vert A \Vert) = \infty$.  This distinction is somewhat artificial: the Arnold multiplicity (i.e.~the inverse of the log canonical threshold) is a continuous function everywhere on the big cone.

\begin{proof}
Recall that $\lct^{\mathfrak{q}}(\Vert - \Vert)$ is defined as
\begin{equation*}
\lct^{\mathfrak{q}}(\Vert L \Vert) := \inf_{v \in \Val^{*}(X)} \frac{A(v) + v(\mathfrak{q})}{v(\Vert L \Vert)}.
\end{equation*}
Since $v(\Vert - \Vert)$ is continuous on the big cone, $\lct^{\mathfrak{q}}(\Vert - \Vert)$ is upper semi-continuous on the big cone and it suffices to prove lower semi-continuity.

Let $L$ be a big divisor and $c$ be a positive constant such that $\mathfrak{q} \subset \mathcal{J}(c\Vert L \Vert)$.  By Theorem \ref{localdecomp} there is a neighborhood $U$ of $[L]$ such that $\mathfrak{q} \subset \mathcal{J}(c \Vert D \Vert)$ for every $D$ with class in $U$.  Equivalently, if $\lct^{\mathfrak{q}}(\Vert L \Vert) > c$ then $\lct^{\mathfrak{q}}(\Vert D \Vert) > c$ for every $D$ with class in $U$, showing lower semi-continuity on $P$.
\end{proof}

\section{Analytic Multiplier Ideals} \label{analyticsection}

In this section we recall how valuation theory can be used to relate algebraic and analytic multiplier ideals.  Instead of metrics we will use positive currents which are more suitable for working with $\mathbb{R}$-divisors.  We then apply the results of \cite{boucksom04} to relate multiplier ideals of currents of minimal singularities to divisorial valuations.

Most of the material in this section is not new; we include it as a reminder about the basic facts concerning analytic multiplier ideals and to identify precisely the results we will need later.  A thorough introduction to this circle of ideas can be found in \cite{demailly12}.

\subsection{Positive Currents}

A $(p,q)$-current is a differential $(p,q)$-form with distribution coefficients.  Just as with smooth forms, any $(p,q)$-current can be written locally as $T = \overline{\partial} u$ for some $(p,q-1)$-current $u$.   Thus, one can define cohomology spaces by quotienting the space of $\overline{\partial}$-closed $(p,q)$-currents by the $\overline{\partial}$-exact ones.  These cohomology spaces turn out to be isomorphic to the usual Dolbeault cohomology $H^{p,q}(X,\mathbb{C})$.

In the special case of $(p,p)$-currents, we say that $T$ is real if it is fixed by the natural involution induced by the complex structure.  Since we will only be interested in $\overline{\partial}$-closed real currents $T$, from now on we will simply use ``current'' to denote such an object.  We can naturally identify the cohomology class of a (closed) $(p,p)$-current inside of $H^{p,p}(X,\mathbb{R}) \subset H^{2p}(X,\mathbb{R})$.  In particular, to any $(1,1)$-current $T$ we can associate a class $c_{1}(T)$ in $N^{1}(X)$.

\begin{defn}
Let $X$ be an $n$-dimensional complex projective manifold and let $T$ be a $(1,1)$-current.  We say that $T$ is positive and write $T \geq 0$ if it has non-negative pairing against every form
\begin{equation*}
\theta = i \alpha_{1} \wedge \overline{\alpha_{1}} \wedge \ldots \wedge i\alpha_{n-1} \wedge \overline{\alpha_{n-1}}.
\end{equation*}
\end{defn}

Positive $(1,1)$-currents are precisely those that can be written locally as $\frac{i}{\pi} \partial \overline{\partial} \varphi$ for a plurisubharmonic function $\varphi$ known as a local weight.  The following theorem of Demailly indicates that positive currents are closely linked to pseudo-effective divisors.

\begin{thrm} [\cite{demailly92}, Proposition 4.2]
Let $X$ be a smooth variety and let $L$ be an $\mathbb{R}$-divisor on $X$.  Then $L$ is pseudo-effective iff there is a positive current $T$ such that $c_{1}(T) = c_{1}(L)$.
\end{thrm}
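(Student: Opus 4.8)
The plan is to show that a class in $N^{1}(X)$ is pseudo-effective precisely when it is the numerical class $c_{1}(T)$ of some closed positive $(1,1)$-current $T$, treating the two implications separately. For the forward implication, first note that by definition $c_{1}(L)$ pseudo-effective means $c_{1}(L)=\lim_{k}c_{1}(D_{k})$ for effective $\mathbb{R}$-divisors $D_{k}=\sum_{i}a_{i}^{(k)}\Gamma_{i}$ with $a_{i}^{(k)}\geq 0$. Each $D_{k}$ carries the current of integration $T_{k}=\sum_{i}a_{i}^{(k)}[\Gamma_{i}]$, which is closed and positive with $c_{1}(T_{k})=c_{1}(D_{k})$. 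Fixing a Kähler form $\omega$, the masses $\int_{X}T_{k}\wedge\omega^{n-1}$ depend only on $c_{1}(D_{k})$ and are therefore bounded since $c_{1}(D_{k})\to c_{1}(L)$ in the finite-dimensional space $N^{1}(X)$. By the weak compactness of the set of positive currents of bounded mass I can pass to a weakly convergent subsequence $T_{k}\to T$; a weak limit of closed positive currents is again closed and positive, and since the cohomology class of a current is determined by its pairings against closed smooth forms and these pairings pass to the limit, $c_{1}(T)=\lim_{k}c_{1}(D_{k})=c_{1}(L)$.

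For the reverse implication, suppose $T\geq 0$ is closed with $c_{1}(T)=c_{1}(L)$. The cleanest route is to fix an ample class $H$ with smooth Kähler representative $\omega_{H}$ and observe that $T+\epsilon\omega_{H}\geq\epsilon\omega_{H}$ is a Kähler current in the class $c_{1}(L)+\epsilon H$; invoking the fact that on a projective manifold a numerical class carrying a Kähler current is big, one gets $c_{1}(L)+\epsilon H\in\mathrm{Big}(X)$ for every $\epsilon>0$, hence $c_{1}(L)=\lim_{\epsilon\to 0}\bigl(c_{1}(L)+\epsilon H\bigr)$ is pseudo-effective. An alternative is to test $c_{1}(L)$ against the dual cone of $\overline{NE}^{1}(X)$: writing a movable curve class $\gamma$ as a limit of pushforwards $\pi_{k*}(\omega_{1}^{(k)}\wedge\cdots\wedge\omega_{n-1}^{(k)})$ of wedges of Kähler forms from smooth birational models $\pi_{k}\colon X_{k}\to X$, one computes $c_{1}(L)\cdot\gamma=\lim_{k}\int_{X_{k}}\pi_{k}^{*}T\wedge\omega_{1}^{(k)}\wedge\cdots\wedge\omega_{n-1}^{(k)}\geq 0$, using that the pullback $\pi_{k}^{*}T$ of a closed positive $(1,1)$-current under a dominant holomorphic map is again closed and positive and that its wedge with smooth positive forms is a positive measure.

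The hard part is the reverse implication: it is exactly there that one needs genuine analytic input beyond formal manipulation of currents — either Demailly's regularization and holomorphic Morse inequality results identifying bigness of a class with the existence of a Kähler current in it, or the duality between $\overline{NE}^{1}(X)$ and the movable cone of curves, whose nontrivial inclusion ultimately rests on the same techniques. The forward implication, by contrast, is entirely soft: it uses only the weak compactness of positive currents, the continuity of the class map, and the positivity of currents of integration, all of which are recalled in \cite{demailly12}.
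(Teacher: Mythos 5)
The paper offers no proof of this statement; it is quoted verbatim from Demailly's 1992 paper, so there is nothing internal to compare against. Your argument is correct and is essentially Demailly's original one. The forward direction (currents of integration $[D_k]$, mass bounded by the cohomological pairing $c_1(D_k)\cdot\omega^{n-1}$, weak compactness, and closedness/positivity/class all passing to the weak limit) is complete as written. For the reverse direction your first route --- perturb $T$ to the K\"ahler current $T+\epsilon\omega_H$ and invoke the criterion that a class in $N^1(X)$ containing a K\"ahler current is big --- is exactly how the cited reference proceeds; you are right that this step is the genuine analytic content (regularization of currents plus holomorphic Morse inequalities) and cannot be made ``soft.'' Your alternative via the movable-cone duality is also valid but strictly heavier, since the inclusion you need is the hard half of the \cite{bdpp04} theorem, which itself relies on the same regularization technology; for the purposes of this paper the first route is the right one to record.
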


As suggested by the previous theorem, to a pseudo-effective $\mathbb{R}$-divisor $L$ it is natural to associate the collection of positive $(1,1)$-currents
\begin{equation*}
\mathcal{T}(L) := \{ T \geq 0 \, | \, c_{1}(T) = c_{1}(L) \}.
\end{equation*}

When $L$ is a Cartier divisor, this association is justified by the following lemma.

\begin{lem}[\cite{eckl04}, Lemma 4.1] \label{numericallytrivialmultiplierideal}
Let $X$ be a smooth variety and $L$ a line bundle on $X$.  For every closed positive current $T$ with $c_{1}(T)=c_{1}(L)$ there is a singular hermitian metric $h$ on $L$ with curvature current $T$.
\end{lem}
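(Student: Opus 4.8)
The plan is to realize $T$ as the curvature current of a singular metric obtained by perturbing a fixed smooth reference metric on $L$ by a global quasi-plurisubharmonic weight. First I would fix any smooth hermitian metric $h_{0}$ on $L$ and let $\theta$ denote its curvature form, a smooth closed real $(1,1)$-form with cohomology class $c_{1}(L)$ in $H^{1,1}(X,\mathbb{R})$. Since by hypothesis $c_{1}(T) = c_{1}(L)$, the difference $T - \theta$ is a closed real $(1,1)$-current whose cohomology class is zero, hence it is $d$-exact. Because $X$ is projective, and so compact Kähler, the $\partial\overline{\partial}$-lemma applied at the level of currents produces a real function $\varphi \in L^{1}(X)$, unique up to an additive constant, with
\begin{equation*}
T - \theta = \frac{i}{\pi}\partial\overline{\partial}\varphi .
\end{equation*}

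Next I would verify that $\varphi$ serves as a global weight for a singular hermitian metric. On any open set $U$ over which $L$ is trivialized, write $h_{0}$ in terms of a smooth local weight $\psi$ so that $\theta|_{U} = \frac{i}{\pi}\partial\overline{\partial}\psi$; then $T|_{U} = \frac{i}{\pi}\partial\overline{\partial}(\psi + \varphi)$, and the positivity $T \geq 0$ forces $\psi + \varphi$ to agree almost everywhere with a plurisubharmonic function on $U$. Replacing $\varphi$ by its upper-semicontinuous regularization — which alters neither its class in $L^{1}(X)$ nor the current $\frac{i}{\pi}\partial\overline{\partial}\varphi$ — I may assume $\psi + \varphi$ is genuinely plurisubharmonic on each such $U$. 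Since the $\psi$'s transform precisely as the weights of $h_{0}$ while $\varphi$ is a single global function, the collection $\{\psi + \varphi\}$ is a compatible system of psh local weights, so
\begin{equation*}
h := h_{0}\, e^{-\varphi}
\end{equation*}
is a singular hermitian metric on $L$ in the sense of Demailly. Its curvature current is computed locally as $\frac{i}{\pi}\partial\overline{\partial}(\psi+\varphi) = \theta|_{U} + \frac{i}{\pi}\partial\overline{\partial}\varphi|_{U} = T|_{U}$, and hence equals $T$ globally, which is what we want.

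The one point requiring care is the invocation of the $\partial\overline{\partial}$-lemma for currents rather than for smooth forms: one needs that every $d$-exact real $(1,1)$-current on a compact Kähler manifold is $\frac{i}{\pi}\partial\overline{\partial}$ of an $L^{1}_{\mathrm{loc}}$ potential. This is standard Hodge theory (regularity of the Green operator), and I would simply cite \cite{demailly12}; the remaining steps are routine local manipulations with plurisubharmonic weights.
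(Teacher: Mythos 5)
Your argument is correct: the paper itself offers no proof of this lemma (it is quoted from \cite{eckl04}), and your route via a smooth reference metric $h_{0}$, the $\partial\overline{\partial}$-lemma for currents on a compact K\"ahler manifold, and the global quasi-psh potential $\varphi$ with $h = h_{0}e^{-\varphi}$ is exactly the standard proof of this correspondence. You also correctly handle the only delicate point, namely that the potential produced by Hodge theory is a priori only a distribution and must be identified with its (quasi-psh, $L^{1}$) upper-semicontinuous representative using the positivity of $T$.
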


\subsection{Multiplier Ideals}

\begin{defn}
Let $X$ be a smooth variety and let $T$ be a positive $(1,1)$-current on $X$.  The multiplier ideal $\mathcal{J}(T)$ is the unique ideal satisfying the the local conditions at every point $x$
\begin{equation*}
\mathcal{J}(\phi)_{x} = \{ f \in \mathcal{O}_{X,x} \; | \; \exists U \ni x \textrm{ such that } \int_{U} |f|^{2}e^{-2\varphi} dV < \infty  \}
\end{equation*}
where $\varphi$ is a local weight function.
\end{defn}

The following well-known transformation rule controls how multiplier ideals behave under birational maps.

\begin{lem} \label{birationaltransformationformula}
Let $\phi: Y \to X$ be a birational map of smooth varieties and let $T$ be a positive current on $X$.  Then
\begin{equation*}
\mathcal{J}(T) = \phi_{*} \left( \mathcal{O}_{Y}(K_{Y/X}) \otimes \mathcal{J}(\phi^{*}T) \right).
\end{equation*}
\end{lem}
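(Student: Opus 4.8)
The plan is to reduce the statement to the defining local integrability condition for multiplier ideals and to the standard change-of-variables formula for integrals under a proper birational morphism. Since $T$ is a positive $(1,1)$-current, write it locally as $T = \frac{i}{\pi}\partial\overline{\partial}\varphi$ for a plurisubharmonic local weight $\varphi$; then $\phi^{*}T$ has local weight $\varphi \circ \phi$, which is plurisubharmonic on $Y$. The assertion $\mathcal{J}(T) = \phi_{*}(\mathcal{O}_{Y}(K_{Y/X}) \otimes \mathcal{J}(\phi^{*}T))$ is a statement about ideal sheaves, so it can be checked on stalks, and in fact it suffices to compare global sections over an affine open $U \subset X$ with preimage $V = \phi^{-1}(U) \subset Y$. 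Concretely, a rational function $f$ on $U$ lies in $\mathcal{J}(T)(U)$ iff $\int_{U}|f|^{2}e^{-2\varphi}\,dV_{X} < \infty$ near each point, and we must show this is equivalent to $\phi^{*}f \cdot (\text{generator of } \mathcal{O}_{Y}(K_{Y/X}))$ lying in $\mathcal{J}(\phi^{*}T)$, i.e. that $\int_{V}|\phi^{*}f|^{2}\,|J_{\phi}|^{2}\,e^{-2(\varphi\circ\phi)}\,dV_{Y} < \infty$, where $J_{\phi}$ is the Jacobian of $\phi$ (whose divisor of zeros is $K_{Y/X}$).

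First I would fix local coordinates on $X$ and on $Y$ and spell out the relation $\phi^{*}(dV_{X}) = |J_{\phi}|^{2}\,dV_{Y}$ between the reference volume forms, where $J_{\phi}$ is a local holomorphic function whose zero divisor is $K_{Y/X}$. Then, since $\phi: V \to U$ is proper and an isomorphism over a dense open set, the change-of-variables formula gives
\begin{equation*}
\int_{U}|f|^{2}e^{-2\varphi}\,dV_{X} = \int_{V}|\phi^{*}f|^{2}\,e^{-2(\varphi\circ\phi)}\,|J_{\phi}|^{2}\,dV_{Y}.
\end{equation*}
The right-hand integral being finite is exactly the condition that $\phi^{*}f$, viewed as a section of $\mathcal{O}_{Y}(K_{Y/X})$ twisted appropriately, lies in $\mathcal{J}(\phi^{*}T)$; unwinding this says $f \in \phi_{*}(\mathcal{O}_{Y}(K_{Y/X}) \otimes \mathcal{J}(\phi^{*}T))(U)$. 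Carefully matching the twist by $\mathcal{O}_{Y}(K_{Y/X})$ with the Jacobian factor $|J_{\phi}|^{2}$ is the bookkeeping core of the argument, and it is the only place where one must be attentive: the relative canonical divisor is precisely what records the discrepancy between the two volume forms.

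The main obstacle is not any deep input but rather the care needed in making the identification of $\mathcal{J}(\phi^{*}T) \otimes \mathcal{O}_{Y}(K_{Y/X})$ with the sheaf whose local sections satisfy the transformed integrability condition, and then in checking that $\phi_{*}$ of this sheaf really recovers $\mathcal{J}(T)$ on the nose (rather than, say, a larger or smaller sheaf). Here one uses that $\phi$ is proper, so $\phi_{*}$ of a coherent sheaf is coherent, and that $\phi$ is an isomorphism over the complement of a codimension-$\geq 2$ set on $X$ together with the fact that $\mathcal{J}(T)$ is integrally closed (hence determined in codimension one), so no information is lost in pushing forward. Since this lemma is explicitly labelled well-known and is standard in the literature (for instance in \cite{demailly12}), I would present the proof compactly, emphasizing the change-of-variables step and the role of $K_{Y/X}$, and citing the standard references for the detailed verification of the integrability equivalence.
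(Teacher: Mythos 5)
Your proof is correct: the change-of-variables computation matching the Jacobian factor $|J_{\phi}|^{2}$ against the twist by $\mathcal{O}_{Y}(K_{Y/X})$, together with properness of $\phi$ (so local integrability along the compact fibre $\phi^{-1}(x)$ globalizes) and the fact that $K_{Y/X}$ is $\phi$-exceptional so that $\phi_{*}\mathcal{O}_{Y}(K_{Y/X})=\mathcal{O}_{X}$ and the pushforward really is an ideal sheaf, is exactly the standard argument. The paper states this lemma as well known and gives no proof of its own, so there is nothing to compare against beyond confirming that yours is the expected one.
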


Following \cite{boucksom04}, for a prime divisor $D$ on $X$ we define
\begin{equation*}
\nu(T,D) := \min_{x \in D} \nu(T,x)
\end{equation*}
where $\nu(T,x)$ denotes the Lelong number of $T$ at $x$.  Using the Siu decomposition, we see this minimum is achieved by a very general point $x \in D$.

\begin{lem} \label{codimensiononecalculation}
Let $X$ be a smooth variety and let $T$ be a current on $X$.  Suppose that $\nu(T,D) > 0$ for some divisor $D$ on $X$.  Then $v_{D}(\mathcal{J}(T)) = \lfloor \nu(T,D) \rfloor$.
\end{lem}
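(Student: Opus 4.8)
The plan is to reduce the statement to a local integrability computation at a carefully chosen general point of $D$, where $T$ takes a simple form. Since $\mathcal{J}(T)$ is a coherent ideal sheaf, there is a dense Zariski–open $V \subseteq D$ and a neighbourhood $W$ of $V$ in $X$ on which $\mathcal{J}(T)|_{W} = \mathcal{O}_{W}(-kD)$, where $k := v_{D}(\mathcal{J}(T))$. So it suffices to exhibit one closed point $x \in V$ at which $\mathcal{J}(T)_{x} = (z_{1}^{k})$ for a local equation $z_{1}$ of $D$, and to compute $k$ there. Write $\nu := \nu(T,D)$. Using the Siu decomposition $T = \nu[D] + \sum_{j}\nu(T,D_{j})[D_{j}] + R$ with $R \geq 0$ and $\nu(R,\Gamma)=0$ for every prime divisor $\Gamma$, together with Siu's theorem that each sublevel set $\{\nu(R,\cdot)\geq c\}$ is analytic and hence (since $\nu(R,D)=0$) meets $D$ in a proper subvariety, I can choose $x \in V$ lying off every $D_{j}$ and with $\nu(R,x)=0$; the excluded locus is a countable union of proper closed subsets of the irreducible variety $D$, so such an $x$ exists. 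Near such an $x$, in local coordinates $(z_{1},\dots,z_{n})$ with $D = \{z_{1}=0\}$, a local weight of $T$ is $\varphi = \nu\log|z_{1}| + \psi$ with $\psi$ plurisubharmonic and $\nu(\psi,x)=0$.

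For the inequality $k \geq \lfloor\nu\rfloor$, I would argue as follows. Since $z_{1}^{k} \in \mathcal{J}(T)_{x}$, the function $|z_{1}|^{2k}e^{-2\varphi} = |z_{1}|^{2(k-\nu)}e^{-2\psi}$ is integrable on a small polydisc $U$ around $x$. As $\psi$ is plurisubharmonic, it is bounded above near $x$, so $e^{-2\psi}$ is bounded below by a positive constant there, and integrability forces $\int_{U}|z_{1}|^{2(k-\nu)}\,dV < \infty$. The elementary computation $\int_{|z_{1}|<r}|z_{1}|^{2(k-\nu)}\,dA = 2\pi\int_{0}^{r}\rho^{2(k-\nu)+1}\,d\rho$ shows this requires $k > \nu - 1$, and since $k$ is an integer we get $k \geq \lfloor\nu\rfloor$.

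For the reverse inequality it suffices to show $z_{1}^{\lfloor\nu\rfloor} \in \mathcal{J}(T)_{x}$, since then $z_{1}^{\lfloor\nu\rfloor}\in(z_{1}^{k})$ forces $k \leq \lfloor\nu\rfloor$. This amounts to the local integrability at $x$ of $|z_{1}|^{2\lfloor\nu\rfloor}e^{-2\varphi} = |z_{1}|^{-2\{\nu\}}e^{-2\psi} = e^{-2u}$, where $u := \psi + \{\nu\}\log|z_{1}|$ is plurisubharmonic. By additivity of Lelong numbers, $\nu(u,x) = \nu(\psi,x) + \{\nu\} = \{\nu\} < 1$, so Skoda's integrability theorem (see \cite{demailly12}) gives $e^{-2u} \in L^{1}_{\mathrm{loc}}$ near $x$. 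Combining the two inequalities yields $v_{D}(\mathcal{J}(T)) = k = \lfloor\nu\rfloor$.

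I expect the main obstacle to be the last step. Skoda's bound is sharp — it applies with the exponent $\{\nu\}<1$ rather than something larger — only because $x$ was chosen off the set where the residual current $R$ carries positive Lelong number, so that the entire Lelong number of $T$ at $x$ comes from the divisorial part $\nu[D]$. Producing such a point, and checking that the residual weight $\psi$ then contributes nothing to $\nu(u,x)$, is exactly what the Siu decomposition together with the analyticity of Lelong sublevel sets provides; the rest is bookkeeping with the $\log$–pole of a local equation of $D$.
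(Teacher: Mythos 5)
Your proof is correct and follows essentially the same route as the paper: take the Siu decomposition, pass to a very general point of $D$ avoiding the other divisorial components and the positive-Lelong locus of the residual current, and reduce to the one-variable integrability computation. You have simply made explicit the two steps the paper leaves implicit, namely the lower bound via boundedness of the residual weight and the upper bound via Skoda's integrability theorem applied to $u=\psi+\{\nu\}\log|z_{1}|$.
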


\begin{proof}
Denote the Siu decomposition of $T$ as
\begin{equation*}
T = R + \sum_{i=1}^{\infty} \nu(T,D_{i})[D_{i}]
\end{equation*}
where $[D_{i}]$ denotes the current of integration against the prime divisor $D_{i}$ and $E_{c}(R)$ has codimension at least $2$.  A very general point $x \in D$ avoids every other $D_{i}$ and $E_{c}(R)$.  Locally near $x$ membership in $\mathcal{J}(T)$ is determined by integrability conditions against a weight function for $\nu(T,D)[D]$.  Thus we are reduced to the familiar one-dimensional calculation.
\end{proof}

There are deeper results relating valuations to analytic multiplier ideals in \cite{fj04}, \cite{fj05}, and in \cite{bfj08}, but we do not need them.

\subsection{Currents of Minimal Singularities}

Among all the positive currents numerically equivalent to a divisor $L$, there are some with the ``best singularities'' in the following sense.

Suppose that $T_{1}$ and $T_{2}$ are two positive $(1,1)$-currents defined locally in suitable charts by weight functions $\phi_{1}$ and $\phi_{2}$.  We say that $T_{1}$ is less singular than $T_{2}$, and write $T_{1} \preceq T_{2}$, if there is some constant $C$ such that $\phi_{2} \leq \phi_{1} + C$.  Note that the ambiguity in the choice of weight function is accounted for by the presence of $C$ so that this definition only depends on the two currents.

Fix a divisor $L$ and consider the set $\mathcal{T}$ of positive currents $T$ with $c_{1}(T) = c_{1}(L)$.  Over all currents in $\mathcal{T}$, there is a minimal current $T_{min}$ under the relation $\preceq$; see for example \cite{dps01} Theorem 1.5.  We call $T_{min}$ a current of minimal singularities in the numerical class of $L$.  A metric of minimal singularities  has the smallest possible Lelong numbers, and the largest possible multiplier ideal, among all positive currents $T$ with the same numerical class.

The following proposition is an immediate consequence of \cite{boucksom04} Theorem 5.4.

\begin{prop}[\cite{boucksom04}, Theorem 5.4] \label{lelongbigequality}
Suppose that $L$ is a big divisor.  Then
\begin{equation*}
\nu(T_{min},D) = v_{D}(\Vert L \Vert)
\end{equation*}
for any prime divisor $D$ on $X$.
\end{prop}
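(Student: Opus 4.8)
The plan is to deduce the statement from \cite{boucksom04} Theorem 5.4, after checking that the left-hand side is a well-defined invariant and disposing of the elementary half of the equality by a direct comparison of currents. First note that $\nu(T_{min},D)$ does not depend on which current of minimal singularities we choose: if $T_{1}$ and $T_{2}$ are both of minimal singularities then $T_{1} \preceq T_{2} \preceq T_{1}$, so their local weights differ by a bounded function, hence $\nu(T_{1},x)=\nu(T_{2},x)$ for every $x \in X$ and therefore $\nu(T_{1},D)=\nu(T_{2},D)$ after taking the minimum over $x \in D$. So the quantity on the left is an honest numerical invariant of $c_{1}(L)$.

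For the inequality $\nu(T_{min},D) \leq v_{D}(\Vert L \Vert)$ I would argue directly. Since $L$ is big, $|mL| \neq \emptyset$ for $m$ sufficiently divisible; for such $m$ and any effective $D' \in |mL|$, the current of integration $\frac{1}{m}[D']$ is a positive $(1,1)$-current with $c_{1}(\frac{1}{m}[D']) = c_{1}(L)$, so $T_{min} \preceq \frac{1}{m}[D']$. Comparing Lelong numbers at a very general point $x \in D$, where $\nu([D'],x)$ equals the coefficient $\ord_{D}(D')$, gives $\nu(T_{min},D) \leq \frac{1}{m}\ord_{D}(D')$. Taking the minimum over $D' \in |mL|$ and then the infimum over $m$, and using $\inf_{m} \leq \liminf_{m}$, yields $\nu(T_{min},D) \leq v_{D}(\Vert L \Vert)$.

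The reverse inequality $\nu(T_{min},D) \geq v_{D}(\Vert L \Vert)$ is the substantive point, and here I would simply invoke \cite{boucksom04} Theorem 5.4. Boucksom defines the minimal multiplicity of the class $c_{1}(L)$ along $D$ as the limit, as $\epsilon \to 0$, of the generic Lelong numbers of positive currents in $c_{1}(L) + \epsilon A$ for a fixed ample $A$; he shows that for a big class this limit is already attained by $T_{min}$ itself, and then identifies it with the algebraic invariant $v_{D}(\Vert L \Vert)$. This last identification is exactly the hard part: it amounts to producing, from a current of minimal singularities on the big class $c_{1}(L)$, sections of $|mL|$ vanishing along $D$ to order close to $m\cdot\nu(T_{min},D)$, which uses essentially that $L$ is big (so that $c_{1}(L)$ carries a K\"ahler current and Demailly's regularization applies) and that $X$ is projective (to convert the regularized currents with analytic singularities back into honest sections via Ohsawa--Takegoshi-type estimates). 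I expect no further obstacle in our setting; it is, however, worth recording that this is precisely the step that fails on the pseudo-effective boundary and hence the origin of the strict containments in Conjecture \ref{psefequality}.
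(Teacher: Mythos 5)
Your proposal is correct and matches the paper's treatment: the paper states this proposition as an immediate consequence of \cite{boucksom04} Theorem 5.4 and gives no further argument, which is exactly where you place the substantive content. The extra material you supply (independence of the choice of $T_{min}$, and the elementary inequality $\nu(T_{min},D) \leq v_{D}(\Vert L \Vert)$ via comparison with the currents $\frac{1}{m}[D']$) is accurate but is already subsumed in the cited theorem.
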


\begin{cor} \label{lelongcor}
Let $X$ be smooth, $L$ a pseudo-effective divisor.  Then for any birational map $\phi: Y \to X$ and any prime divisor $D$ on $Y$ we have
\begin{equation*}
\nu(\phi^{*}T_{min},D) \geq \sigma_{D}(L).
\end{equation*}
\end{cor}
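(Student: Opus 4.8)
The plan is to reduce to the big case, where Proposition \ref{lelongbigequality} applies, by perturbing $L$ by a small ample divisor and then letting the perturbation tend to zero. Fix an ample divisor $A$ on $X$ together with a smooth semipositive $(1,1)$-form $\omega_{A}$ representing $c_{1}(A)$. For every $\epsilon > 0$ the divisor $L + \epsilon A$ is big, hence so is its pullback $\phi^{*}(L+\epsilon A) = \phi^{*}L + \epsilon \phi^{*}A$ on $Y$; let $T_{\epsilon}$ be a current of minimal singularities in the numerical class of $\phi^{*}(L+\epsilon A)$ on $Y$. Applying Proposition \ref{lelongbigequality} on $Y$ to this big divisor and the prime divisor $D$ gives $\nu(T_{\epsilon}, D) = v_{D}(\Vert \phi^{*}(L+\epsilon A) \Vert)$, and by the birational invariance of asymptotic divisorial valuations the right-hand side equals $v_{D}(\Vert L + \epsilon A \Vert)$, where $v_{D} = \ord_{D}$ is regarded as an element of $\Val^{*}(X) = \Val^{*}(Y)$.

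Next I would compare $T_{\epsilon}$ with $\phi^{*}T_{min}$. Since $T_{min} \geq 0$ and $\omega_{A} \geq 0$, the current $T_{min} + \epsilon\omega_{A}$ is positive and lies in the class $c_{1}(L + \epsilon A)$; pulling it back by the birational morphism $\phi$ (which is dominant, so that pullbacks of plurisubharmonic potentials are well defined) produces the positive current $\phi^{*}T_{min} + \epsilon\,\phi^{*}\omega_{A}$ on $Y$, lying in the class $c_{1}(\phi^{*}(L+\epsilon A))$. By the defining property of a current of minimal singularities we have $T_{\epsilon} \preceq \phi^{*}T_{min} + \epsilon\,\phi^{*}\omega_{A}$, so at every point $x \in Y$ one has $\nu(T_{\epsilon}, x) \leq \nu(\phi^{*}T_{min} + \epsilon\,\phi^{*}\omega_{A}, x)$; since $\epsilon\,\phi^{*}\omega_{A}$ is smooth it carries no Lelong number, and this last quantity equals $\nu(\phi^{*}T_{min}, x)$. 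Taking the minimum over $x \in D$ yields $\nu(T_{\epsilon}, D) \leq \nu(\phi^{*}T_{min}, D)$.

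Combining the inequalities of the last two paragraphs gives $\nu(\phi^{*}T_{min}, D) \geq v_{D}(\Vert L + \epsilon A \Vert)$ for every $\epsilon > 0$. By Lemma \ref{observation} the function $\epsilon \mapsto v_{D}(\Vert L + \epsilon A \Vert)$ is non-increasing, so its supremum over $\epsilon > 0$ equals $\lim_{\epsilon \to 0} v_{D}(\Vert L + \epsilon A \Vert) = \sigma_{D}(L)$; hence $\nu(\phi^{*}T_{min}, D) \geq \sigma_{D}(L)$, which is the desired statement. The step requiring the most care is the second paragraph: one must verify that $\phi^{*}T_{min} + \epsilon\,\phi^{*}\omega_{A}$ genuinely is a positive current in the correct cohomology class on $Y$, so that the comparison with $T_{\epsilon}$ is legitimate there, and that adding the smooth form $\epsilon\,\phi^{*}\omega_{A}$ leaves every Lelong number unchanged. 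Both are standard facts about positive currents; the birational invariance of $v_{D}(\Vert - \Vert)$ and the monotone passage to the limit are routine.
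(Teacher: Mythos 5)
Your proof is correct and follows essentially the same route as the paper's: perturb by a small ample class, compare $\phi^{*}T_{min}$ plus a smooth form against the current of minimal singularities in the big class via $\preceq$, apply Proposition \ref{lelongbigequality}, and pass to the limit $\epsilon \to 0$. The only difference is organizational: the paper first reduces to $Y = X$ via $\nu(\phi^{*}T_{min,L},D) \geq \nu(T_{min,\phi^{*}L},D)$ and then invokes upper semicontinuity of Lelong numbers, whereas you work on $Y$ throughout and simply observe that the smooth form $\epsilon\,\phi^{*}\omega_{A}$ contributes nothing to the Lelong number.
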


\begin{proof}
Note that $\nu(\phi^{*}T_{min,L},D) \geq \nu(T_{min,\phi^{*}L},D)$.  Thus it suffices to consider the case when $Y = X$.

Let $\omega$ be a K\"ahler form on $X$ in the numerical class of an ample line bundle $A$.  Then $T_{min,L} + \epsilon \omega \succeq T_{min,L + \epsilon A}$.  By upper semicontinuity of Lelong numbers (see \cite{demailly93}, Proposition 3.12):
\begin{equation*}
\nu(\lim_{\epsilon \to 0} T_{min,L} + \epsilon \omega,D) \geq \lim_{\epsilon \to 0} \nu(T_{min,L} + \epsilon \omega,D) \geq \lim_{\epsilon \to 0} \nu(T_{min,L + \epsilon A},D).
\end{equation*}
By Proposition \ref{lelongbigequality} $\nu(T_{min,L},D) \geq \sigma_{D}(L)$.
\end{proof}

\begin{proof}[Proof of Theorem \ref{idealcomparison}:]
Lemma \ref{codimensiononecalculation} and Corollary \ref{lelongcor} show that for every birational map $\phi: Y \to X$ and every prime divisor $D$ on $Y$ we have
\begin{equation*}
v_{D}(\mathcal{J}(T_{min})) \geq \lfloor \sigma_{D}(L) \rfloor.
\end{equation*}
Since $A(v_{D})$ is at least $1$, we obtain
\begin{equation*}
v_{D}(\mathcal{J}(T_{min})) >  \sigma_{D}(L)  - A(v_{D}),
\end{equation*}
and thus similarly for any divisorial valuation.  By Proposition \ref{perturbedvaluation} we have that $\mathcal{J}(T_{min}) \subset \mathcal{J}_{-}(L)$.
\end{proof}

For divisors on the pseudo-effective boundary, the containments
\begin{equation*}
\mathcal{J}(\Vert L \Vert) \subset \mathcal{J}(h_{min}) \subset \mathcal{J}_{-}(L)
\end{equation*}
may be strict (and similarly for the conjectural containment $\mathcal{J}(h_{min}) \subset \mathcal{J}_{\sigma}(L)$).

\begin{exmple}[\cite{del00}, pgs. 8-9]
It may happen that $\mathcal{J}(\Vert L \Vert)$ is properly contained in $\mathcal{J}(h_{min})$.
For example, let $C$ be an elliptic curve and let $\tau$ be a numerically trivial non-torsion line bundle.  Consider the projective bundle $X = \mathbb{P}_{C}(\mathcal{O} \oplus \tau)$ and let $L$ be the zero section.  For every $m \geq 1$, $\mathcal{O}_{Y}(mL)$ has the unique section $mL$ so $\mathcal{J}(\Vert L \Vert) = \mathcal{O}_{X}(-L)$.  However, since $\mathcal{O} \oplus \tau$ is a unitary flat bundle, $\mathcal{O}_{X}(L)$ has a smooth semipositive metric and thus $\mathcal{J}(h_{min}) = \mathcal{O}_{X}$.
\end{exmple}

\begin{exmple}[\cite{dps01}, Example 2.5] \label{dps01exmple}
It is also possible that $\mathcal{J}(h_{min})$ is properly contained in $\mathcal{J}_{-}(L)$ (and the diminished ideal $\mathcal{J}_{\sigma}(L)$).  For example,
let $C$ be an elliptic curve and let $X \to C$ be the projective
bundle corresponding to the unique nonsplit extension
\begin{equation*}
0 \to \mathcal{O}_{C} \to E \to \mathcal{O}_{C} \to 0
\end{equation*}
Let $L$ be the zero section.  Combining the multiplier ideal version of Hard Lefschetz with an explicit cohomology calculation, \cite{dps01} proves that $L$ can not carry a metric with $\mathcal{J}(h_{min}) = \mathcal{O}_{X}$.  However, since $L$ is nef we have $\mathcal{J}_{-}(L) = \mathcal{O}_{X}$ (and also $\mathcal{J}_{\sigma}(L) = \mathcal{O}_{X}$).
\end{exmple}

\section{Diminished Ideals} \label{diminishedsection}

One disadvantage of $\mathcal{J}_{-}(L)$ is that the perturbation by an ample divisor increases the ``positivity'' of our divisor.  As shown by Example \ref{perturbedexample}, this increase is unavoidable even when $L$ is big.  In this section we correct this deficiency by working with the upper-regularization of $\mathcal{J}_{-}(L)$.  We call this ideal the diminished ideal and denote it by $\mathcal{J}_{\sigma}(L)$.

\begin{lem}
Let $L$ be a pseudo-effective divisor.  The ideals $\mathcal{J}_{-}((1 + \epsilon)L)$ form an ascending chain as $\epsilon > 0$ decreases.
\end{lem}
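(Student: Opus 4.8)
The plan is to reduce the claim to the rescaling behaviour of asymptotic multiplier ideals on the big cone recorded in Corollary \ref{nestedideals}. Write the asserted monotonicity explicitly: for $\epsilon > \epsilon' > 0$ we must show $\mathcal{J}_{-}((1+\epsilon)L) \subset \mathcal{J}_{-}((1+\epsilon')L)$. Fix an ample divisor $A$ and set $\lambda = \frac{1+\epsilon}{1+\epsilon'} > 1$; the key algebraic identity is that for any $u > 0$,
\begin{equation*}
(1+\epsilon)L + uA = \lambda\left((1+\epsilon')L + \tfrac{u}{\lambda}A\right),
\end{equation*}
where the divisor in parentheses is big, being pseudo-effective plus an ample divisor.

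First I would establish the auxiliary fact that for a pseudo-effective divisor $M$ one has $\mathcal{J}(\Vert M + uA\Vert) = \mathcal{J}_{-}(M)$ for all sufficiently small $u > 0$, not merely for $u$ of the form $1/t$. This follows from Definition \ref{perturbedidealdefn} together with Corollary \ref{plusamplecontainment}: if $t_{0}$ is an index at which the chain $\{\mathcal{J}(\Vert M + \frac{1}{t}A\Vert)\}$ has stabilized, then for $0 < u \le \frac{1}{t_{0}}$ one sandwiches $M + uA$ between $M + \frac{1}{t_{1}}A$, with $t_{1}\ge t_{0}$ chosen so that $\frac{1}{t_{1}} \le u$, and $M + \frac{1}{t_{0}}A$; each of these differs from $M+uA$ by an ample divisor, so Corollary \ref{plusamplecontainment} applied in both directions pins the ideal to $\mathcal{J}_{-}(M)$.

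With this in hand, choose $s > 0$ small enough that both $\mathcal{J}(\Vert (1+\epsilon)L + sA\Vert) = \mathcal{J}_{-}((1+\epsilon)L)$ and $\mathcal{J}(\Vert (1+\epsilon')L + \frac{s}{\lambda} A\Vert) = \mathcal{J}_{-}((1+\epsilon')L)$; since $\frac{s}{\lambda} < s$, any $s$ below both stabilization thresholds suffices. Applying Corollary \ref{nestedideals}, in its form for big $\mathbb{R}$-divisors, to the big divisor $N := (1+\epsilon')L + \frac{s}{\lambda} A$ with scaling factor $\lambda > 1$ yields
\begin{equation*}
\mathcal{J}_{-}((1+\epsilon)L) = \mathcal{J}(\Vert \lambda N\Vert) \subset \mathcal{J}(\Vert N\Vert) = \mathcal{J}_{-}((1+\epsilon')L),
\end{equation*}
which is precisely the asserted containment, i.e.\ the ideals increase as $\epsilon$ decreases.

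I do not expect a serious obstacle; the one point needing care is the auxiliary fact that $\mathcal{J}_{-}(M)$ is computed by $\mathcal{J}(\Vert M + uA\Vert)$ on an entire interval of small $u$ rather than a discrete sequence, so that the rescaled perturbation $\frac{s}{\lambda}A$ still lands in the stable range. Alternatively, when $L$ and the $\epsilon$'s are rational one can bypass this step using Proposition \ref{perturbedvaluation}: the substitution $(1+\epsilon)L + \delta A = (1+\epsilon)(L + \frac{\delta}{1+\epsilon}A)$ together with continuity of $v(\Vert - \Vert)$ on the big cone gives $\sigma_{v}((1+\epsilon)L) = (1+\epsilon)\sigma_{v}(L)$, which is non-decreasing in $\epsilon$ since $\sigma_{v}(L) \ge 0$, and the valuative criterion of Proposition \ref{perturbedvaluation} then delivers the containment immediately.
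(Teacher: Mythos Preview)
Your proof is correct and follows essentially the same route as the paper's: both arguments choose a small enough ample perturbation so that $\mathcal{J}_{-}((1+\epsilon)L)$ and $\mathcal{J}_{-}((1+\epsilon')L)$ are realized as asymptotic multiplier ideals of big divisors related by a rescaling, and then invoke Corollary~\ref{nestedideals}. The only difference is cosmetic---the paper writes the perturbations as $(1+\epsilon_i)tA$ for a common $t$, while you factor via $\lambda$---and you are more explicit than the paper about why the stabilization $\mathcal{J}_{-}(M)=\mathcal{J}(\Vert M+uA\Vert)$ holds on an interval of $u$ rather than just the discrete sequence $1/t$.
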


\begin{proof}
Fix an ample divisor $A$ and $0<\epsilon_{0}<\epsilon_{1}$.  One simply notes that for some sufficiently small $t$ one has
\begin{equation*}
\mathcal{J}_{-}((1+\epsilon_{0})L) = \mathcal{J}(\Vert (1+\epsilon_{0})L + (1+\epsilon_{0})tA \Vert)
\end{equation*}
and
\begin{equation*}
\mathcal{J}_{-}((1+\epsilon_{1})L) = \mathcal{J}(\Vert (1+\epsilon_{1})L + (1+\epsilon_{1})tA \Vert).
\end{equation*}
Lemma \ref{nestedideals} gives the desired conclusion.
\end{proof}

\begin{defn} \label{sigmadefn}
Let $L$ be a pseudo-effective divisor.  The diminished ideal $\mathcal{J}_{\sigma}(L)$ is defined to be the maximal element of the set $\{ \mathcal{J}_{-}((1+\epsilon)L) \}_{\epsilon > 0}$.
\end{defn}

Note that $\mathcal{J}_{\sigma}(L)$ only depends on the numerical class of $L$.  Furthermore, it is clear from the construction that $\mathcal{J}_{\sigma}(L) = \mathcal{J}_{\sigma}((1+\epsilon)L)$ for sufficiently small positive $\epsilon$.

We will soon prove that when $L$ is big $\mathcal{J}_{\sigma}(L) = \mathcal{J}(\Vert L \Vert)$.  Thus the diminished ideal can be thought of as a ``continuous'' extension of the asymptotic multiplier ideal from the big cone to the pseudo-effective boundary.  It turns out that the diminished ideal shares many desirable properties with asymptotic multiplier ideals of big divisors; see \cite{lazarsfeld04} Remark 11.1.10.

\begin{exmple}
Suppose that $X$ has dimension $2$.  Then for any big divisor $L$ we have $\mathcal{J}(\Vert L \Vert) = \mathcal{J}(N_{\sigma}(L))$ where $N_{\sigma}(L)$ is the negative part of the Zariski decomposition.  The diminished ideal then is the natural extension to the pseudo-effective boundary: $\mathcal{J}_{\sigma}(L) = \mathcal{J}(N_{\sigma}(L))$ for every pseudo-effective $L$.  This is a special case of Proposition \ref{zardecom} (2).
\end{exmple}

\begin{lem} 
Let $L$ be a pseudo-effective $\mathbb{R}$-divisor.  We have an equality of sets
\begin{equation*}
\bigcup_{m \in \mathbb{Z}_{> 0}} V(\mathcal{J}_{\sigma}(mL)) = \mathbf{B}_{-}(L).
\end{equation*}
\end{lem}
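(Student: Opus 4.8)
The plan is to deduce this set-theoretic equality from the analogous statement for the perturbed ideal, namely Lemma~\ref{perturbedgivesdiminishedlocus}, which asserts $\bigcup_{m} V(\mathcal{J}_{-}(mL)) = \mathbf{B}_{-}(L)$. Since we already know $\mathcal{J}_{\sigma}(mL) \subset \mathcal{J}_{-}(mL)$ for every $m$, we immediately get $V(\mathcal{J}_{-}(mL)) \subset V(\mathcal{J}_{\sigma}(mL))$, hence one containment $\mathbf{B}_{-}(L) \subset \bigcup_{m} V(\mathcal{J}_{\sigma}(mL))$ is free.

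For the reverse containment, the key observation is that $\mathcal{J}_{\sigma}(mL) = \mathcal{J}_{-}((1+\epsilon)mL)$ for sufficiently small $\epsilon > 0$ (by Definition~\ref{sigmadefn} and the remark following it). So a point in $V(\mathcal{J}_{\sigma}(mL))$ lies in $V(\mathcal{J}_{-}((1+\epsilon)mL))$ for an appropriate $\epsilon$. The issue is that $(1+\epsilon)m$ need not be an integer, so this is not literally one of the sets $V(\mathcal{J}_{-}(m'L))$ appearing in Lemma~\ref{perturbedgivesdiminishedlocus}. However, $\mathcal{J}_{-}$ depends only on the numerical class and scales appropriately: writing $(1+\epsilon)mL = m'L + (\text{small ample perturbation built into } \mathcal{J}_{-})$ is not quite right either, so instead I would argue directly. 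Choose a rational $t$ with $1 < t < 1+\epsilon$; then by Lemma~\ref{nestedideals} (applied through the definition of $\mathcal{J}_{-}$, as in the proof that the $\mathcal{J}_{-}((1+\epsilon)L)$ form an ascending chain) we have $\mathcal{J}_{-}((1+\epsilon)mL) \subset \mathcal{J}_{-}(tmL)$ is false in the wrong direction — rather the chain is ascending as $\epsilon$ decreases, so $\mathcal{J}_{\sigma}(mL) = \mathcal{J}_{-}((1+\epsilon)mL) \subset \mathcal{J}_{-}(tmL)$ when $t \leq 1+\epsilon$. Thus $V(\mathcal{J}_{-}(tmL)) \subset V(\mathcal{J}_{\sigma}(mL))$, which is the wrong direction again. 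Let me instead take $t$ rational with $1 < t \le 1+\epsilon$ so that $\mathcal{J}_{-}(tmL) \supset \mathcal{J}_{-}((1+\epsilon)mL) = \mathcal{J}_{\sigma}(mL)$, hence $V(\mathcal{J}_{\sigma}(mL)) \supset V(\mathcal{J}_{-}(tmL))$ — still wrong. The correct move: pick rational $t$ with $1 < t \le 1+\epsilon$; then $\mathcal{J}_{\sigma}(mL) = \mathcal{J}_{-}((1+\epsilon)mL)$ and since $t m L$ has $t \le 1+\epsilon$ the ascending-chain lemma gives $\mathcal{J}_{-}(tmL) \subset \mathcal{J}_{-}((1+\epsilon)mL) = \mathcal{J}_{\sigma}(mL)$, hence $\mathbf{B}_{-}(L) \supset V(\mathcal{J}_{-}(tmL)) \supset V(\mathcal{J}_{\sigma}(mL))$ once we know $V(\mathcal{J}_{-}(tmL)) \subset \mathbf{B}_{-}(L)$. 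Writing $tm = p/q$ and using that $\mathcal{J}_{-}$ depends only on the numerical class so $\mathcal{J}_{-}(\tfrac{p}{q}L) = \mathcal{J}_{-}(\tfrac{1}{q}(pL))$, and $\mathbf{B}_{-}(\tfrac1q(pL)) = \mathbf{B}_{-}(pL) \subset \mathbf{B}_{-}(L)$, Lemma~\ref{perturbedgivesdiminishedlocus} applied to $pL$ finishes it.

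The main obstacle, then, is purely bookkeeping: reconciling the integer-indexed union in Lemma~\ref{perturbedgivesdiminishedlocus} with the real-parameter definition of $\mathcal{J}_{\sigma}$, and getting the direction of every containment right. Concretely I would: (i) record $\mathcal{J}_{\sigma}(mL) \subset \mathcal{J}_{-}(mL)$ to get $\mathbf{B}_{-}(L) \subset \bigcup_m V(\mathcal{J}_{\sigma}(mL))$; (ii) for the reverse, fix $m$, choose $\epsilon>0$ rational with $\mathcal{J}_{\sigma}(mL) = \mathcal{J}_{-}((1+\epsilon)mL)$, pick a rational $t \in (1, 1+\epsilon]$, and use numerical invariance of $\mathcal{J}_{-}$ to rewrite $V(\mathcal{J}_{\sigma}(mL)) = V(\mathcal{J}_{-}((1+\epsilon)mL)) \subset V(\mathcal{J}_{-}(tmL))$; (iii) clear denominators, $tm = p/q$, so $V(\mathcal{J}_{-}(tmL)) = V(\mathcal{J}_{-}(\tfrac1q \cdot pL)) \subset \bigcup_{m'} V(\mathcal{J}_{-}(m'(pL))) = \mathbf{B}_{-}(pL) \subset \mathbf{B}_{-}(L)$ by Lemma~\ref{perturbedgivesdiminishedlocus} and monotonicity of $\mathbf{B}_{-}$ under positive rescaling. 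Taking the union over $m$ yields the claimed equality. No deep input beyond Lemma~\ref{perturbedgivesdiminishedlocus}, Definition~\ref{sigmadefn}, and the numerical-class invariance of $\mathcal{J}_{-}$ is needed.
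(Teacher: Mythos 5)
Your proposal is the same argument as the paper's: sandwich $\mathcal{J}_{\sigma}$ between perturbed ideals and quote Lemma~\ref{perturbedgivesdiminishedlocus}. But the paper dispatches all of your bookkeeping with one observation you missed: taking $\epsilon = \tfrac{1}{m}$ in the chain $\{\mathcal{J}_{-}((1+\epsilon)\,mL)\}_{\epsilon>0}$ gives $(1+\epsilon)m = m+1$, an integer, so maximality of $\mathcal{J}_{\sigma}(mL)$ in that chain yields $\mathcal{J}_{\sigma}((m+1)L) \subset \mathcal{J}_{-}((m+1)L) \subset \mathcal{J}_{\sigma}(mL)$ and the two unions are immediately squeezed between each other. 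No rational $t$, no clearing of denominators.

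Two points in your writeup need attention. First, you repeatedly assert $\mathcal{J}_{-}(tmL) \subset \mathcal{J}_{-}((1+\epsilon)mL)$ for $t \le 1+\epsilon$ ``by the ascending-chain lemma''; the chain ascends as $\epsilon$ \emph{decreases}, so that lemma gives the opposite inclusion. What saves you is that $\mathcal{J}_{\sigma}(mL)$ is the \emph{maximal} element of the chain, so every $\mathcal{J}_{-}((1+\epsilon_{0})mL)$ with $\epsilon_{0}>0$ is contained in it regardless; cite that, not the chain's monotonicity. Second, in step (iii) the containment $V(\mathcal{J}_{-}(\tfrac1q\, pL)) \subset \bigcup_{m'} V(\mathcal{J}_{-}(m'\,pL))$ is not justified: $\tfrac1q$ is not one of the integer indices $m'$, so you are implicitly using $\mathcal{J}_{-}(pL) \subset \mathcal{J}_{-}(\tfrac1q\, pL)$, which is true but requires an argument (scaling down a divisor enlarges the perturbed ideal). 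The clean fix is to note that Lemma~\ref{perturbedgivesdiminishedlocus} is stated for arbitrary pseudo-effective $\mathbb{R}$-divisors, so you may apply it directly to $D = tmL$: the $m'=1$ term gives $V(\mathcal{J}_{-}(tmL)) \subset \mathbf{B}_{-}(tmL) = \mathbf{B}_{-}(L)$, and no denominators need clearing at all.
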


\begin{proof}
Lemma \ref{perturbedgivesdiminishedlocus} shows that
\begin{equation*}
 \bigcup_{m \in \mathbb{Z}_{> 0}} V(\mathcal{J}_{-}(mL)) = \mathbf{B}_{-}(L).
\end{equation*}
The containments $\mathcal{J}_{\sigma}((m+1)L) \subset \mathcal{J}_{-}((m+1)L) \subset \mathcal{J}_{\sigma}(mL)$ yield the conclusion.
\end{proof}

An immediate consequence of the birational transformation rule for $\mathcal{J}_{-}(L)$ (Corollary \ref{perturbedbirational}) is:

\begin{lem}
Let $L$ be a pseudo-effective divisor.  Suppose that $\phi: Y \to X$ is a birational map from a smooth variety $Y$.  Then
\begin{equation*}
\mathcal{J}_{\sigma}(L) = \phi_{*}(\mathcal{O}_{Y}(K_{Y/X}) \otimes \mathcal{J}_{\sigma}(\phi^{*}L)).
\end{equation*}
\end{lem}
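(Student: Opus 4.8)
The plan is to reduce the statement to the birational transformation rule for the perturbed ideal, Corollary \ref{perturbedbirational}, by analyzing how the formula interacts with the direct limit defining $\mathcal{J}_{\sigma}$. Recall that $\mathcal{J}_{\sigma}(L)$ is by definition the maximal (equivalently, eventual) element of the ascending chain $\{\mathcal{J}_{-}((1+\epsilon)L)\}_{\epsilon>0}$, and likewise $\mathcal{J}_{\sigma}(\phi^{*}L)$ is the eventual element of $\{\mathcal{J}_{-}((1+\epsilon)\phi^{*}L)\}_{\epsilon>0}$. The first step is to apply Corollary \ref{perturbedbirational} to the divisor $(1+\epsilon)L$ for each fixed $\epsilon>0$, giving
\begin{equation*}
\mathcal{J}_{-}((1+\epsilon)L) = \phi_{*}\bigl(\mathcal{O}_{Y}(K_{Y/X}) \otimes \mathcal{J}_{-}((1+\epsilon)\phi^{*}L)\bigr),
\end{equation*}
using that $\phi^{*}((1+\epsilon)L) = (1+\epsilon)\phi^{*}L$.

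Next I would take the limit as $\epsilon \to 0^{+}$ on both sides. On the right side, since both chains stabilize for $\epsilon$ sufficiently small, for any small enough $\epsilon$ the inner ideal $\mathcal{J}_{-}((1+\epsilon)\phi^{*}L)$ already equals $\mathcal{J}_{\sigma}(\phi^{*}L)$, so the right-hand side equals $\phi_{*}(\mathcal{O}_{Y}(K_{Y/X}) \otimes \mathcal{J}_{\sigma}(\phi^{*}L))$ for all sufficiently small $\epsilon$. On the left side, for sufficiently small $\epsilon$ we have $\mathcal{J}_{-}((1+\epsilon)L) = \mathcal{J}_{\sigma}(L)$ by Definition \ref{sigmadefn}. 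Choosing $\epsilon$ small enough to satisfy both stabilizations simultaneously (possible since each only requires $\epsilon$ below some threshold), the displayed equality becomes exactly $\mathcal{J}_{\sigma}(L) = \phi_{*}(\mathcal{O}_{Y}(K_{Y/X}) \otimes \mathcal{J}_{\sigma}(\phi^{*}L))$, as desired.

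The only subtle point — and the place I would be most careful — is the compatibility of the two stabilization thresholds, i.e.\ checking that there is a single $\epsilon$ small enough that simultaneously $\mathcal{J}_{-}((1+\epsilon)L) = \mathcal{J}_{\sigma}(L)$ and $\mathcal{J}_{-}((1+\epsilon)\phi^{*}L) = \mathcal{J}_{\sigma}(\phi^{*}L)$. This is immediate since ``stabilizes'' here means ``equal for all sufficiently small $\epsilon$,'' so one simply takes the minimum of the two thresholds. A cleaner alternative avoiding any mention of limits: note that $\mathcal{J}_{\sigma}(L) = \mathcal{J}_{\sigma}((1+\epsilon_{0})L) = \mathcal{J}_{-}((1+\epsilon_{0})(1+\epsilon_{1})L)$ for suitable small $\epsilon_{0},\epsilon_{1}$, then pull back via Corollary \ref{perturbedbirational} in one stroke and recognize the right side as $\phi_{*}(\mathcal{O}_{Y}(K_{Y/X}) \otimes \mathcal{J}_{\sigma}(\phi^{*}L))$. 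I expect no genuine obstacle here; the lemma is a formal consequence of Corollary \ref{perturbedbirational} together with the observation, already recorded after Definition \ref{sigmadefn}, that $\mathcal{J}_{\sigma}$ is insensitive to rescaling $L$ by a factor close to $1$.
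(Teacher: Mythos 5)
Your proof is correct and matches the paper's approach: the paper simply declares this lemma an immediate consequence of Corollary \ref{perturbedbirational}, and your argument—applying that corollary to $(1+\epsilon)L$ and choosing $\epsilon$ below both stabilization thresholds—is exactly the intended fleshing-out of that claim.
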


When $L$ has a Zariski decomposition the diminished ideal is particularly easy to understand.

\begin{prop} \label{zardecom}
Let $L$ be a pseudo-effective divisor.
\begin{enumerate}
\item For every prime divisor $\Gamma$ on $X$
\begin{equation*}
\ord_{\Gamma}(\mathcal{J}_{\sigma}(L)) = \lfloor \sigma_{\Gamma}(L) \rfloor.
\end{equation*}
\item Suppose that $L$ has a Zariski decomposition, i.e.~there is a birational map $\phi: Y \to X$ from a smooth variety $Y$ such that $P_{\sigma}(\phi^{*}L)$ is nef.  Then
\begin{equation*}
\mathcal{J}_{\sigma}(L) = \phi_{*}(\mathcal{O}_{Y}(K_{Y/X}) \otimes \mathcal{J}(N_{\sigma}(\phi^{*}L))).
\end{equation*}
\end{enumerate}
\end{prop}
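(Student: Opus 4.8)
My plan is to prove (1) using the valuative description of $\mathcal{J}_{\sigma}(L)$ that follows from Proposition \ref{perturbedvaluation}, and then to derive (2) by combining (1) with the birational transformation rule and the nefness hypothesis. For part (1), the key is to characterize when a function lies in $\mathcal{J}_{\sigma}(L)$ in valuative terms. Since $\mathcal{J}_{\sigma}(L) = \mathcal{J}_{-}((1+\epsilon)L)$ for all sufficiently small $\epsilon > 0$, Proposition \ref{perturbedvaluation} tells us that $\mathfrak{q} \subset \mathcal{J}_{\sigma}(L)$ iff $v(\mathfrak{q}) \geq \sigma_{v}((1+\epsilon)L) - A(v)$ for every divisorial valuation $v$, and hence — using that $\sigma_{v}$ is homogeneous on rays and $\sigma_{v}((1+\epsilon)L) \to \sigma_{v}(L)$ — iff $v(\mathfrak{q}) > \sigma_{v}(L) - A(v)$ for every $v \in \Val^{*}(X)$ (the strict inequality appearing because we get to absorb the $\epsilon$-gap). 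I would need to be slightly careful here: the right valuative statement is that $\mathfrak{q} \subset \mathcal{J}_{\sigma}(L)$ iff $v(\mathfrak{q}) > \sigma_v(L) - A(v)$ for all divisorial $v$, and I should verify that taking the supremum over $\epsilon$ converts the non-strict inequalities of Proposition \ref{perturbedvaluation} into a single strict one, using that $\sigma_v((1+\epsilon)L) = (1+\epsilon)\sigma_v(L)$ on $\mathbb{Q}$-divisors and a limiting/density argument for $\mathbb{R}$-divisors as in Lemma \ref{divdensity}.

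Granting that valuative criterion, part (1) is immediate: apply it with $\mathfrak{q} = \mathcal{O}_X(-k\Gamma)$ for the prime divisor $\Gamma$. We have $\ord_\Gamma(\mathcal{O}_X(-k\Gamma)) = k$ and $A(\ord_\Gamma) = 1$, while $\ord_{\Gamma'}(\mathcal{O}_X(-k\Gamma)) = 0$ for $\Gamma' \neq \Gamma$, so the only binding constraint among divisorial valuations centered in codimension one is the one at $\Gamma$, giving $k > \sigma_\Gamma(L) - 1$. For higher-center divisorial valuations $v$ one checks the inequality $v(\mathcal{O}_X(-k\Gamma)) > \sigma_v(L) - A(v)$ is automatically satisfied once $k = \lfloor \sigma_\Gamma(L) \rfloor$, because $\mathcal{J}(N_\sigma(L))$-type reasoning (or simply: the ideal $\mathcal{J}_\sigma(L)$ is already known to be contained in $\mathcal{J}_-(L)$, whose order along $\Gamma$ is $\lfloor \sigma_\Gamma(L)\rfloor$ by Lemma \ref{codimensiononecalculation}-style codimension-one computation) pins down $\ord_\Gamma$. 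So the cleanest route is: $\ord_\Gamma(\mathcal{J}_\sigma(L)) \leq \ord_\Gamma(\mathcal{J}_-(L)) = \lfloor \sigma_\Gamma(L)\rfloor$ for the upper bound, and the valuative criterion gives the lower bound $\ord_\Gamma(\mathcal{J}_\sigma(L)) \geq \lfloor \sigma_\Gamma(L) \rfloor$.

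For part (2), by the birational transformation rule (the Lemma immediately preceding the Proposition) we may pull back to $Y$ and assume $P := P_\sigma(\phi^*L)$ is nef on $X$ itself, so it suffices to show $\mathcal{J}_\sigma(L) = \mathcal{J}(N_\sigma(L))$ when $P_\sigma(L)$ is nef. Write $N = N_\sigma(L)$, so $L \equiv P + N$ with $P$ nef. On one hand, $\mathcal{J}(N_\sigma(L)) \subset \mathcal{J}_\sigma(L)$: the valuative criterion requires $v(\mathcal{J}(N)) > \sigma_v(L) - A(v)$, and since $P$ is nef one has $\sigma_v(L) = \sigma_v(P+N) = \sigma_v(N) = v(N)$ — here I use that adding a nef class does not change $\sigma_v$ (because $\sigma_v(P) = 0$ for nef $P$, as $P$ is a limit of ample classes and $v(\Vert P+\epsilon A\Vert)$ is controlled, and $\sigma_v$ is subadditive), so the needed inequality $v(\mathcal{J}(N)) > v(N) - A(v)$ is exactly the statement $\mathcal{J}(N) \subset \mathcal{J}(N)$ read valuatively via Theorem \ref{qvaluativedescription}-type logic — more precisely $v(\mathcal{J}(N)) \geq \lceil v(N) \rceil - (\text{something}) $, which I should check gives strict inequality since $A(v) \geq 1$ for divisorial $v$ and then extend by density. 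Conversely, for the containment $\mathcal{J}_\sigma(L) \subset \mathcal{J}(N)$: a local section $f \in \mathcal{J}_\sigma(L)$ satisfies $v(f) > \sigma_v(L) - A(v) = v(N) - A(v)$ for all divisorial $v$, hence $v(f) \geq v(N) - A(v) + (\text{a positive gap})$, which forces $f \in \mathcal{J}(N)$ by the log-resolution definition of $\mathcal{J}(N)$ (testing against the divisorial valuations appearing on a log resolution of $N$, where $v(f) > v(N) - A(v)$ means $v(f) + v(K_{Y/X}) \geq \lfloor v(N) \rfloor$ wait — here I mean $v(f) - v(\lfloor \text{stuff}\rfloor) \geq 0$ on the resolution). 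The main obstacle I anticipate is precisely this last point: making the passage between the \emph{strict} valuative inequalities characterizing $\mathcal{J}_\sigma(L)$ and the \emph{floor}-based definition of the honest multiplier ideal $\mathcal{J}(N)$ completely rigorous, and in particular verifying that the nef part $P$ genuinely contributes nothing — i.e. that $\sigma_v(P+N) = v(N)$ for every divisorial $v$ — which relies on $P$ being nef rather than merely pseudo-effective, and on the continuity/homogeneity properties of $\sigma_v$ recorded in Section \ref{backgroundsection}. Everything else is bookkeeping with Proposition \ref{perturbedvaluation}, Theorem \ref{qvaluativedescription}, and the birational rule.
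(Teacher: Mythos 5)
There is a genuine gap, and it sits exactly where you flag your ``main obstacle.'' Your argument for (1) rests on the valuative criterion ``$\mathfrak{q}\subset\mathcal{J}_{\sigma}(L)$ iff $v(\mathfrak{q})>\sigma_{v}(L)-A(v)$ for all (divisorial) $v$.'' Only the forward implication is available unconditionally; the reverse implication is Theorem \ref{diminishedvaluation}, which the paper proves only under extra hypotheses --- and its case (3), for divisors with a Zariski decomposition, is deduced \emph{from} Proposition \ref{zardecom}, so invoking it here would be circular. For general pseudo-effective $L$ the question is left open: pointwise strict inequalities $A(v)+v(\mathfrak{q})>\sigma_{v}(L)$ do not force $\lct^{\mathfrak{q}}_{\sigma}(L)>1$, since the infimum need not be attained. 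Your fallback for the upper bound in (1) also fails on two counts: the containment $\mathcal{J}_{\sigma}(L)\subset\mathcal{J}_{-}(L)$ gives $\ord_{\Gamma}(\mathcal{J}_{\sigma}(L))\geq\ord_{\Gamma}(\mathcal{J}_{-}(L))$ (a smaller ideal has larger order; your inequality is reversed), and the equality $\ord_{\Gamma}(\mathcal{J}_{-}(L))=\lfloor\sigma_{\Gamma}(L)\rfloor$ is false whenever $\sigma_{\Gamma}(L)$ is a positive integer --- in Example \ref{perturbedexample} one has $\sigma_{E}(\pi^{*}H+E)=1$ but $\mathcal{J}_{-}(\pi^{*}H+E)=\mathcal{O}_{S}$. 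The correct proof of (1) writes $\mathcal{J}_{\sigma}(L)=\mathcal{J}(\Vert(1+\epsilon)L+\delta A\Vert)$ for suitable small $\epsilon,\delta$, observes that $N_{\sigma}(L)\leq N_{\sigma}((1+\epsilon)L+\delta A)\leq(1+\epsilon)N_{\sigma}(L)$ so that the floor $\lfloor\sigma_{\Gamma}\rfloor$ is unchanged by the perturbation (the upscaling by $1+\epsilon$ exactly compensates the loss from adding $\delta A$), and then applies the codimension-one computation for the big divisor $(1+\epsilon)L+\delta A$ via Theorems \ref{bigconefunction} and \ref{qvaluativedescription}.

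For (2) the missing idea is a reduction that avoids controlling $\mathcal{J}_{\sigma}(L)$ at higher-codimension valuations altogether: pass to a further log resolution on which $P_{\sigma}$ is still nef and $N_{\sigma}$ has simple normal crossing support, so that $\mathcal{J}(N_{\sigma}(L))=\mathcal{O}_{X}(-\lfloor N_{\sigma}(L)\rfloor)$ is cut out by finitely many codimension-one conditions; the containment $\mathcal{J}_{\sigma}(L)\subset\mathcal{J}(N_{\sigma}(L))$ is then immediate from part (1). Your attempted valuative argument for this inclusion would again need the unproven criterion, and the identity $\sigma_{v}(L)=v(N_{\sigma}(L))$ for \emph{all} divisorial $v$ (not just those with divisorial center on the model where $P_{\sigma}$ is nef) is not established in the paper. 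The reverse inclusion is done essentially as you suggest, but more directly and without valuations: $(1+\epsilon)L+A=((1+\epsilon)P_{\sigma}(L)+A)+(1+\epsilon)N_{\sigma}(L)$ with the first summand ample, so $\mathcal{J}((1+\epsilon)N_{\sigma}(L))\subset\mathcal{J}(\Vert(1+\epsilon)L+A\Vert)$, and one then shrinks $\epsilon$ so that $\mathcal{J}((1+\epsilon)N_{\sigma}(L))=\mathcal{J}(N_{\sigma}(L))$.
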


\begin{proof}
(1) Fix an ample divisor $A$ and an $\epsilon > 0$.  Note that
\begin{equation*}
N_{\sigma}(L) \leq (1+\epsilon)N_{\sigma}(L) = N_{\sigma}((1+\epsilon)L)
\end{equation*}
with equality only when $N_{\sigma}(L)=0$.  Since the coefficients of $N_{\sigma}$ vary continuously upon perturbing by an ample divisor, for sufficiently small $\delta$ we have $N_{\sigma}(L) \leq  N_{\sigma}((1+\epsilon)L + \delta A)$.  In particular, for sufficiently small $\epsilon$ and $\delta$ we may ensure that
\begin{align*}
\lfloor \sigma_{\Gamma}(L) \rfloor & = \lfloor \sigma_{\Gamma}((1+\epsilon)L + \delta A) \rfloor.
\end{align*}
Choose $\epsilon$ and $\delta$ so that $\mathcal{J}_{\sigma}(L) = \mathcal{J}(\Vert (1+\epsilon)L + \delta A \Vert)$.  Since $(1+\epsilon)L + \delta A$ is big, Theorem \ref{bigconefunction} shows that
\begin{equation*}
v_{\Gamma}(\Vert (1+\epsilon)L + \delta A \Vert) = \sigma_{\Gamma}((1+\epsilon)L + \delta A).
\end{equation*}
Using valuations to compute the asymptotic multiplier ideal as in Theorem \ref{qvaluativedescription}, we have
\begin{align*}
\ord_{\Gamma}(\mathcal{J}(\Vert (1+\epsilon)L + \delta A \Vert)) & = \lfloor v_{\Gamma}(\Vert (1+\epsilon)L + \delta A \Vert) \rfloor \\
& = \lfloor \sigma_{\Gamma}((1+\epsilon)L + \delta A) \rfloor \\
& = \lfloor \sigma_{\Gamma}(L) \rfloor.
\end{align*}

%and the conclusion follows.

(2) Let $\psi: W \to Y$ be a log resolution of $N_{\sigma}(\phi^{*}L)$.  Note that $P_{\sigma}(\psi^{*}\phi^{*}L)$ is still nef so that $N_{\sigma}(\psi^{*}\phi^{*}L) = \psi^{*}N_{\sigma}(\phi^{*}L)$.  Applying the birational transformation rule, we may replace $X$ by $W$ so that $P_{\sigma}(L)$ is nef and $N_{\sigma}(L)$ has simple normal crossing support.

\cite{lazarsfeld04} Lemma 9.2.19 shows that  $\mathcal{J}(N_{\sigma}(L)) = \mathcal{O}_{X}(-\lfloor N_{\sigma}(L) \rfloor)$.  In particular $\mathcal{J}(N_{\sigma}(L)) \supset \mathcal{J}_{\sigma}(L)$ by (1).  Conversely, for any ample divisor $A$ we can write
\begin{equation*}
(1+\epsilon)L + A = ((1+\epsilon)P_{\sigma}(L) + A) + (1+\epsilon)N_{\sigma}(L)
\end{equation*}
where the first term is ample.  Thus $\mathcal{J}_{\sigma}(L) \supset \mathcal{J}((1+\epsilon)N_{\sigma}(L))$ for any $\epsilon > 0$.  When $\epsilon$ is sufficiently small $\mathcal{J}((1+\epsilon)N_{\sigma}(L)) = \mathcal{J}(N_{\sigma}(L))$, finishing the proof.
\end{proof}

%In conjunction with Corollary \ref{bigsigmaequality}, the following theorem implies Theorem \ref{zardecomcontainment}.

%\begin{thrm} \label{mostlypsefcontainment}
%Let $L$ be a pseudo-effective divisor with a Zariski decomposition.  Then
%\begin{equation*}
%\mathcal{J}(h_{min}) \subset \mathcal{J}_{\sigma}(L).
%\end{equation*}
%\end{thrm}

\begin{proof}[Proof of Theorem \ref{mostlypsefcontainment} (2):]
Suppose that $L$ has a Zariski decomposition.  By the arguments of the proof of Theorem \ref{idealcomparison}, for any birational map $\phi: Y \to X$ we have
\begin{equation*}
\mathcal{J}(h_{min}) \subset \phi_{*}(\mathcal{O}_{Y}(K_{Y/X}) \otimes \mathcal{J}(N_{\sigma}(\phi^{*}L))).
\end{equation*}
Proposition \ref{zardecom} finishes the proof.
\end{proof}

\subsection{Diminished ideals and the balanced decomposition}

While the asymptotic multiplier ideal behaves well on the interior of balanced regions, its behavior along the boundaries is more subtle.  Diminished ideals are a useful tool for understanding this behavior.  

\begin{defn}
Let $L$ be a pseudo-effective divisor.  We say that $L$ is weakly balanced if $\mathcal{J}_{\sigma}(L) = \mathcal{J}_{-}(L)$.
\end{defn}

\begin{rmk}
Corollary \ref{bigsigmaequality} will show that a big divisor is balanced if and only if it is weakly balanced.
\end{rmk}

\begin{prop} \label{perturbedstability}
Let $L$ be a pseudo-effective divisor.  Then
\begin{enumerate}
\item $L + \delta A$ is balanced for any ample divisor $A$ and any sufficiently small $\delta > 0$.
\item  $(1+\epsilon)L$ is weakly balanced for any sufficiently small $\epsilon > 0$.
\end{enumerate}
\end{prop}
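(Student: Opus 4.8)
The plan is to deduce both parts from the fact that the chains of ideals underlying $\mathcal{J}_{-}$ and $\mathcal{J}_{\sigma}$ stabilize, together with the observation that on the stable range the two ideals being compared must agree.

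For (1), note first that $L + \delta A$ is big for every $\delta > 0$, so ``balanced'' makes sense. The key step is to record that the function $s \mapsto \mathcal{J}(\Vert L + sA \Vert)$ on $\mathbb{R}_{>0}$ is non-increasing as $s \to 0^{+}$ — this is Corollary \ref{plusamplecontainment} — and bounded below since the ambient rings are Noetherian (cf.~Theorem \ref{haconconstr}); hence it is constant on some interval $(0, s_{0}]$, where by Definition \ref{perturbedidealdefn} its value is exactly $\mathcal{J}_{-}(L)$. Now for any $\delta \in (0, s_{0})$ I would argue that $\mathcal{J}(\Vert L + \delta A \Vert) = \mathcal{J}_{-}(L)$, while $\mathcal{J}_{-}(L + \delta A)$, being the stable value of $\mathcal{J}(\Vert L + (\delta + \tfrac{1}{t})A\Vert)$ as $t \to \infty$ and hence a value of $\mathcal{J}(\Vert L + sA\Vert)$ for $s \in (\delta, s_{0})$, is also $\mathcal{J}_{-}(L)$. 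Thus $\mathcal{J}(\Vert L + \delta A \Vert) = \mathcal{J}_{-}(L + \delta A)$, i.e.~$L + \delta A$ is balanced; this is essentially the density argument from the proof of the existence of balanced regions, made quantitative.

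For (2), I would run the analogous argument for the ascending chain $\{ \mathcal{J}_{-}((1+\epsilon)L) \}_{\epsilon > 0}$ supplied by the lemma preceding Definition \ref{sigmadefn}: it increases as $\epsilon \to 0^{+}$, so by Noetherianity it stabilizes, and its stable (hence maximal) value is $\mathcal{J}_{\sigma}(L)$ by definition. So there is $\epsilon_{0} > 0$ with $\mathcal{J}_{-}((1+\epsilon)L) = \mathcal{J}_{\sigma}(L)$ for all $\epsilon \in (0, \epsilon_{0}]$. Combining this with the remark after Definition \ref{sigmadefn}, which gives $\mathcal{J}_{\sigma}((1+\epsilon)L) = \mathcal{J}_{\sigma}(L)$ for $\epsilon$ small, one sees that for all sufficiently small $\epsilon$ both $\mathcal{J}_{-}((1+\epsilon)L)$ and $\mathcal{J}_{\sigma}((1+\epsilon)L)$ equal $\mathcal{J}_{\sigma}(L)$, so $(1+\epsilon)L$ is weakly balanced.

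Neither argument has a substantive difficulty; the content is bookkeeping with the two chains and the ample perturbations, using only Corollary \ref{plusamplecontainment}, the Noetherian stabilization behind $\mathcal{J}_{-}$ and $\mathcal{J}_{\sigma}$, and the remark after Definition \ref{sigmadefn}. The one point that needs care — and the closest thing to an obstacle — is to keep the parameter ranges open, i.e.~to stay strictly inside the region where the relevant chain has already stabilized, so that the one-sided limit implicit in the definition of $\mathcal{J}_{-}$ of a perturbed divisor does not overshoot the value at the endpoint.
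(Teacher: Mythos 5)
Your proof is correct and takes essentially the same route as the paper's: both parts reduce to the stabilization of the chains $\{\mathcal{J}(\Vert L + \tfrac{1}{t}A\Vert)\}$ and $\{\mathcal{J}_{-}((1+\epsilon)L)\}$ defining $\mathcal{J}_{-}$ and $\mathcal{J}_{\sigma}$, together with the observation that for a parameter strictly inside the stable range the two ideals in the balanced (resp.\ weakly balanced) condition both equal the stable value. The paper's proof is simply a terser version of your bookkeeping, including the point you flag about staying in the open part of the stabilized interval.
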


\begin{proof} $ $

\begin{enumerate}
\item Choose $\epsilon$ so that $\mathcal{J}_{-}(L) = \mathcal{J}(\Vert L + \epsilon A \Vert)$.  Then any $\delta < \epsilon$ will suffice.

\item Choose $\tau$ sufficiently small so that $\mathcal{J}_{\sigma}(L) = \mathcal{J}_{-}((1+\epsilon)L)$ for every positive $\epsilon < \tau$.  For any such $\epsilon$ we have $\mathcal{J}_{\sigma}((1+\epsilon)L) = \mathcal{J}_{\sigma}(L)$.
\end{enumerate}
\end{proof}

\begin{proof}[Proof of Theorem \ref{mostlypsefcontainment} (1):]
Note that if $L$ is weakly balanced then
\begin{equation*}
\mathcal{J}(h_{min}) \subset \mathcal{J}_{-}(L) = \mathcal{J}_{\sigma}(L)
\end{equation*}
by Theorem \ref{idealcomparison}.  Define $U \subset \mathbb{R}_{>0}$ to be the set of real numbers $\alpha$ such that $\alpha L$ is weakly balanced.  $U$ is open and dense in $\mathbb{R}_{>0}$ by Proposition \ref{perturbedstability}, proving Theorem \ref{mostlypsefcontainment} (1).
\end{proof}

\subsection{Diminished ideals and valuations}

By analogy with the usual log canonical thresholds, we define the $\sigma$-log canonical thresholds as follows.

\begin{defn}
Let $L$ be a pseudo-effective divisor on $X$.  For a non-zero ideal sheaf $\mathfrak{q}$ define
\begin{equation*}
\lct^{\mathfrak{q}}_{\sigma}(L) := \inf_{v \in \Val^{*}(X)} \frac{A(v) + v(\mathfrak{q})}{\sigma_{v}(L)}.
\end{equation*}
\end{defn}

\begin{thrm} \label{sigmalctthm}
Let $L$ be a pseudo-effective divisor on $X$ and let $\mathfrak{q}$ be an ideal sheaf on $X$.
\begin{enumerate}
\item $\mathfrak{q} \subset \mathcal{J}_{-}(L)$ iff $\lct^{\mathfrak{q}}_{\sigma}(L) \geq 1$.
\item $\mathfrak{q} \subset \mathcal{J}_{\sigma}(L)$ iff $\lct^{\mathfrak{q}}_{\sigma}(L) > 1$.
\end{enumerate}
\end{thrm}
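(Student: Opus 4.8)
The plan is to deduce both parts from the valuative description of the perturbed ideal already established in Proposition~\ref{perturbedvaluation}, together with the scaling behavior of $\sigma_v$ and the definition of $\mathcal{J}_\sigma$ as a limit of perturbed ideals. First I would reduce to the case $\kappa(L) \geq 0$ (otherwise every ideal in sight is $0$ and the statements are vacuous or trivial), and then pass to a $\mathbb{Q}$-divisor: since $\mathcal{J}_-$ and $\mathcal{J}_\sigma$ depend only on the numerical class and vary appropriately under rescaling, and since $\sigma_v$ and $A(v)$ scale homogeneously, it suffices to prove the statements for $\mathbb{Q}$-divisors $L$, where Proposition~\ref{perturbedvaluation} applies directly.

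For part (1), the forward direction is immediate: if $\mathfrak{q} \subset \mathcal{J}_-(L)$ then by Proposition~\ref{perturbedvaluation} we have $v(\mathfrak{q}) \geq \sigma_v(L) - A(v)$ for every $v \in \Val^*(X)$, which rearranges to $\frac{A(v) + v(\mathfrak{q})}{\sigma_v(L)} \geq 1$ whenever $\sigma_v(L) > 0$ (and is vacuous otherwise), so $\lct^{\mathfrak{q}}_\sigma(L) \geq 1$. Conversely, if $\lct^{\mathfrak{q}}_\sigma(L) \geq 1$ then $v(\mathfrak{q}) \geq \sigma_v(L) - A(v)$ for all $v$ with $\sigma_v(L) > 0$; for $v$ with $\sigma_v(L) = 0$ the inequality $v(\mathfrak{q}) \geq -A(v)$ holds trivially since $v(\mathfrak{q}) \geq 0$. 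So the valuative criterion of Proposition~\ref{perturbedvaluation} is satisfied and $\mathfrak{q} \subset \mathcal{J}_-(L)$.

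For part (2), I would use that $\mathcal{J}_\sigma(L)$ is the maximal (equivalently, stable) value of $\mathcal{J}_-((1+\epsilon)L)$ as $\epsilon \to 0^+$, so $\mathfrak{q} \subset \mathcal{J}_\sigma(L)$ iff $\mathfrak{q} \subset \mathcal{J}_-((1+\epsilon)L)$ for all sufficiently small $\epsilon > 0$. By part (1) applied to $(1+\epsilon)L$, this holds iff $\lct^{\mathfrak{q}}_\sigma((1+\epsilon)L) \geq 1$ for all small $\epsilon$. Now $\sigma_v((1+\epsilon)L) = (1+\epsilon)\sigma_v(L)$ (from the definition of $\sigma_v$ via perturbation and Corollary~\ref{rescaleval}), so $\lct^{\mathfrak{q}}_\sigma((1+\epsilon)L) = \frac{1}{1+\epsilon}\lct^{\mathfrak{q}}_\sigma(L)$. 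Thus $\mathfrak{q} \subset \mathcal{J}_\sigma(L)$ iff $\frac{1}{1+\epsilon}\lct^{\mathfrak{q}}_\sigma(L) \geq 1$ for all small $\epsilon > 0$, which is exactly the condition $\lct^{\mathfrak{q}}_\sigma(L) > 1$.

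The step I expect to require the most care is the reduction to $\mathbb{Q}$-divisors and the bookkeeping around the case $\sigma_v(L) = 0$ (including the degenerate possibility that $\sigma_v(L) = 0$ for \emph{all} $v$, i.e.\ $L$ is numerically equivalent to a divisor with $N_\sigma = 0$ on a suitable model, forcing $\lct^{\mathfrak{q}}_\sigma(L) = \infty$), which must be handled consistently with the convention that $\lct^{\mathfrak{q}}_\sigma$ takes values in $\mathbb{R}_{\geq 0} \cup \{\infty\}$. The homogeneity argument for $\sigma_v$ under rescaling by $(1+\epsilon)$ should be a direct consequence of $v(\Vert t(L+\epsilon A)\Vert) = t\,v(\Vert L + \epsilon A\Vert)$ from Corollary~\ref{rescaleval} followed by the limit $\epsilon \to 0$, but one should confirm the limit commutes with the scaling, which is clear since scaling by a fixed positive constant is continuous.
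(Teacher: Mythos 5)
Your argument is correct and is essentially the paper's own proof: part (1) is read off from the valuative criterion of Proposition~\ref{perturbedvaluation}, and part (2) follows from the homogeneity $\lct^{\mathfrak{q}}_{\sigma}((1+\epsilon)L)=\tfrac{1}{1+\epsilon}\lct^{\mathfrak{q}}_{\sigma}(L)$ together with the stabilization $\mathcal{J}_{\sigma}(L)=\mathcal{J}_{-}((1+\epsilon)L)$ for small $\epsilon$. One small caveat: your proposed reduction to $\mathbb{Q}$-divisors by rescaling does not actually work (a pseudo-effective $\mathbb{R}$-divisor need not be a positive multiple of a $\mathbb{Q}$-divisor), but it is also unnecessary, since the proof of Proposition~\ref{perturbedvaluation} uses only the $\mathbb{R}$-divisor versions of Theorem~\ref{qvaluativedescription}, Lemma~\ref{observation}, and Lemma~\ref{divdensity} and hence holds verbatim for $\mathbb{R}$-divisors.
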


\begin{proof}
The first statement is a consequence of Proposition \ref{perturbedvaluation}.  To prove the second, note that $\lct_{\sigma}^{\mathfrak{q}}((1+\epsilon)L) = \frac{1}{1+\epsilon} \lct_{\sigma}^{\mathfrak{q}}(L)$.  Thus (2) follows from (1) by noting that $\mathcal{J}_{-}((1+\epsilon)L) = \mathcal{J}_{\sigma}(L)$ for every sufficiently small $\epsilon$.
\end{proof}

%For emphasis, we point out a fact that has already been referenced several times.

\begin{cor} \label{bigsigmaequality}
Let $L$ be a big divisor.  Then $\mathcal{J}_{\sigma}(L) = \mathcal{J}(\Vert L \Vert)$.
\end{cor}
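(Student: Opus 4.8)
The plan is to reduce everything to the valuative characterizations already in hand. Since $L$ is big it has $\kappa(L) \geq 0$, so both $\mathcal{J}(\Vert L \Vert)$ and $\mathcal{J}_{\sigma}(L)$ are defined, and the key point is that $\sigma_{v}$ and $v(\Vert - \Vert)$ agree on the big cone: for \emph{every} $v \in \Val^{*}(X)$ we have $\sigma_{v}(L) = v(\Vert L \Vert)$, by the remark following the definition of $\sigma_{v}$. One direction of the statement is then immediate: by Corollary \ref{nestedideals} we have $\mathcal{J}(\Vert (1+\epsilon)L \Vert) = \mathcal{J}(\Vert L \Vert)$ for sufficiently small $\epsilon \in \mathbb{Q}_{>0}$ (as $L$ is big), and since $\mathcal{J}(\Vert (1+\epsilon)L \Vert) \subset \mathcal{J}_{-}((1+\epsilon)L) = \mathcal{J}_{\sigma}(L)$ this gives $\mathcal{J}(\Vert L \Vert) \subset \mathcal{J}_{\sigma}(L)$. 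The content is the reverse inclusion, and I would get both at once from the log canonical threshold comparison below.

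First I would fix an arbitrary non-zero ideal sheaf $\mathfrak{q}$ and compare thresholds. Straight from the definitions, using $\sigma_{v}(L) = v(\Vert L \Vert)$,
\begin{equation*}
\lct^{\mathfrak{q}}_{\sigma}(L) = \inf_{v \in \Val^{*}(X)} \frac{A(v) + v(\mathfrak{q})}{\sigma_{v}(L)} = \inf_{v \in \Val^{*}(X)} \frac{A(v) + v(\mathfrak{q})}{v(\Vert L \Vert)} = \lct^{\mathfrak{q}}(\Vert L \Vert).
\end{equation*}
Now I would feed this into the two membership criteria. On one side, Theorem \ref{sigmalctthm}(2) gives $\mathfrak{q} \subset \mathcal{J}_{\sigma}(L)$ iff $\lct^{\mathfrak{q}}_{\sigma}(L) > 1$. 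On the other side, $\mathfrak{q} \subset \mathcal{J}(\Vert L \Vert)$ iff $\lct^{\mathfrak{q}}(\Vert L \Vert) > 1$: this is \cite{jm10} Corollary 2.16 applied to the graded sequence $\mathfrak{a}_{\bullet}$ attached to an effective divisor $\mathbb{Q}$-linearly equivalent to $L$, using $v(\mathfrak{a}_{\bullet}) = v(\Vert L \Vert)$ and $\mathcal{J}(\mathfrak{a}_{\bullet}) = \mathcal{J}(\Vert L \Vert)$ from the proposition in Section \ref{asymmultidealsection}; equivalently it is Theorem \ref{qvaluativedescription} combined with the fact, used in its proof, that $\lct^{\mathfrak{q}}(\mathfrak{a}_{\bullet})$ is actually computed by some valuation. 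Combining with the displayed equality of thresholds yields $\mathfrak{q} \subset \mathcal{J}_{\sigma}(L) \iff \mathfrak{q} \subset \mathcal{J}(\Vert L \Vert)$.

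Finally I would upgrade this to equality of ideals: both $\mathcal{J}_{\sigma}(L)$ and $\mathcal{J}(\Vert L \Vert)$ are themselves non-zero ideal sheaves, so taking $\mathfrak{q} = \mathcal{J}_{\sigma}(L)$ gives $\mathcal{J}_{\sigma}(L) \subset \mathcal{J}(\Vert L \Vert)$ and taking $\mathfrak{q} = \mathcal{J}(\Vert L \Vert)$ gives the reverse. I do not expect a serious obstacle: all the real work is in Theorem \ref{sigmalctthm}, Theorem \ref{qvaluativedescription}, and the identity $\sigma_{v} = v(\Vert - \Vert)$ on the big cone. The one place to be careful is that the criterion for $\mathcal{J}(\Vert L \Vert)$ must be read as a genuine strict inequality for $\lct^{\mathfrak{q}}$ (that ``$v(\mathfrak{q}) > v(\Vert L \Vert) - A(v)$ for all $v$'' is equivalent to ``$\lct^{\mathfrak{q}}(\Vert L \Vert) > 1$''), which is precisely why one invokes that the threshold is attained by an honest valuation and not merely an infimum.
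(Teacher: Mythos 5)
Your proof is correct and is essentially the paper's own argument, which simply says the two ideals ``have the same valuative description since $\sigma_{v}(L) = v(\Vert L \Vert)$ for every $v \in \Val(X)$'' when $L$ is big; you have filled in exactly the details the paper leaves implicit, namely the identity $\lct^{\mathfrak{q}}_{\sigma}(L) = \lct^{\mathfrak{q}}(\Vert L \Vert)$ and the two membership criteria from Theorem \ref{sigmalctthm}(2) and Theorem \ref{qvaluativedescription}. Your closing caveat about the strict inequality being equivalent to $\lct^{\mathfrak{q}} > 1$ only because the threshold is computed by an actual valuation is the right point to flag, and it is exactly how the paper's proof of Theorem \ref{qvaluativedescription} handles it.
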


\begin{proof}
The two ideals have the same valuative description since $\sigma_{v}(L) = v(\Vert L \Vert)$ for every $v \in \Val(X)$.
\end{proof}

\begin{cor} \label{sigmalctlim}
Let $L$ be a pseudo-effective divisor.  Fix an ample divisor $A$.  Then $\lct_{\sigma}^{\mathfrak{q}}(L) = \lim_{\epsilon \to 0^{+}} \lct^{\mathfrak{q}}(L + \epsilon A)$.
\end{cor}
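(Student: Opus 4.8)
The plan is to interpret both sides through the characterizations already available: $\lct_{\sigma}^{\mathfrak{q}}$ via Theorem \ref{sigmalctthm}, and $\lct^{\mathfrak{q}}(L+\epsilon A)$ via the valuative description $\lct^{\mathfrak{q}}(\Vert M \Vert) = \inf_{v} \frac{A(v)+v(\mathfrak{q})}{v(\Vert M \Vert)}$ coming from Theorem \ref{qvaluativedescription}. The key structural fact linking them is that $\sigma_{v}(L) = \lim_{\epsilon \to 0^{+}} v(\Vert L + \epsilon A \Vert)$ by the very definition of the $\sigma$-valuation, together with Lemma \ref{observation} which says $v(\Vert L+\epsilon A\Vert)$ is decreasing in $\epsilon$, hence $v(\Vert L+\epsilon A \Vert) \geq \sigma_{v}(L)$ for all $\epsilon>0$. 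Since each term of the infimum defining $\lct^{\mathfrak{q}}(L+\epsilon A)$ is $\frac{A(v)+v(\mathfrak{q})}{v(\Vert L+\epsilon A\Vert)} \leq \frac{A(v)+v(\mathfrak{q})}{\sigma_{v}(L)}$, we immediately get $\lct^{\mathfrak{q}}(L+\epsilon A) \leq \lct_{\sigma}^{\mathfrak{q}}(L)$ for every $\epsilon > 0$; and moreover Corollary \ref{plusamplecontainment} (or directly Lemma \ref{observation}) shows $\lct^{\mathfrak{q}}(L+\epsilon A)$ is monotone in $\epsilon$, so the limit exists and is $\leq \lct_{\sigma}^{\mathfrak{q}}(L)$.

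For the reverse inequality I would argue at the level of the ideals rather than the thresholds, which is cleaner. Fix a rational $c < \lim_{\epsilon\to 0^{+}} \lct^{\mathfrak{q}}(L+\epsilon A)$; then $c < \lct^{\mathfrak{q}}(L+\epsilon A)$ for all sufficiently small $\epsilon>0$, so $\mathfrak{q} \subset \mathcal{J}(c\Vert L+\epsilon A\Vert)$ for all small $\epsilon$. Rescaling, $\mathfrak{q} \subset \mathcal{J}(\Vert c(L+\epsilon A)\Vert) = \mathcal{J}(\Vert cL + c\epsilon A\Vert)$; as $c\epsilon A$ ranges over arbitrarily small ample classes, Definition \ref{perturbedidealdefn} gives $\mathfrak{q} \subset \mathcal{J}_{-}(cL)$, and then by Theorem \ref{sigmalctthm}(1) applied to $cL$ we get $\lct_{\sigma}^{\mathfrak{q}}(cL) \geq 1$, i.e. $\lct_{\sigma}^{\mathfrak{q}}(L) \geq c$. (Here I use the homogeneity $\lct_{\sigma}^{\mathfrak{q}}(cL) = \frac{1}{c}\lct_{\sigma}^{\mathfrak{q}}(L)$, which follows from Corollary \ref{rescaleval}/scaling of $\sigma_v$, just as recorded inside the proof of Theorem \ref{sigmalctthm}.) Letting $c$ increase to $\lim_{\epsilon\to 0^{+}}\lct^{\mathfrak{q}}(L+\epsilon A)$ through rationals yields $\lct_{\sigma}^{\mathfrak{q}}(L) \geq \lim_{\epsilon\to 0^{+}}\lct^{\mathfrak{q}}(L+\epsilon A)$, completing the argument.

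The only mild subtlety — and the step I would expect to need the most care — is the passage from "$\mathfrak{q}\subset\mathcal{J}(\Vert cL + c\epsilon A\Vert)$ for all small $\epsilon$" to "$\mathfrak{q}\subset\mathcal{J}_{-}(cL)$": one must match the continuous family of perturbations $c\epsilon A$ against the discrete chain $\{\mathcal{J}(\Vert cL + \tfrac1t A\Vert)\}$ used in Definition \ref{perturbedidealdefn}. This is harmless since the chain is finite and eventually stabilizes to $\mathcal{J}_{-}(cL)$, so any cofinal sequence of small ample perturbations computes it; but it is worth stating explicitly. Everything else is bookkeeping with the monotonicity and scaling lemmas already in hand, so no genuinely new input beyond Theorem \ref{sigmalctthm} and Theorem \ref{qvaluativedescription} is needed.
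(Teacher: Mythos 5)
There is a genuine error in your first paragraph: the monotonicity goes the wrong way. Lemma \ref{observation} says $v(\Vert L+\epsilon A\Vert)$ is \emph{decreasing} in $\epsilon$, so its values increase as $\epsilon$ decreases and the limit defining $\sigma_{v}(L)$ is the supremum $\sup_{\epsilon>0}v(\Vert L+\epsilon A\Vert)$. Hence $v(\Vert L+\epsilon A\Vert)\leq\sigma_{v}(L)$ for every $\epsilon>0$, not $\geq$, and each term of the infimum satisfies $\frac{A(v)+v(\mathfrak{q})}{v(\Vert L+\epsilon A\Vert)}\geq\frac{A(v)+v(\mathfrak{q})}{\sigma_{v}(L)}$, which gives $\lct^{\mathfrak{q}}(L+\epsilon A)\geq\lct^{\mathfrak{q}}_{\sigma}(L)$. (This is also what one expects: perturbing by an ample divisor only improves positivity, so the threshold can only go up.) The consequence is that your two paragraphs, as labeled, both assert the \emph{same} inequality $\lim_{\epsilon\to 0^{+}}\lct^{\mathfrak{q}}(L+\epsilon A)\leq\lct^{\mathfrak{q}}_{\sigma}(L)$ --- the first incorrectly, the second correctly --- and the opposite inequality is never established, so the proof as written is incomplete.

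The repair is immediate: with the sign corrected, your first paragraph proves exactly the missing direction, namely $\lct^{\mathfrak{q}}(L+\epsilon A)\geq\lct^{\mathfrak{q}}_{\sigma}(L)$ for every $\epsilon>0$ and hence $\lim_{\epsilon\to 0^{+}}\lct^{\mathfrak{q}}(L+\epsilon A)\geq\lct^{\mathfrak{q}}_{\sigma}(L)$, while your second paragraph correctly yields $\lct^{\mathfrak{q}}_{\sigma}(L)\geq\lim_{\epsilon\to 0^{+}}\lct^{\mathfrak{q}}(L+\epsilon A)$; together they give the corollary. Note that your second paragraph is essentially the paper's own argument: the paper chains the equivalences $\lct^{\mathfrak{q}}_{\sigma}(L)\geq c\Leftrightarrow\mathfrak{q}\subset\mathcal{J}_{-}(cL)\Leftrightarrow\mathfrak{q}\subset\mathcal{J}(c\Vert L+\epsilon A\Vert)$ for all $\epsilon>0$, using Theorem \ref{sigmalctthm}(1), and lets $c$ vary, which packages both inequalities at once. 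Your concern about matching the continuous family of ample perturbations against the discrete chain in Definition \ref{perturbedidealdefn} is indeed harmless, for the reason you give.
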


\begin{proof}
Fix $c>0$.  We have $\mathfrak{q} \subset \mathcal{J}_{-}(cL)$ iff $\mathfrak{q} \subset \mathcal{J}(c\Vert L + \epsilon A \Vert)$ for every $\epsilon > 0$.  The latter condition is equivalent to $\lct^{\mathfrak{q}}(\Vert L + \epsilon A \Vert) > c$ for every $\epsilon > 0$.  Theorem \ref{sigmalctthm} (1) shows that the former condition is equivalent to $\lct^{\mathfrak{q}}_{\sigma}(L) \geq c$.  Letting $c$ vary, we obtain the statement.
\end{proof}

In applications, it is crucial to know whether the infimum in the definition of $\lct^{\mathfrak{q}}_{\sigma}$ is achieved by some valuation.  The following theorem gives a partial solution to this problem.

\begin{thrm} \label{diminishedvaluation}
Let $L$ be a pseudo-effective divisor on $X$ and $\mathfrak{q}$ an ideal sheaf on $X$.  Suppose that
\begin{equation*} \label{diminished} \tag{**}
v(\mathfrak{q}) > \sigma_{v}(L) - A(v)
\end{equation*}
for every $v \in \Val^{*}(X)$.  Then $\mathfrak{q} \subset \mathcal{J}_{\sigma}(L)$ if any of the following hold:
\begin{enumerate}
\item $L$ is big.
\item $L$ is weakly balanced.  In this case it suffices to verify \eqref{diminished} for divisorial valuations.
\item $L$ has a Zariski decomposition.  In this case it suffices to verify \eqref{diminished} for divisorial valuations.
%\item The function $eval_{L}: \{ v \in \Val^{*}(X) | A(v) < \infty \} \to \mathbb{R}$ taking $v \mapsto \sigma_{v}(L)$ is continuous.
\end{enumerate}
\end{thrm}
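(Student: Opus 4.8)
The plan is to treat the three cases separately. The conceptual obstruction that forces a case-by-case argument — and the reason the theorem is only a partial result — is this: the hypothesis \eqref{diminished} is a \emph{pointwise} strict inequality $A(v)+v(\mathfrak{q})>\sigma_{v}(L)$, whereas by Theorem \ref{sigmalctthm} the membership $\mathfrak{q}\subset\mathcal{J}_{\sigma}(L)$ is equivalent to the \emph{uniform} bound $\lct^{\mathfrak{q}}_{\sigma}(L)>1$, and a pointwise strict inequality need not survive an infimum. Each of the three hypotheses supplies a substitute for this missing uniformity. In case (1), if $L$ is big then $\sigma_{v}(L)=v(\Vert L\Vert)$ for every valuation and $\mathcal{J}_{\sigma}(L)=\mathcal{J}(\Vert L\Vert)$ by Corollary \ref{bigsigmaequality}, so \eqref{diminished} is literally the valuative criterion of Theorem \ref{qvaluativedescription} for membership in $\mathcal{J}(\Vert L\Vert)$; here the gap is bridged by the input from \cite{jm10} that the relevant log canonical threshold is computed by a genuine valuation, so that the pointwise condition already characterizes membership.

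For case (2), where $L$ is weakly balanced and hence $\mathcal{J}_{\sigma}(L)=\mathcal{J}_{-}(L)$, I would first pass from divisorial to arbitrary valuations at the cost of weakening ``$>$'' to ``$\geq$''. Given $w(\mathfrak{q})>\sigma_{w}(L)-A(w)$ for every divisorial $w$ and an arbitrary $v\in\Val^{*}(X)$ with $A(v)<\infty$, choose by Lemma \ref{divdensity} a net of divisorial valuations $w_{\alpha}\to v$ with $A(w_{\alpha})\to A(v)$; since $w\mapsto w(\mathfrak{q})$ is continuous and $w\mapsto\sigma_{w}(L)$ is lower semicontinuous on the locus $\{A<\infty\}$ (being the supremum over $\epsilon>0$ of the functions $w\mapsto w(\Vert L+\epsilon A\Vert)$, each continuous there by Proposition \ref{gradedcontinuity}), passing to the limit along the net yields $v(\mathfrak{q})\geq\sigma_{v}(L)-A(v)$ — this is exactly the argument in the proof of Proposition \ref{perturbedvaluation}, now carried out directly for $\mathbb{R}$-divisors. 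Since this holds for every $v$, we get $\lct^{\mathfrak{q}}_{\sigma}(L)\geq 1$, whence $\mathfrak{q}\subset\mathcal{J}_{-}(L)=\mathcal{J}_{\sigma}(L)$ by Theorem \ref{sigmalctthm}(1). The loss of strictness costs nothing here precisely because $\mathcal{J}_{-}$, in contrast to $\mathcal{J}_{\sigma}$, is governed by the \emph{non-strict} threshold inequality.

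For case (3), pick $\phi:Y\to X$ with $P_{\sigma}(\phi^{*}L)$ nef and, following the proof of Proposition \ref{zardecom}(2), compose with a log resolution of $N_{\sigma}(\phi^{*}L)$ to obtain $\pi:W\to X$ with $P_{\sigma}(\pi^{*}L)$ nef and $N:=N_{\sigma}(\pi^{*}L)=\pi^{*}N_{\sigma}(\phi^{*}L)$ of simple normal crossing support; Proposition \ref{zardecom}(2) together with $\mathcal{J}(N)=\mathcal{O}_{W}(-\lfloor N\rfloor)$ then gives $\mathcal{J}_{\sigma}(L)=\pi_{*}\mathcal{O}_{W}(K_{W/X}-\lfloor N\rfloor)$. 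Since $\pi$ is birational with $\pi_{*}\mathcal{O}_{W}=\mathcal{O}_{X}$ we have $\mathfrak{q}\subseteq\pi_{*}(\mathfrak{q}\cdot\mathcal{O}_{W})$, so it suffices to verify $\mathfrak{q}\cdot\mathcal{O}_{W}\subseteq\mathcal{O}_{W}(K_{W/X}-\lfloor N\rfloor)$, that is, $v_{D}(\mathfrak{q})+\ord_{D}(K_{W/X})\geq\lfloor\ord_{D}N\rfloor$ for every prime divisor $D$ on $W$, where $v_{D}=\ord_{D}\in\Val^{*}(X)$. Writing $\ord_{D}N=\sigma_{v_{D}}(L)$ and $\ord_{D}(K_{W/X})=A(v_{D})-1$, this reads $v_{D}(\mathfrak{q})+A(v_{D})\geq\lfloor\sigma_{v_{D}}(L)\rfloor+1$; but \eqref{diminished} at $v_{D}$ gives $v_{D}(\mathfrak{q})+A(v_{D})>\sigma_{v_{D}}(L)$, and the left-hand side is a positive integer, so it is at least $\lfloor\sigma_{v_{D}}(L)\rfloor+1$. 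Only divisorial valuations intervene, and it is the strictness of \eqref{diminished} that licenses the rounding; a merely non-strict hypothesis would yield only $\mathfrak{q}\subset\mathcal{J}_{-}(L)$, which — for instance for $\pi^{*}H+E$ on the blow-up of $\mathbb{P}^{2}$ — is in general strictly larger than $\mathcal{J}_{\sigma}(L)$.

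The only genuine difficulty I anticipate is the pointwise-versus-uniform gap described at the outset: there is no device that promotes \eqref{diminished} to $\lct^{\mathfrak{q}}_{\sigma}(L)>1$ for an arbitrary pseudo-effective $L$, which is exactly why the statement is organized around the three special hypotheses. The remaining points are routine bookkeeping — notably that Proposition \ref{perturbedvaluation} is recorded only for $\mathbb{Q}$-divisors, so its semicontinuity argument must be reproduced for $\mathbb{R}$-divisors in case (2), and that the birational reduction of Proposition \ref{zardecom}(2) is compatible with the ideal $\mathfrak{q}$, which is immediate from $\pi_{*}\mathcal{O}_{W}=\mathcal{O}_{X}$.
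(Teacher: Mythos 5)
Your proof is correct and takes essentially the same route as the paper, whose entire argument for this theorem consists of citing Theorem \ref{qvaluativedescription} together with $\sigma_{v}(L)=v(\Vert L\Vert)$ for (1), Proposition \ref{perturbedvaluation} for (2), and Proposition \ref{zardecom} for (3). What you have written is precisely the expansion of those citations, including the net-limit semicontinuity argument hidden in Proposition \ref{perturbedvaluation} for case (2) and the integrality/rounding step that makes Proposition \ref{zardecom}(2) yield the strict-inequality criterion in case (3).
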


\begin{proof}
(1) follows from Theorem \ref{qvaluativedescription} and the equality $\sigma_{v}(L) = v(\Vert L \Vert)$.  (2) follows from Proposition \ref{perturbedvaluation}.  (3) follows from Proposition \ref{zardecom}.
\end{proof}

One wonders whether the infimum for $\lct^{\mathfrak{q}}_{\sigma}$ is achieved by an evaluation for every pseudo-effective $L$.  In particular:

%\begin{ques}
%Let $L$ be a pseudo-effective divisor.  Is the function $eval_{L}: \{ v \in \Val^{*}(X) | A(v) < \infty \} \to \mathbb{R}$ taking $v \mapsto \sigma_{v}(L)$ continuous?
%\end{ques}

\begin{ques}
Let $L$ be a pseudo-effective divisor.  Is there a graded sequence of ideals $\mathfrak{a}_{\bullet}$ such that $\mathcal{J}(\mathfrak{a}_{\bullet}) = \mathcal{J}_{\sigma}(L)$ and $v(\mathfrak{a}_{\bullet}) = \sigma_{v}(L)$ for every valuation $v \in \Val(X)$?
\end{ques}

\begin{rmk}
When $X$ is a surface Theorem \ref{diminishedvaluation} shows that $\mathcal{J}_{\sigma}(L)$ is determined by divisorial valuations.  It would be interesting to see a (higher-dimensional) example where this does not hold.
\end{rmk}

\subsection{Abundant Divisors}

Abundant divisors, introduced by \cite{nakayama04} and \cite{bdpp04}, form a class of pseudo-effective divisors with particularly nice asymptotic behavior.  In this section we will show that abundance can be detected using multiplier ideals on birational models of $X$.  Let $\nu(L)$ denote the numerical dimension as defined by \cite{nakayama04} and \cite{bdpp04}.

\begin{thrm}[\cite{lehmann10}, Theorem 6.1] \label{abundancedefn}
Let $X$ be a smooth variety and $L$ be a pseudo-effective divisor.  The following conditions are equivalent:
\begin{enumerate}
\item $\kappa(L) = \nu(L)$.
\item There is a birational map $\phi: Y \to X$ and a morphism $f: Y \to Z$ such that $P_{\sigma}(f^{*}L) \sim_{\mathbb{Q}} f^{*}B$ for some big divisor $B$ on $Z$.
\item Let $\phi: Y \to X$ denote a birational map resolving the Iitaka fibration $f: Y \to Z$.  Then $\nu(\phi^{*}L|_{F}) = 0$ for a general fiber $F$ of $f$.
\end{enumerate}
If these conditions hold for $L$, we say that $L$ is abundant.
\end{thrm}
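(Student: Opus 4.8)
The plan is to fix a birational model $\phi: Y \to X$ resolving the Iitaka fibration $f: Y \to Z$ of $L$, so that $\dim Z = \kappa(L) = \kappa(\phi^{*}L)$ and $\kappa(\phi^{*}L|_{F}) = 0$ for a general fiber $F$, and to establish the additivity formula
\begin{equation*}
\nu(\phi^{*}L) \;=\; \dim Z + \nu(\phi^{*}L|_{F}).
\end{equation*}
Since $\nu$ is a birational invariant and $\dim Z = \kappa(L)$, this reads $\nu(L) = \kappa(L) + \nu(\phi^{*}L|_{F})$, so $(1) \Leftrightarrow (3)$ is immediate. Throughout I would use the standard properties of the numerical dimension from \cite{nakayama04}: birational and numerical invariance, $\nu(cD) = \nu(D)$ for $c > 0$, $\nu(D) = \nu(P_{\sigma}(D))$, $0 \le \kappa(D) \le \nu(D)$, the equality $\nu(D) = \dim(\text{ambient variety})$ exactly when $D$ is big, the relation $\nu(f^{*}B) = \nu(B)$ for a fibration $f$, and the compatibility $N_{\sigma}(\phi^{*}L)|_{F} = N_{\sigma}(\phi^{*}L|_{F})$ over a general fiber.

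The ``$\le$'' half of the additivity formula is the easy-addition estimate $\nu(Y,D) \le \dim Z + \nu(F,D|_{F})$, valid for any fibration. The ``$\ge$'' half is the main obstacle and the only place requiring real positivity input. By the Iitaka fibration theorem of \cite{nakayama04} there are an integer $m > 0$, an ample divisor $H$ on $Z$, and an effective $E$ with $m\phi^{*}L \equiv f^{*}H + E$, so that $\nu(\phi^{*}L) = \nu(f^{*}H + E)$ and $\nu(F, E|_{F}) = \nu(F, \phi^{*}L|_{F})$; it thus suffices to prove $\nu(f^{*}H + E) \ge \dim Z + \nu(F, E|_{F})$. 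I would do this cohomologically: for a fixed ample $A$ on $Y$, the projection formula gives
\begin{equation*}
h^{0}\bigl(Y,\, t(f^{*}H + E) + A\bigr) \;=\; h^{0}\bigl(Z,\, \mathcal{O}_{Z}(tH) \otimes f_{*}\mathcal{O}_{Y}(tE + A)\bigr),
\end{equation*}
and the torsion-free sheaf $f_{*}\mathcal{O}_{Y}(tE + A)$ has generic rank $h^{0}(F, (tE+A)|_{F})$, which grows at least like $t^{\nu(F, E|_{F})}$ along a suitable sequence of $t$; twisting a sheaf of that rank on the $(\dim Z)$-dimensional base $Z$ by the $t$-th power of an ample divisor contributes a further factor of order $t^{\dim Z}$ to $h^{0}$. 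Making this quantitative — in particular controlling how ampleness interacts with the varying sheaves $f_{*}\mathcal{O}_{Y}(tE + A)$ — is the technical heart of the argument, and is essentially the content of Nakayama's analysis of abundant divisors.

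To bring $(2)$ into the cycle: for $(2) \Rightarrow (3)$, if $P_{\sigma}(f^{*}L) \sim_{\mathbb{Q}} f^{*}B$ with $B$ big on $Z$, then $\nu(L) = \nu(P_{\sigma}(f^{*}L)) = \nu(f^{*}B) = \dim Z$, while $\kappa(L) = \kappa(f^{*}L) \ge \kappa(f^{*}B) = \dim Z$; since also $\kappa(L) \le \nu(L) = \dim Z$, we get $\kappa(L) = \dim Z$. Restricting to a general fiber, $(f^{*}B)|_{F} = 0$ forces $P_{\sigma}(f^{*}L|_{F}) = P_{\sigma}(f^{*}L)|_{F} \sim_{\mathbb{Q}} 0$, whence $\nu(f^{*}L|_{F}) = 0$; moreover $f^{*}L|_{F} \sim_{\mathbb{Q}} N_{\sigma}(f^{*}L|_{F}) \ge 0$ gives $\kappa(f^{*}L|_{F}) = 0$, so $f$ is a model of the Iitaka fibration and $(3)$ holds by birational invariance of $\nu$. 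For $(3) \Rightarrow (2)$, the hypothesis $\nu(\phi^{*}L|_{F}) = 0$ gives $P_{\sigma}(\phi^{*}L)|_{F} = P_{\sigma}(\phi^{*}L|_{F}) \equiv 0$, so $P_{\sigma}(\phi^{*}L)$ is numerically trivial on general fibers; a descent lemma (using that $P_{\sigma}(\phi^{*}L)$ is movable, together with the Negativity Lemma, after a further modification of $Y$ and $Z$) yields $P_{\sigma}(\phi^{*}L) \equiv f^{*}D$ for an $\mathbb{R}$-divisor $D$ on $Z$; then $\nu(f^{*}D) = \nu(L)$ combined with $\kappa(L) = \dim Z \le \nu(L) \le \dim Z$ shows $D$ is big, and finally one upgrades numerical equivalence to $\mathbb{Q}$-linear equivalence using $\kappa(P_{\sigma}(\phi^{*}L)) = \kappa(\phi^{*}L) = \dim Z$ and one last modification, as in \cite{nakayama04}. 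Apart from the ``$\ge$'' estimate above, all of this is routine bookkeeping with divisorial Zariski decompositions and standard descent.
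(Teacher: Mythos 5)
First, a point of comparison: the paper does not prove this statement at all. It is imported verbatim as \cite{lehmann10}, Theorem 6.1, and used as a black box (e.g., in the proof of Theorem \ref{abundantequality}), so there is no internal argument to measure your proposal against; you are reconstructing the content of a different paper. Judged on its own terms, your architecture (reduce everything to an additivity formula $\nu(\phi^{*}L) = \dim Z + \nu(\phi^{*}L|_{F})$ over the Iitaka fibration, then handle (2) by descent) is the right kind of approach, but two of its load-bearing steps are not established.

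The first is the inequality $\nu(f^{*}H + E) \geq \dim Z + \nu(F, E|_{F})$. You correctly identify the obstruction — the torsion-free sheaves $f_{*}\mathcal{O}_{Y}(tE + A)$ vary with $t$, so knowing only their generic rank does not let you conclude that twisting by $\mathcal{O}_{Z}(tH)$ contributes a factor of $t^{\dim Z}$ to $h^{0}$; one needs a uniform lower bound (some form of weak positivity or a uniform generic global generation statement) across the whole family. But you then defer this to ``Nakayama's analysis,'' and this deferral is not legitimate: this superadditivity over the Iitaka fibration is essentially the theorem itself, not a standard quotable fact. The second gap is the asserted compatibility $N_{\sigma}(\phi^{*}L)|_{F} = N_{\sigma}(\phi^{*}L|_{F})$ for a general fiber, which you list among ``standard properties'' and invoke in both $(2)\Rightarrow(3)$ and $(3)\Rightarrow(2)$. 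This is false in general: on $Y = C \times C$ with $C$ a general curve of genus at least $2$ and $f$ the first projection, the diagonal $\Delta$ satisfies $N_{\sigma}(\Delta) = \Delta$ (it is rigid with $\Delta^{2} < 0$), yet $\Delta|_{F}$ is a single point on the curve $F$, hence ample, so $N_{\sigma}(\Delta|_{F}) = 0$. In general one only has $N_{\sigma}(D|_{F}) \leq N_{\sigma}(D)|_{F}$, and the assertion that equality (or at least $\nu(N_{\sigma}(\phi^{*}L)|_{F}) = 0$) holds over the Iitaka fibration is precisely part of what must be proven, not an input. Lesser issues — ``easy addition'' for $\kappa_{\sigma}$, the descent of a movable class that is numerically trivial on fibers, and the promotion of $\equiv$ to $\sim_{\mathbb{Q}}$ — are genuine theorems requiring proof or precise citation, though they are at least true.
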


Note in particular that every big divisor is abundant.  Furthermore, it is clear that $L$ is abundant if and only if $P_{\sigma}(L)$ is abundant.  One consequence of abundance is the following equality:

\begin{prop}[\cite{lehmann10}, Proposition 6.4] \label{abundancevaluationequality}
Let $X$ be a smooth variety and let $L$ be an abundant divisor.  Then
\begin{equation*}
\sigma_{v}(L) = v(\Vert L \Vert)
\end{equation*}
for every divisorial valuation $v$.
\end{prop}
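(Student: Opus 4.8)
The plan is to prove the two inequalities $\sigma_v(L)\le v(\Vert L\Vert)$ and $\sigma_v(L)\ge v(\Vert L\Vert)$ separately, for each divisorial $v$. One direction is immediate from what is already set up: since $L$ is abundant we have $\kappa(L)\ge 0$, so by Lemma \ref{observation} the quantity $v(\Vert L+tA\Vert)$ is (strictly) decreasing in $t$, whence $\sigma_v(L)=\lim_{t\to 0}v(\Vert L+tA\Vert)\le v(\Vert L\Vert)$ for every $v\in\Val^{*}(X)$. So all of the content is in the reverse inequality $v(\Vert L\Vert)\le\sigma_v(L)$, and this is where abundance must enter.

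First I would reduce to a clean normal form. Both $v\mapsto v(\Vert L\Vert)$ and $v\mapsto\sigma_v(L)$ are intrinsic to $v$ and unchanged under passing to a birational model, and for divisorial $v$ they are compatible with the divisorial Zariski decomposition in the sense that $v(\Vert L\Vert)=v(N_\sigma(L))+v(\Vert P_\sigma(L)\Vert)$ and $\sigma_v(L)=v(N_\sigma(L))+\sigma_v(P_\sigma(L))$; these are standard properties of $N_\sigma$ from \cite{nakayama04} (the negative part is the ``asymptotically fixed'' part of $L$, and the identity for $v(\Vert-\Vert)$ then passes to the limit over ample perturbations to give the one for $\sigma_v$). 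Invoking characterization (2) of Theorem \ref{abundancedefn} and replacing $X$ by a suitable birational model, we may assume there is a morphism $f\colon X\to Z$ with connected fibers and a big divisor $B$ on $Z$ such that $P_\sigma(L)\sim_{\mathbb Q}f^{*}B$. Subtracting off the negative part using the two identities above, it suffices to prove
\[
v(\Vert f^{*}B\Vert)\le\sigma_v(f^{*}B)
\]
for every divisorial valuation $v$ on $X$ and every big divisor $B$ on $Z$.

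For the normal-form inequality I would write $v=\ord_\Gamma$ and compute case by case in terms of data on $Z$. After further blow-ups of $X$ and $Z$ we may assume $\Gamma$ is either $f$-horizontal or dominates a prime divisor $\Delta$ on $Z$. In the horizontal case $\ord_\Gamma(f^{*}D)=0$ for every divisor $D$ on $Z$, so $v(\Vert f^{*}B\Vert)=0$ and nothing is needed. In the vertical case, $H^{0}(X,mf^{*}B)=H^{0}(Z,mB)$ gives $v(\Vert f^{*}B\Vert)=m_\Gamma\,v_\Delta(\Vert B\Vert)=m_\Gamma\,\sigma_\Delta(B)$, where $m_\Gamma=\ord_\Gamma(f^{*}\Delta)$ and the last equality holds because $B$ is big. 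The heart of the matter is the matching lower bound $\sigma_\Gamma(f^{*}B)\ge m_\Gamma\sigma_\Delta(B)$: writing $B\sim_{\mathbb Q}\sigma_\Delta(B)\Delta+B''$ with $B''$ big and $\sigma_\Delta(B'')=0$, one gets $f^{*}B\sim_{\mathbb Q}m_\Gamma\sigma_\Delta(B)\Gamma+(\text{effective})+f^{*}B''$, and restricting members of $|m(f^{*}B+\epsilon A)|$ to $\Gamma$ one checks that $(f^{*}B+\epsilon A-c\Gamma)|_\Gamma$ fails to be pseudo-effective on $\Gamma$ for all $c<m(m_\Gamma\sigma_\Delta(B)-O(\epsilon))$, forcing that much vanishing along $\Gamma$; letting $\epsilon\to 0$ and using that non-pseudo-effectivity is an open condition on numerical classes gives the bound. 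The step I expect to be the main obstacle is exactly this last one: $\sigma_v$ is a limit of asymptotic valuations of genuinely bigger (big) divisors, so one must rule out a downward jump of the negative part as the ample perturbation shrinks to zero. The reason there is no jump — and the only place abundance is genuinely used — is that after the reduction everything is controlled by the divisorial Zariski decomposition of the \emph{big} divisor $B$ on the base $Z$, where $v_\Delta(\Vert-\Vert)$ varies continuously by Theorem \ref{bigconefunction}; transporting this continuity through the fibration $f$ and across the restriction to $\Gamma$ is the only delicate estimate in the argument.
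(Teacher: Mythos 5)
The paper does not actually prove this statement; it is imported verbatim from \cite{lehmann10} (Proposition 6.4), so there is no in-paper argument to compare against. Judged on its own terms, your reduction is sound and almost certainly parallels the cited proof: the inequality $\sigma_v(L)\le v(\Vert L\Vert)$ is indeed immediate from Lemma \ref{observation}, the splitting of both sides along the divisorial Zariski decomposition is legitimate (it amounts to $N_\sigma(\phi^{*}L)=\phi^{*}N_\sigma(L)+N_\sigma(\phi^{*}P_\sigma(L))$ and the fact that every member of $|mL|$ contains $mN_\sigma(L)$, both in \cite{nakayama04} III), and characterization (2) of Theorem \ref{abundancedefn} correctly reduces everything to $v(\Vert f^{*}B\Vert)\le\sigma_v(f^{*}B)$ for $B$ big on $Z$. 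The horizontal case and the computation $v(\Vert f^{*}B\Vert)=m_\Gamma\sigma_\Delta(B)$ in the vertical case are also fine.

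The genuine gap is the one step you flag as delicate, namely $\sigma_\Gamma(f^{*}B)\ge m_\Gamma\sigma_\Delta(B)$, and the mechanism you propose for it does not work. Writing $f^{*}\Delta=m_\Gamma\Gamma+R$ with $R\ge 0$ not containing $\Gamma$, one has
\begin{equation*}
(f^{*}B+\epsilon A-c\Gamma)|_\Gamma=(f|_\Gamma)^{*}\bigl((B-\tfrac{c}{m_\Gamma}\Delta)|_\Delta\bigr)+\tfrac{c}{m_\Gamma}R|_\Gamma+\epsilon A|_\Gamma,
\end{equation*}
a pullback from $\Delta$ plus an effective class plus an ample class; when $f^{-1}(\Delta)$ is reducible, or $\dim\Delta\ge 1$, or $\Delta$ lies in the augmented base locus of $B$, there is no reason for this to fail to be pseudo-effective, so non-pseudo-effectivity of the restriction cannot be the source of the forced vanishing (and ``restricting members of $|m(f^{*}B+\epsilon A)|$ to $\Gamma$'' is circular, since those members contain $\Gamma$). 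Relatedly, cutting by general hyperplanes and pushing cycles down to $Z$ only bounds a weighted sum of the multiplicities over all components of $f^{-1}(\Delta)$, not each one. A correct argument pushes \emph{sections} down to $Z$: use Nakayama's characterization $\sigma_\Gamma(D)=\lim_m\frac{1}{m}\ord_\Gamma\bigl|\lfloor mD\rfloor+A\bigr|$ for a fixed ample $A$, identify $H^{0}(X,mf^{*}B+A)$ with $H^{0}(Z,f_{*}\mathcal{O}_X(A)\otimes\mathcal{O}_Z(mB))$, embed the torsion-free sheaf $f_{*}\mathcal{O}_X(A)$ into a sum of line bundles $\mathcal{O}_Z(G)$, and use bigness of $B$ (continuity of $\sigma_\Delta$ at $B$, Theorem \ref{bigconefunction}) to force every such section to vanish to order at least $m\sigma_\Delta(B)-o(m)$ along $\Delta$, hence $m\,m_\Gamma\sigma_\Delta(B)-o(m)$ along $\Gamma$. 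This also corrects your closing remark: abundance enters not through a continuity transport but through the existence of the fibration itself, which is what lets the perturbed linear systems be controlled on the base where $B$ is big.
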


Conversely, we show that if $L$ is not abundant then there is some asymptotic divisorial valuation $v(\Vert L \Vert)$ that disagrees with $\sigma_{v}(L)$.  The key is to focus on components of the stable base locus of $L$ that dominate the base of the Iitaka fibration.  This idea has been used before, for example in \cite{takayama03}.

\begin{prop} \label{iitakafibrationdomination}
Let $X$ be a smooth variety and $L$ an $\mathbb{R}$-divisor with $\kappa(L) \geq 0$.  Let $\phi: Y \to X$ denote a resolution of the Iitaka fibration $f: Y \to Z$.  Choose an effective divisor $D \sim_{\mathbb{Q}} L$ and split $\phi^{*}D$ into horizontal and vertical components as
\begin{equation*}
\phi^{*}D = D_{hor} + D_{ver} = \sum a_{i}E_{i} + \sum b_{i}F_{i}.
\end{equation*}
For every horizontal component $E_{i}$ we have
\begin{equation*}
v_{\phi(E_{i})}(\Vert mL \Vert) = ma_{i}.
\end{equation*}
\end{prop}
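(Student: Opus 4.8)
The plan is to prove a slightly stronger statement: for every $p$ with $|pmL| \neq \emptyset$, \emph{every} divisor $C \in |pmL|$ satisfies $v_{\phi(E_i)}(C) = pm a_i$. Since $\kappa(L) \geq 0$ there are infinitely many such $p$, so the definition of the asymptotic valuation then yields $v_{\phi(E_i)}(\Vert mL \Vert) = \liminf_{p} \frac{1}{p}(pma_i) = ma_i$ at once. Recall also that for an effective divisor $C$ on $X$ one has $v_{\phi(E_i)}(C) = \ord_{E_i}(\phi^{*}C)$, so the claim is really a statement about the multiplicity of $E_i$ in the effective divisor $\phi^{*}C$ on $Y$.

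So fix such a $C$. Pulling back $C \sim pmL$ and $D \sim_{\mathbb{Q}} L$ to $Y$ and combining, one gets $\phi^{*}C \sim_{\mathbb{Q}} pm \phi^{*}D = pm\left(\sum a_i E_i + \sum b_i F_i\right)$. The idea is to read off the coefficient of $E_i$ by restricting to a fiber $F$ of $f : Y \to Z$ chosen so that: (i) $F$ lies over a point of $Z$ outside the images of the (finitely many) vertical prime divisors occurring in $\phi^{*}C$ or $\phi^{*}D$; (ii) $F$ is in general position relative to every prime component of $\phi^{*}C$ and of $\phi^{*}D$; and (iii) $F$ is very general, so that it carries a \emph{unique} effective divisor $\mathbb{Q}$-linearly equivalent to $\phi^{*}L|_{F}$ — this is available because the Iitaka fibration is characterized precisely by this property (see the discussion in Section~\ref{backgroundsection}), which in particular forces $\kappa(\phi^{*}L|_{F}) = 0$. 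Conditions (i)--(ii) hold for $F$ outside a proper closed subset of $Z$ depending on $C$, while (iii) excludes only a countable union of proper closed subsets, so such an $F$ exists.

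Now restrict. By (i) the vertical terms vanish, so $\phi^{*}D|_{F} = \sum a_i (E_i|_{F})$, a nonzero effective divisor on $F$ since each $E_i$ dominates $Z$; being effective and $\mathbb{Q}$-linearly equivalent to $\phi^{*}L|_{F}$, it is \emph{the} unique such divisor, and scaling by $pm$ shows $pm\sum a_i(E_i|_F)$ is the unique effective divisor $\mathbb{Q}$-linearly equivalent to $pm\phi^{*}L|_{F}$. Since $\phi^{*}C|_{F}$ is effective and $\mathbb{Q}$-linearly equivalent to $pm\phi^{*}L|_{F}$, we conclude $\phi^{*}C|_{F} = pm\sum a_i(E_i|_F)$. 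Finally, by (ii) the prime components of $E_i|_F$ are not components of $G|_F$ for any other prime $G$ in the support of $\phi^{*}C$ (as $E_i \cap G$ has codimension at least $2$ in $Y$ and $F$ is general), so the coefficient of $E_i$ in $\phi^{*}C$ equals the coefficient of $E_i|_F$ in $\phi^{*}C|_F$, which is $pma_i$. Hence $v_{\phi(E_i)}(C) = \ord_{E_i}(\phi^{*}C) = pma_i$, completing the argument.

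The only point where genuine care is needed is the last round of bookkeeping: matching a "very general" fiber (in the countable-intersection sense required for the Iitaka-fibration property) with the finitely many general-position conditions imposed by the fixed divisor $C$, and verifying that uniqueness of the effective representative survives the passage from $\phi^{*}L|_F$ to $pm\phi^{*}L|_F$. Neither is deep; conceptually the proposition simply records that the Iitaka fibration leaves no freedom in the fiber direction, so the horizontal part of any effective divisor $\mathbb{Q}$-linearly equivalent to a multiple of $L$ is completely rigid.
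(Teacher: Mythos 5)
Your proof is correct and follows the same route as the paper: restrict to a very general fiber $F$ of the Iitaka fibration and invoke the uniqueness of the effective divisor $\mathbb{Q}$-linearly equivalent to $\phi^{*}L|_{F}$ to pin down the horizontal coefficients. You simply spell out the general-position bookkeeping (avoiding vertical images and pairwise intersections of components) that the paper's two-line proof leaves implicit.
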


\begin{proof}
Let $F$ be a very general fiber of $f$ so that $\kappa(D|_{F}) = 0$.  If $D' \sim_{\mathbb{Q}} D$ is effective then $D'|_{F} = D|_{F} = \sum a_{i}E_{i}|_{F}$.  We immediately obtain $v_{E_{i}}(\Vert m\phi^{*}L \Vert) = ma_{i}$.
\end{proof}

\begin{proof}[Proof of Theorem \ref{abundantequality}:]
Suppose $L$ is abundant.  Choose a rational $\epsilon > 0$ sufficiently small so that $(1+\epsilon)L$ is weakly balanced.  We may also suppose that $\mathcal{J}_{\sigma}((1+\epsilon)L) = \mathcal{J}_{\sigma}(L)$ and $\mathcal{J}(\Vert (1+\epsilon) L \Vert) = \mathcal{J}(\Vert L \Vert)$.  Combining the valuative description of $\mathcal{J}_{\sigma}(L)$ for weakly balanced divisors in Theorem \ref{diminishedvaluation} (2) with Proposition \ref{abundancevaluationequality} we find
\begin{equation*}
\mathcal{J}_{\sigma}(L) = \mathcal{J}_{\sigma}((1+\epsilon)L) = \mathcal{J}(\Vert (1+\epsilon) L\Vert) = \mathcal{J}(\Vert L \Vert).
\end{equation*}

Conversely, as the $\kappa(L) = -\infty$ case is immediate, we restrict our attention to $\kappa(L) \geq 0$.  Let $\phi: Y \to X$ be a resolution of the Iitaka fibration $f: Y \to Z$.  We may assume for convenience that $L \geq 0$.  If $L$ is not abundant, then $P_{\sigma}(\phi^{*}L)$ is also not abundant, and characterization (3) of Theorem \ref{abundancedefn} applied to $P_{\sigma}(\phi^{*}L)$ implies that there is some component $E$ of $P_{\sigma}(\phi^{*}L)$ such that $f(E)=Z$.  In particular
\begin{equation*}
v_{E}(\Vert \phi^{*}L \Vert) > v_{E}(N_{\sigma}(\phi^{*}L)) = \sigma_{E}(\phi^{*}L)
\end{equation*}
by Proposition \ref{iitakafibrationdomination}.   Choose $m$ sufficiently large so that for some integer N we have
\begin{equation*}
mv_{E}(\Vert \phi^{*}L \Vert) > N + 1 > m\sigma_{E}(\phi^{*}L)+1.
\end{equation*}
Choose $\epsilon$ sufficiently small so that $(1+\epsilon)\phi^{*}L$ is weakly balanced and also $\epsilon m \sigma_{E}(\phi^{*}L) < 1$.  We have $I_{E}^{N} \subset \mathcal{J}_{\sigma}((1+\epsilon)m\phi^{*}L) \subset \mathcal{J}_{\sigma}(m\phi^{*}L)$ by Proposition \ref{zardecom}.  But $I_{E}^{N} \not \subset \mathcal{J}(\Vert m\phi^{*} L \Vert)$ by Theorem \ref{qvaluativedescription}.  Thus $\mathcal{J}(\Vert m\phi^{*}L \Vert) \subsetneq \mathcal{J}_{\sigma}(m\phi^{*}L)$.
\end{proof}

In fact, we have proven that Theorem \ref{abundantequality} can be tested on any resolution of the Iitaka fibration.

\section{Examples} \label{examplessection}

We conclude by mentioning several classes of variety for which $\mathcal{J}(h_{min})$ is determined by Theorems \ref{idealcomparison} and \ref{abundantequality}: Mori Dream Spaces,  minimal surfaces of Kodaira dimension $0$, and most minimal ruled surfaces.

When these theorems do not apply there is currently no geometric method for understanding $\mathcal{J}(h_{min})$.  As in \cite{dps01} Example 2.5, lifting theorems can occasionally yield useful information: since the analytic multiplier ideal controls the ability to lift sections, it is sometimes possible to give an upper bound for $\mathcal{J}(h_{min})$ by showing some sections do not lift.

\subsection{Mori Dream Spaces}

Suppose that $L$ is a pseudo-effective divisor on a Mori Dream Space $X$ (such as a smooth Fano or toric variety).  Then $L$ is abundant and has a Zariski decomposition.  Theorems \ref{mostlypsefcontainment} and \ref{abundantequality} show that $\mathcal{J}(h_{min}) = \mathcal{J}(\Vert L \Vert)$.

\subsection{Surfaces}

Every divisor on a surface has a Zariski decomposition.  Theorems \ref{mostlypsefcontainment} and \ref{abundantequality} yield

\begin{thrm} \label{surfacetheorem}
Let $S$ be a smooth surface and let $L$ be a pseudo-effective divisor on $S$.  Then
\begin{equation*}
\mathcal{J}(\Vert L \Vert) \subset \mathcal{J}(h_{min}) \subset \mathcal{J}_{\sigma}(L).
\end{equation*}
If $L$ is abundant then all three ideals coincide.
\end{thrm}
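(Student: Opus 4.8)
The plan is to deduce Theorem \ref{surfacetheorem} directly from the machinery already assembled, observing that the surface hypothesis makes every relevant divisor well-behaved. First I would record the key structural fact: on a smooth surface $S$, every pseudo-effective $\mathbb{R}$-divisor $L$ admits a Zariski decomposition in the classical sense of Zariski--Fujita, and this decomposition agrees with the divisorial Zariski decomposition $L = P_{\sigma}(L) + N_{\sigma}(L)$ of \cite{nakayama04}; in particular $P_{\sigma}(L)$ is already nef on $S$ itself, so $L$ has a Zariski decomposition in the sense of the Definition preceding Lemma \ref{observation} (take $\phi$ to be the identity). This is the only genuinely surface-specific input, and it is standard.

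Given that, the three-term containment is immediate. The left-hand containment $\mathcal{J}(\Vert L \Vert) \subset \mathcal{J}(h_{min})$ holds in complete generality: a divisor computing $\mathcal{J}(\Vert L \Vert)$ produces sections of multiples of $L$, hence a current dominating $T_{min}$ up to scaling, and $\mathcal{J}(h_{min})$ is the largest multiplier ideal among currents in the class. For the right-hand containment $\mathcal{J}(h_{min}) \subset \mathcal{J}_{\sigma}(L)$, I would simply invoke Theorem \ref{mostlypsefcontainment}(2): since $L$ has a Zariski decomposition, that theorem gives $\mathcal{J}(T_{min}) \subset \mathcal{J}_{\sigma}(L)$, and $\mathcal{J}(T_{min}) = \mathcal{J}(h_{min})$ for a Cartier divisor $L$ by Lemma \ref{numericallytrivialmultiplierideal}. (For a general $\mathbb{R}$-divisor one works with $T_{min}$ throughout.) No new argument is needed here; this is exactly the content advertised in the paragraph following Theorem \ref{mostlypsefcontainment}.

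For the abundant case, suppose in addition that $L$ is abundant. By Theorem \ref{abundantequality} applied with $\phi$ the identity and $m = 1$, abundance gives $\mathcal{J}(\Vert L \Vert) = \mathcal{J}_{\sigma}(L)$. Combined with the chain $\mathcal{J}(\Vert L \Vert) \subset \mathcal{J}(h_{min}) \subset \mathcal{J}_{\sigma}(L)$ already established, all three ideals must coincide. One small point worth checking is that Theorem \ref{abundantequality} is stated for divisors with $\kappa(L) \geq 0$, whereas abundance of $L$ with $\kappa(L) = -\infty$ would force $\nu(L) = -\infty$, i.e.\ $L \equiv 0$ is excluded; if $\kappa(L) = -\infty$ then $L$ is not pseudo-effective-with-sections and the statement is vacuous or trivial, so one may assume $\kappa(L) \geq 0$ without loss.

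I do not anticipate a serious obstacle: the theorem is essentially a corollary, and the only thing to get right is the bookkeeping between line bundles and $\mathbb{R}$-divisors, and between $h_{min}$ and $T_{min}$. The mild subtlety — the one place a careless proof could go wrong — is making sure the hypothesis ``$L$ has a Zariski decomposition'' is genuinely satisfied on a surface for \emph{every} pseudo-effective $\mathbb{R}$-divisor, including non-big ones on the pseudo-effective boundary; this follows because Zariski's construction (negativity of the intersection matrix of the negative part) works verbatim for $\mathbb{R}$-divisors on surfaces, and the resulting positive part is nef, matching $P_{\sigma}$. Once that is in hand, the proof is three lines.
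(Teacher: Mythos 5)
Your proposal is correct and is exactly the paper's argument: the paper deduces the theorem in one line by noting that every pseudo-effective divisor on a surface has a Zariski decomposition and then citing Theorems \ref{mostlypsefcontainment}(2) and \ref{abundantequality}, with the left-hand containment $\mathcal{J}(\Vert L \Vert) \subset \mathcal{J}(h_{min})$ taken as the standard fact that sections give lower bounds on $\mathcal{J}(h_{min})$. Your attention to the surface-specific input (nefness of $P_{\sigma}(L)$ on $S$ itself for $\mathbb{R}$-divisors) and to the $\kappa(L) \geq 0$ hypothesis in Theorem \ref{abundantequality} is the right bookkeeping, and nothing further is needed.
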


(One could also appeal to the multiplier ideal version of the Openness Conjecture for surfaces which was established in \cite{fj04}.)  We will use this result to analyze minimal surfaces with $\kappa(S) = -\infty,0$.

\subsubsection{Ruled surfaces}
Let $S$ be a minimal ruled surface.  The N\'eron-Severi space of $S$ is two dimensional.  One extremal ray of $\overline{NE}^{1}(S)$ is generated by a fiber of the ruling; we will let $L$ denote a divisor generating the other extremal ray.  Since every divisor besides $L$ is abundant, it only remains to consider the value of $\mathcal{J}(h_{min})$ for $L$ itself.  If $L$ is not nef, then $P_{\sigma}(L)$, and hence $L$, must be abundant.

The only interesting situation is when every pseudo-effective divisor is nef, or equivalently, when $\mathcal{E}$ is semistable.  Many of the remaining cases are settled by \cite{eckl04} 4.2.  The difficult case seems to be when $\mathcal{E}$ is strictly semistable but not split.

\subsubsection{Minimal surfaces with $\kappa(S) = 0$}
We next describe the possible behaviors of $\mathcal{J}(h_{min})$ on minimal surfaces of Kodaira dimension $0$.  For a pseudo-effective Cartier divisor $L$ on such a surface we have
\begin{equation*}
\mathcal{J}(h_{min}) = \mathcal{J}(\Vert L' \Vert)
\end{equation*}
where $L'$ is any effective divisor numerically equivalent to $L$.  Since $\mathcal{J}(h_{min})$ is a numerical invariant it suffices to prove the following:

\begin{lem}
Let $L$ be a Cartier divisor on a minimal surface $S$ with $\kappa(S) = 0$.  Then $L$ is numerically equivalent to an abundant divisor.
\end{lem}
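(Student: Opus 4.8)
The plan is to combine the Zariski decomposition of $L$ with the Enriques classification of minimal surfaces of Kodaira dimension $0$. For such an $S$ the canonical class $K_{S}$ is torsion, hence numerically trivial, and $S$ is a K3, Enriques, abelian, or bielliptic surface. Write the Zariski decomposition $L = P + N$, so that $P = P_{\sigma}(L)$ is a nef $\mathbb{Q}$-divisor, $N = N_{\sigma}(L) \geq 0$ has negative-definite support, and $\nu(L) = \nu(P)$. If $P^{2} > 0$ then $L$ is big, hence abundant. If $P \equiv 0$ then $L \equiv N$, and $N$ satisfies $\kappa(N) = \nu(N) = 0$, so $N$ is abundant. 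Thus it suffices to treat the case $P^{2} = 0$ with $P \not\equiv 0$, where $\nu(L) = 1$.

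In that case the crux is the following claim: a nef $\mathbb{Q}$-divisor $P$ on $S$ with $P^{2} = 0$ and $P \not\equiv 0$ is numerically equivalent to a divisor $Q$ with $\kappa(Q) = 1$. Granting this, set $L' := Q + N$. Then $L' \equiv L$, and $\kappa(L') \geq \kappa(Q) = 1$ because $N \geq 0$. If $L'$ is big it is abundant; otherwise $\kappa(L') \leq \nu(L') \leq 1$, forcing $\kappa(L') = \nu(L') = 1$, so $L'$ is again abundant. Either way $L$ is numerically equivalent to an abundant divisor.

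To prove the claim I would split into the four types. If $S$ is K3 or Enriques, Riemann--Roch gives $\chi(\mathcal{O}_{S}(mP)) = \chi(\mathcal{O}_{S}) > 0$ for $m$ making $mP$ integral, while $H^{2}(S,\mathcal{O}_{S}(mP)) = 0$ since $K_{S} - mP$ has negative degree against an ample divisor and so is not effective; hence $h^{0}(S,\mathcal{O}_{S}(mP)) > 0$, and a suitable multiple of $P$ is linearly equivalent to an effective nef divisor $D$ with $D^{2} = 0$, which by the classical theory of elliptic pencils satisfies $\kappa(D) = 1$ (using $|2D|$ in the Enriques half-pencil case). If $S$ is abelian, the homomorphism $\varphi_{P} \colon S \to \widehat{S}$ attached to $P$ has a one-dimensional kernel (positive-dimensional since $P$ is nef but not ample, and proper since $P \not\equiv 0$), giving a surjection $q \colon S \to E$ onto an elliptic curve whose fibre class $B$ satisfies $P \cdot B = 0 = B^{2}$; by the Hodge index theorem $P$ is then proportional to $B$, hence numerically a pullback of an ample class on $E$, with $\kappa = 1$. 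If $S$ is bielliptic, $\rho(S) = 2$ and the two extremal rays of the nef cone are the fibre classes of its two elliptic fibrations, so $P$ is proportional to one of them; alternatively one pulls $P$ back along the \'etale cover $E_{1} \times E_{2} \to S$ and invokes the abelian case.

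The main obstacle is exactly this claim: recognizing a rational nef class of self-intersection zero as, up to numerical equivalence, the pullback of an ample class under a fibration onto a curve. The rest --- the Zariski decomposition bookkeeping, the inequality $\kappa \leq \nu$, and the fact that a big surface divisor has $\kappa = 2$ --- is formal. I would expect the cleanest exposition either to argue the four classes separately as above or to reduce to the K3 and abelian cases by first passing to the \'etale cover of $S$ with trivial canonical bundle.
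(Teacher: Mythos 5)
Your proof is correct, but it takes a more hands-on route than the paper. The paper's argument is two lines: it observes that being numerically equivalent to an abundant divisor is preserved under pullback by finite maps, uses this to reduce to the K3 and abelian cases (eliminating Enriques and bielliptic surfaces via their canonical \'etale covers), quotes that pseudo-effective classes on K3 surfaces are effective and that on abelian surfaces they are algebraically equivalent to effective divisors, and then invokes the Abundance Theorem for surfaces wholesale. You instead run the Zariski decomposition $L = P_{\sigma}(L) + N_{\sigma}(L)$ explicitly, split into cases according to $\nu(L)$, and in the only nontrivial case ($P^{2}=0$, $P \not\equiv 0$) you essentially reprove the relevant piece of surface abundance: that a nontrivial nef isotropic class on each of the four Enriques types is, up to numerical equivalence, the pullback of an ample class under an elliptic fibration. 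The trade-off is clear: the paper's version is shorter and hides the classification inside quoted classical results (at the cost of also needing the descent of abundance along finite covers, which it asserts without proof), while yours is longer but self-contained modulo standard facts (Riemann--Roch and elliptic pencils on K3/Enriques surfaces, the theory of $\varphi_{P}$ and $K(P)^{0}$ on abelian surfaces, $\rho=2$ for bielliptic surfaces), and it makes visible exactly where the hypothesis $\kappa(S)=0$ (i.e.\ $K_{S}\equiv 0$) enters. Your bookkeeping with $L' = Q + N_{\sigma}(L)$, the inequality $\kappa \leq \nu$, and the numerical invariance of $\nu$ is all sound.
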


\begin{proof}
Suppose that $f: S' \to S$ is a finite map.  Then $L$ is numerically equivalent to an abundant divisor if and only if $f^{*}L$ is.  In this way we reduce to the case where $S$ is a K3 surface or an abelian surface.  We now refer to two classical facts:
\begin{enumerate}
\item A pseudo-effective divisor on a K3 surface has non-negative Iitaka dimension.
\item A pseudo-effective divisor on an abelian surface is algebraically equivalent to an effective divisor.
\end{enumerate}
We conclude by the Abundance Theorem for surfaces.
\end{proof}

\nocite{*}
\bibliographystyle{amsalpha}
\bibliography{minsing}

\end{document}